\newtheorem{theorem}{Theorem}[section]
\newtheorem{lemma}[theorem]{Lemma}
\newtheorem{proposition}[theorem]{Proposition}
\newtheorem{corollary}[theorem]{Corollary}
\newtheorem{conjecture}[theorem]{Conjecture}
\newtheorem{remark}{Remark}
\def \<{\langle}
\def \>{\rangle}
\def \a{\alpha }
\def \l{\lambda }
\newcommand{\bea}{\begin{eqnarray}}
\newcommand{\eea}{\end{eqnarray}}
\newcommand{\be}{\begin {equation}}
\newcommand{\ee}{\end{equation}}
\newcommand{\Z}{\Bbb Z}
\newcommand{\X}{\mathfrak{X}}
\newcommand{\Zp}{{\Bbb Z}_{>0} }
\newcommand{\N}{{ \mathbb Z}_{\ge 0} }
\newcommand{\C}{\Bbb C}
\newcommand{\WW}{\boldsymbol{ \mathcal{W}}}
\newcommand{\la}{\langle}
\newcommand{\ra}{\rangle}
\newcommand{\halmos}{\rule{1ex}{1.4ex}}
\newcommand{\epfv}{\hspace*{\fill}\mbox{$\halmos$}}
\newcommand{\nn}{\nonumber \\}
\begin{document}
\title[On W-algebras associated to $(2,p)$ minimal models and their representations] {On W-algebras associated to $(2,p)$ minimal models and their representations}
\author{Dra\v{z}en Adamovi\'c and Antun Milas}
\address{Department of Mathematics, University of Zagreb, Croatia}
\email{adamovic@math.hr}

\address{Department of Mathematics and Statistics,
University at Albany (SUNY), Albany, NY 12222}
\email{amilas@math.albany.edu}

 \markboth{Dra\v zen Adamovi\' c and Antun Milas} { }
\bibliographystyle{amsalpha}

\begin{abstract}
For every odd $p \geq 3$, we investigate representation theory of the vertex algebra $\WW_{2,p}$
associated to $(2,p)$ minimal models for the Virasoro algebras. We
demonstrate that vertex algebras $\WW_{2,p}$ are $C_2$--cofinite and
irrational. Complete classification of irreducible representations for
$\WW_{2,3}$ is obtained, while the classification for $p \geq 5$ is subject to certain constant
term identities. These identities can be viewed as "logarithmic deformations" of Dyson
and Selberg constant term identities, and are of independent interest.

\end{abstract}
\maketitle

\tableofcontents

\section{Introduction}

The Virasoro algebra modules of central charge zero play a prominent rule in conformal field theory (CFT). Most recently they also appeared in the physics literature on
logarithmic CFT (cf.  \cite{GL}, \cite{FGST-log}, \cite{EF}, \cite{GRW}, \cite{MR}, etc.).
As with the ordinary CFTs,  it is desirable to understand constructions in logarithmic CFT  via representations of vertex algebras. While this can be done is some cases,
there are examples of central charge zero LCFT which are still poorly understood, at least from algebraic point of view.

In an important paper \cite{FGST-log} (cf. also \cite{FGST-qg}), certain vertex $\mathcal{W}$-algebras, denoted by $\WW_{q,p}$, associated to central charge
$c_{q,p}=1-\frac{6(p-q)^2}{pq}$ Virasoro minimal models were introduced (here $p$ and $q$ are co-prime integers). These vertex algebras are extracted from the Felder's
complex via a pair of screening operators. Similar construction, but with one screening operator, has also been used in the definition of the triplet vertex algebra
$\mathcal{W}(p)$ \cite{AdM-triplet}, a certain extension of the Virasoro vertex algebra $L(c_{1,p},0)$. Compared to the triplet $\mathcal{W}(p)$, the vertex algebra
$\WW_{q,p}$ is more difficult to study due to more complicated structure of Virasoro modules and screening operators. Another sharp contrast
is that $\WW_{q,p}$ is no longer  simple nor self-dual and in fact  combines into a nontrivial extension of the Virasoro vertex algebra $L(c_{q,p},0)$ (the latter is
known to be rational \cite{W}). More precisely,
$$0 \longrightarrow \mathcal{R}_{q,p} \longrightarrow \WW_{q,p} \longrightarrow L^{Vir}(c_{q,p},0) \longrightarrow 0,$$
where $\mathcal{R}_{q,p}$ is  the maximal vertex ideal in $\WW_{q,p}$. The reader should notice that for  $q=2$, $p=3$ (i.e. $c=0$) the above sequence is non-trivial,
even though
the vertex algebra $L^{Vir}(0,0) \cong \mathbb{C}$ is trivial.

While the paper \cite{FGST-log} does provide expected  "spectrum" and "generalized characters"
of $\WW_{q,p}$, several results in \cite{FGST-log} rely on a
conjectural correspondence between the category of  $\WW_{q,p}$-modules and the category
of modules of a certain quantum group. Thus, in parallel with the triplet algebra \cite{AdM-triplet}, it is
not clear how to:  (i)  obtain a "strong" set of generators of
$\WW_{q,p}$, (ii) prove the  $C_2$-cofiniteness of $\WW_{q,p}$ and
(iii) classify irreducible $\WW_{q,p}$-modules, and finally (iv)
construct projective covers. Some partial results in this
direction were previously obtained in \cite{FGST-log} for all $q$ and $p$, and in \cite{GRW}
for $c=0$. In addition, in \cite{AdM-2009} we found an explicit
construction of some (but not all!) logarithmic $\WW_{q,p}$-modules.

Arguably, the simplest  minimal models occur within the series $(2,p)$, $ p \geq 3$ odd. These modules have
been linked to classical partition identities (cf. \cite{FF}) and their combinatorics and characters are somewhat less complicated compared to the unitary series
$(k,k+1)$.
So, at least from this point of view, it seems natural to contemplate $\WW_{2,p}$ algebras and their representations first.

This paper is meant to provide a very detailed vertex algebraic study of $\mathcal{W}$-algebras $\WW_{2,p}$,
 following our approach in \cite{AdM-triplet}, but supplied
with many new ideas. In parallel with \cite{AdM-triplet}, we study $\WW_{2,p}$  as a subalgebra of a rank one lattice vertex algebra $V_L$ such that the conformal vector
$\omega$ has central charge $c_{2,p}$ (cf. Section \ref{konstrukcija}).  Although  we work in full generality of $\WW_{2,p}$ vertex algebras, our original motivation
was to understand the algebra $\WW_{2,3}$ (cf. \cite{GRW}, \cite{EF}, \cite{FGST-log}, etc).
A central aim of this work is to study the $C_2$-cofiniteness of the vertex operator algebra $\WW_{2,p}$. This is an important problem
because $C_2$-cofiniteness implies that $\WW_{2,p}$ admits only
finitely many irreducible representations. That is our first result
\begin{theorem} \label{c2-uvod}
For any $p \geq 3$, the vertex operator algebra $\WW_{2,p}$  is $C_2$-cofinite.
\end{theorem}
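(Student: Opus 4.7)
The plan is to adapt the strategy developed in \cite{AdM-triplet} for the triplet algebra $\mathcal{W}(p)$ to the more delicate, non-simple setting of $\WW_{2,p}$, working throughout inside the rank-one lattice vertex algebra $V_L$ introduced in Section \ref{konstrukcija}. The target is the Zhu Poisson algebra $R(\WW_{2,p}) = \WW_{2,p}/C_2(\WW_{2,p})$, which I want to show is finite-dimensional.

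First I would exhibit an explicit finite set of strong generators of $\WW_{2,p}$. From the free-field realization and the structure of the pair of screening operators, the natural candidates are the conformal vector $\omega$ together with a small number of primary vectors of a specific conformal weight $w(p)$, arising as (iterated) screenings of lattice vertex operators $e^{k\alpha}$ for $k$ in a finite range. Writing $\WW_{2,p}$ as a direct sum of finitely many Heisenberg Fock components, one reads off candidate generators from the lowest-weight vectors in each component, in parallel with the $\omega,F,H,E$ generators in the triplet case.

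Next I would use the short exact sequence
$$0 \longrightarrow \mathcal{R}_{2,p} \longrightarrow \WW_{2,p} \longrightarrow L^{Vir}(c_{2,p},0) \longrightarrow 0$$
together with the $C_2$-cofiniteness of the rational Virasoro minimal model $L^{Vir}(c_{2,p},0)$ due to Wang \cite{W}. This immediately produces a polynomial $P(\omega)$ lying in $\mathcal{R}_{2,p} + C_2(\WW_{2,p})$; lifting yields a representative of $\mathcal{R}_{2,p}$ whose class modulo $C_2$ is a nonzero polynomial in $\omega$, so the $\omega$-direction of $R(\WW_{2,p})$ is controlled at the outset.

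The heart of the proof is then to show that the ideal $\mathcal{R}_{2,p}$ contributes only finitely many linearly independent classes to $R(\WW_{2,p})$. Here I would write the weight-$w(p)$ generators as explicit elements of $V_L$ and use OPEs of lattice vertex operators, combined with Heisenberg commutator formulas, to produce polynomial identities in $R(\WW_{2,p})$ expressing products of two generators as polynomials in $\omega$ times a single generator (in parallel with the relation $E^2 \equiv F\cdot f(\omega)$ from the triplet case). A useful organizing tool is the charge filtration of $V_L$, whose associated graded is a sum of Fock modules; classical null-vector identities in Fock space pass to the associated graded of $R(\WW_{2,p})$ and provide the missing relations.

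Combining these ingredients, $R(\WW_{2,p})$ becomes a finite-dimensional quotient of a finitely generated polynomial ring, which is precisely $C_2$-cofiniteness. The main obstacle is the middle step: finding the explicit polynomial relations among screened vertex operators in the Zhu Poisson algebra is computationally intricate, and one must carefully track the non-semisimple extension data encoded in $\mathcal{R}_{2,p}$ — exactly the feature that distinguishes $\WW_{2,p}$ from the simpler triplet $\mathcal{W}(p)$ and from the underlying rational minimal model $L^{Vir}(c_{2,p},0)$.
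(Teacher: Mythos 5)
Your general framework --- realizing $\WW_{2,p}$ inside $V_L$, taking the strong generators $\omega, E, F, H$ (screened lattice vectors of weight $6p-3$), and hunting for polynomial relations in the Poisson algebra $\WW_{2,p}/C_2(\WW_{2,p})$ --- is the same as the paper's. But the proposal has a genuine gap at exactly the step you flag as "the heart": nothing you write down actually forces $\bar{\omega}$ to be nilpotent. Your exact-sequence step only shows that some power of $\bar{\omega}$ lands in the image of $\mathcal{R}_{2,p}$; it does not kill it, and your proposed relations of the shape $\bar{E}^2 \equiv \bar{F}\cdot f(\bar{\omega})$ (or null-vector identities in the associated graded Fock modules) cannot close the loop, because they never bound the powers $\bar{\omega}^k\bar{E}$, $\bar{\omega}^k\bar{F}$, $\bar{\omega}^k\bar{H}$. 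The reduction to $\mathcal{R}_{2,p}$ just relocates the problem without solving it.

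The paper closes the loop with two specific inputs you are missing. First, the cheap relations are not of the form you guess: from $Q^2=0$ one gets $F_{-1}F=0$ on the nose, and applying the derivation $G$ (Theorem \ref{G-der-general}) gives $E_{-1}E=0$ and $E_{-1}F+F_{-1}E+2H_{-1}H=0$, hence $\bar{E}^2=\bar{F}^2=0$, $\bar{H}^2=-\bar{E}\bar{F}$, and so $\bar{H}^4=0$ (Lemma \ref{relacije}). Second --- and this is the real crux --- the singlet subalgebra $\overline{M(1)}$ supplies the relation $\bar{H}^2=C\,\bar{\omega}^{6p-3}$ with $C\neq 0$ (Proposition \ref{korisna-pr} and Theorem \ref{zhu-singlet-2-p}), whence $\bar{\omega}^{12p-6}=0$ and every generator is nilpotent. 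Proving that the constant $C$ is \emph{nonzero} is not a routine OPE computation: it is the evaluation $H(0)v_\lambda = A_p\binom{t+p}{2p-1}\binom{t}{2p-1}\binom{t+p/2}{2p-1}v_\lambda$ with $A_p\neq 0$ (Theorem \ref{CT}), a logarithmically deformed Dyson-type constant term identity occupying the appendix. Without identifying and establishing a nonvanishing of this kind, your argument does not yield finite-dimensionality; the appeal to Wang's rationality of $L^{Vir}(c_{2,p},0)$, which the paper does not use here, is no substitute for it.
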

We should mention that for $p=3$, Theorem \ref{c2-uvod} was (at least implicitly) conjectured in \cite{GRW}.  The $C_2$-cofiniteness is also important
because it allows us to use powerful results from \cite{HLZ} and \cite{Hu1}. In particular,  results there
endow the category of $\WW_{2,p}$-modules with a natural braided vertex tensor category structure.
The fusion rules for
this category are of considerable interest in the physics literature (cf. \cite{GRW}, \cite{Wo}).

A few words about the proof of Theorem \ref{c2-uvod}. As in \cite{A-2003}, \cite{AdM-2007} (see also \cite{AdM-striplet})  we  study the singlet vertex algebra
$\overline{M(1)}$ strongly generated by two generators and realized as a subalgebra of $\WW_{2,p}$. In this way we realize  a new  family of $\mathcal{W}$-algebras with
two generators. We  also completely determine  the associated Zhu's algebras. It turns out that the structural results on  singlet vertex algebra $\overline{M(1)}$
provide an  important step in the proof of Theorem \ref{c2-uvod}.

Having established the $C_2$-cofiniteness we can now go on and explore irreducible $\WW_{2,p}$-modules.
But to achieve this in full generality we have to overcome some difficulties of combinatorial nature (see Section \ref{klasifikacija-gen}).
First we only state and prove classification theorem for $p=3$.
\begin{theorem}
The vertex operator algebra $\WW_{2,3}$ has precisely $13$ irreducible inequivalent modules (explicitly described in the paper). Each irreducible module
is a submodule  or a subquotient of an irreducible module for the rank one lattice vertex algebra $V_L$ of central charge zero.
\end{theorem}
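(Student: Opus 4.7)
The plan is to combine the $C_2$-cofiniteness established in Theorem~\ref{c2-uvod} with an explicit realization inside the rank one lattice vertex algebra $V_L$, and then reconcile the two via Zhu's algebra.

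First I would exhibit $13$ inequivalent irreducible $\WW_{2,3}$-modules as subquotients of irreducible $V_L$-modules. The isomorphism classes of irreducible $V_L$-modules are parameterized by the finite discriminant group $L^*/L$ for the lattice fixed in Section~\ref{konstrukcija} so that the conformal vector has central charge $c_{2,3}=0$. Each irreducible $V_L$-module, restricted along the embedding $\WW_{2,3}\subset V_L$, is in general reducible as a $\WW_{2,3}$-module, and I would decompose it by tracking kernels, images, and cokernels of the pair of screening operators whose joint kernel defines $\WW_{2,3}$. This parallels the strategy developed for the triplet algebra in \cite{AdM-triplet} and for singlet subalgebras in \cite{A-2003, AdM-2007}. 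Careful bookkeeping of the subquotients associated to each $L^*/L$ coset, separating ``Virasoro singular'' from ``subsingular'' pieces with respect to the screenings, should yield exactly $13$ irreducible $\WW_{2,3}$-modules, distinguished by their lowest conformal weights together with additional Zhu-algebra data.

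Next I would show that no further irreducible $\WW_{2,3}$-module exists. By Zhu's theorem the irreducible $\WW_{2,3}$-modules correspond bijectively to simple modules over the Zhu algebra $A(\WW_{2,3})$, which is finite-dimensional by Theorem~\ref{c2-uvod}. Starting from the presentation of the Zhu algebra of the singlet subalgebra $\overline{M(1)}$ computed in the earlier sections (the $\overline{M(1)}$ analysis is exactly what the introduction advertises as a key ingredient), I would lift to a presentation of $A(\WW_{2,3})$ by adjoining the extra strong generators of $\WW_{2,3}$, together with the relations coming from Virasoro singular vectors in $V_L$ and from additional screening-invariant null vectors. The goal is to show that these relations force the spectrum of $A(\WW_{2,3})$ to have at most $13$ points, matching the lower bound from the construction.

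The main obstacle is the Zhu-algebra computation in the second step: identifying the additional defining relations of $A(\WW_{2,3})$ beyond those inherited from $A(\overline{M(1)})$, and proving they are exhaustive. For $p=3$ the relevant polynomial identities can be verified by hand because the central charge is zero and the Virasoro quotient $L^{Vir}(0,0)\cong\mathbb{C}$ is trivial, collapsing many potential obstructions; this is precisely why the general case $p\ge 5$ is deferred in the paper, where the analogous relations become the ``logarithmic'' Dyson/Selberg constant term identities mentioned in the abstract. A subsidiary but necessary check is that the $13$ constructed candidates are pairwise non-isomorphic, which follows by separating them with the eigenvalues of $L(0)$ and of the zero modes of the higher strong generators.
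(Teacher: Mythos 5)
Your overall strategy coincides with the paper's: realize the irreducibles inside lattice modules and cap the count with Zhu's algebra. However, as written the plan has three concrete gaps. First, the irreducible \emph{untwisted} $V_L$-modules parameterized by $L^*/L$ account for only part of the list: since $\la \alpha,\alpha\ra=3$ is odd, $V_L$ is a vertex superalgebra and six of the thirteen modules (those with lowest weights $-\tfrac{1}{24},\tfrac18,\tfrac58,\tfrac{35}{24},\tfrac{21}{8},\tfrac{33}{8}$) live inside the $\sigma$-twisted modules $V_{L+(2\ell+1)\alpha/6}$, not inside cosets of $L^*/L$; your bookkeeping would produce only the seven modules supported on $V_{L+j\alpha/3}$ and their quotients. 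Second, the upper bound is stated as a ``goal'' without its mechanism. In the paper the whole bound comes from a single relation $f([\omega])=0$ in $A(\WW_{2,3})$, where $f$ is an explicit degree-$20$ polynomial with exactly $13$ distinct roots, obtained because $G^2Qe^{-5\alpha}\in O(\WW_{2,3})$ and its zero mode acts on lowest weight vectors by the polynomial $\widetilde{G_p}(t)$ of Proposition \ref{evaluation}. Evaluating that polynomial \emph{is} the logarithmic constant-term identity (Conjecture \ref{conj-g}); your claim that for $p=3$ these identities are sidestepped ``because $L^{Vir}(0,0)\cong\C$ is trivial'' is incorrect --- the triviality of the Virasoro quotient plays no role, and the identity is needed for $p=3$ exactly as for general $p$ (the paper verifies it there by direct calculation, which is the only reason $p=3$ is unconditional while $p\geq 5$ is not).

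Third, even granting that $f([\omega])=0$ restricts the $L(0)$-eigenvalue on a simple $A(\WW_{2,3})$-module to the $13$-element set $S_{2,3}$, this does not by itself bound the number of simples by $13$: $A(\WW_{2,3})$ is noncommutative, and a single eigenvalue of $[\omega]$ could a priori support several inequivalent simples, or simples of dimension larger than two. The paper closes this with Proposition \ref{zhu-komut-2-p}: the generators $[E],[F],[H]$ satisfy $\mathfrak{sl}_2$-type commutation relations deformed by a polynomial $q([\omega])$, together with the nilpotency coming from $E_{-1}E=F_{-1}F=0$; when $q(h)\neq 0$ one rescales to a genuine $\mathfrak{sl}_2$-triple with $e^2=f^2=0$, forcing the top component to be the one- or two-dimensional module already constructed, and when $q(h)=0$ the generators commute and act trivially. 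This uniqueness-per-eigenvalue argument is absent from your proposal and is the step that actually turns ``at most $13$ eigenvalues'' into ``at most $13$ simple modules.''
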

The classification of modules for $\WW_{2,p}$ is subject to a constant term identity closely related to classical Dyson-Selberg's identities \cite{An}, but with
important modifications due to logarithmic factors.  We have
\begin{theorem} \label{class-intro}
If Conjecture \ref{conj-g}  holds, the vertex algebra $\WW_{2,p}$ has precisely $4p+\frac{p-1}{2}$ irreducible modules.
\end{theorem}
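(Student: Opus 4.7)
The plan is to combine an explicit construction of irreducible modules (providing a lower bound) with a Zhu-algebra argument (providing the matching upper bound), where the constant term identity in Conjecture \ref{conj-g} enters exactly in the second step. Since Theorem \ref{c2-uvod} ensures that $\WW_{2,p}$ is $C_2$-cofinite, there are only finitely many inequivalent irreducibles, and by Zhu's theory they are in bijection with simple modules of the finite-dimensional associative algebra $A(\WW_{2,p})$. So the problem is really to pin down $A(\WW_{2,p})$ up to semisimple quotient.

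First I would construct $4p+\frac{p-1}{2}$ candidate irreducibles as submodules or subquotients of irreducible modules of the lattice vertex algebra $V_L$ together with its $\Z/2$-twisted sector. This parallels the construction for the triplet algebra but is genuinely richer: one gets a family of $4p$ ordinary modules that descend from $V_L$-modules (doubled compared with the triplet case because of the extra screening in the $(2,p)$ setting and because $\WW_{2,p}$ is not self-dual, so modules and their contragredients, or modules appearing in different positions of the extension $0\to \R_{2,p}\to\WW_{2,p}\to L^{Vir}(c_{2,p},0)\to 0$, give distinct isomorphism classes), together with $\frac{p-1}{2}$ additional irreducibles arising as subquotients of generic lattice modules indexed by the $(2,p)$ Kac-table rows. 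Pairwise inequivalence is verified by reading off the lowest conformal weight and the action of the zero mode of the generator of $\WW_{2,p}$ on the top component; these data are inherited from the ambient $V_L$-module.

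Next I would establish the upper bound via Zhu's algebra. The key input is the structural description of the singlet subalgebra $\overline{M(1)}\subset \WW_{2,p}$ obtained earlier in the paper: $\overline{M(1)}$ is strongly generated by two elements, and its Zhu's algebra has been determined. Combined with the $C_2$-cofiniteness, any simple $A(\WW_{2,p})$-module pulls back to a simple $A(\overline{M(1)})$-module satisfying an additional family of polynomial relations that come from singular vectors in $\WW_{2,p}$ (these are the vectors annihilated by the long screening). Writing down these relations amounts to evaluating the zero mode of the singular vector acting on a highest weight vector of weight $h$, and by the standard contour manipulation for screening operators this zero mode equals a constant term in a Laurent expansion of a product of the form $\prod(x_i-x_j)^{2\alpha_+\alpha_+}\prod x_i^{2\alpha h}$ times a rational correction built out of logarithms.

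The main obstacle — and the reason the theorem is conditional on Conjecture \ref{conj-g} — is evaluating precisely this constant term. The conjecture provides the analogue of the Dyson/Selberg constant term identity adapted to the logarithmic factors that appear because $\WW_{2,p}$ is a logarithmic extension. Once the identity is granted, it gives the exact polynomial equation in $h$ (and in the zero mode of the second generator) whose roots parametrize the admissible weights; counting solutions shows $A(\WW_{2,p})$ has at most $4p+\frac{p-1}{2}$ simple modules, matching the lower bound from the construction. The remaining work is bookkeeping: matching each root of the constant-term polynomial with one of the explicitly constructed modules by comparing their top weights and verifying no degenerate root is double-counted.
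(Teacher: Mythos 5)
Your proposal follows essentially the same route as the paper: the lower bound comes from realizing the $4p$ modules $W_p(h)$ plus the $\frac{p-1}{2}$ minimal models $L^{Vir}(c_{2,p},h_{1,i})$ as submodules or subquotients of (twisted and untwisted) lattice modules, and the upper bound comes from the relation $f_p([\omega])=0$ in $A(\WW_{2,p})$ obtained by evaluating the zero mode of the singular vector $G^2Qe^{-5\alpha}\in O(\WW_{2,p})$ on top components, which is exactly where the constant term identity of Conjecture \ref{conj-g} enters. The one step you leave implicit --- as does the paper, which defers to the $p=3$ argument --- is that passing from the distinct roots of $f_p$ to the count of simple $A(\WW_{2,p})$-modules uses the $\mathfrak{sl}_2$-type commutation relations among $[E]$, $[F]$, $[H]$ to show each admissible lowest weight supports exactly one simple Zhu-algebra module.
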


As with the triplet vertex algebra it is important to explore the space of generalized characters (cf. \cite{Miy}) for purposes of  modular invariance.
We have a partial result in this direction.
\begin{theorem}
The space of generalized $\WW_{2,p}$-characters is $20$-dimensional for $p=3$. For $p \geq 5$, this space is conjecturally of dimension $\frac{15p-5}{2}$.
\end{theorem}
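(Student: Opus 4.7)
The plan is to apply Miyamoto's theorem \cite{Miy}, which for any $C_2$-cofinite vertex operator algebra identifies the space of generalized one-point torus conformal blocks with
\[
\bigoplus_{P} \mathrm{SLF}(\mathrm{End}(P)),
\]
the sum running over a complete list of indecomposable projectives $P$, where $\mathrm{SLF}$ denotes the space of symmetric linear functionals. Theorem \ref{c2-uvod} places $\WW_{2,p}$ in the scope of this result, so the task reduces to computing the Loewy structures of the projective covers of the irreducible modules.

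For $p=3$, the preceding classification theorem supplies $13$ irreducibles, hence $13$ ordinary characters of the form $\mathrm{tr}_M\,q^{L(0)-c/24}$. The remaining seven dimensions must arise as pseudotraces on indecomposable projectives on which $L(0)$ acts with nontrivial nilpotent part. I would construct these projective covers inside $V_L$ by taking suitable extensions of Feigin-Fuchs modules and quotienting by the action of screening operators, following the pattern developed for the singlet algebra $\overline{M(1)}$ earlier in this paper and for the triplet algebra $\W(p)$ in \cite{AdM-triplet}. After organising the module category into blocks, a direct enumeration of Jordan-block data of $L(0)$ on the top composition factors of each projective is expected to yield exactly $13+7=20$.

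For $p \geq 5$ the same method applies conditionally on Conjecture \ref{conj-g}, which provides $\tfrac{9p-1}{2}$ irreducibles. Guided by the $p=3$ pattern and the triplet algebra analogy, one expects $3p-2$ atypical length-two blocks, each contributing one additional pseudo-character, for a total of $\tfrac{9p-1}{2} + (3p-2) = \tfrac{15p-5}{2}$. The principal obstacle is not Miyamoto's machinery itself but the explicit determination of projective covers: for $p=3$ this is tractable inside the lattice vertex algebra model via careful screening-operator bookkeeping together with the Zhu algebra computations already established for $\overline{M(1)}$, whereas for $p \geq 5$ it couples tightly with the constant term identities underlying Conjecture \ref{conj-g} and therefore remains conjectural.
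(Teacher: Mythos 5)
Your proposal does not follow the paper's route, and as written it contains a genuine gap: it is a program rather than a proof. The decisive step --- ``a direct enumeration of Jordan-block data of $L(0)$ on the projective covers is expected to yield exactly $13+7=20$'' --- is precisely the content that would need to be proved, and it rests on an explicit determination of the projective covers of the irreducible $\WW_{2,3}$-modules. The paper itself lists the construction of projective covers as an \emph{open} problem (point (iv) of the introduction), and nothing in the text supplies the Loewy structures you would need. Moreover, the identification of the space of generalized torus one-point functions with $\bigoplus_P \mathrm{SLF}(\mathrm{End}(P))$ is not what \cite{Miy} proves; Miyamoto's theorem gives modular invariance of the space spanned by pseudotraces under $C_2$-cofiniteness, while the symmetric-linear-functional description of its dimension is a substantially stronger statement that would itself require justification. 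So both pillars of your argument (the categorical dimension formula and the input data for it) are missing.

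The paper's actual argument is a two-sided squeeze that avoids projectives entirely. For the upper bound, Proposition \ref{korisna-pr} and Theorem \ref{zhu1} yield the relation $\overline{\omega}^{20}=0$ in the Poisson algebra $\WW_{2,3}/C_2(\WW_{2,3})$ (Proposition \ref{poisson}); by the results of \cite{AdM-2009-3} this forces every generalized character to satisfy a modular differential equation of order $20$, so the space has dimension at most $20$. For the lower bound, the space of generalized characters is $SL(2,\mathbb{Z})$-invariant (this is where \cite{Miy} enters) and contains the $13$ irreducible characters, whose $SL(2,\mathbb{Z})$-closure is already $20$-dimensional by \cite{FGST-log} (Lemma \ref{slclosure}). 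The two bounds coincide, giving exactly $20$. If you want to salvage your approach, you would have to either construct the projectives explicitly (hard, and not done here) or replace that input with the nilpotency degree of $\bar\omega$ in the $C_2$-algebra, which is exactly what the paper does.
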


There are several directions we plan to pursue at this stage.
Many ideas and arguments presented here certainly generalize to other $\WW_{q,p}$-algebras and possibly to higher $\mathcal{W}$-algebras \cite{Ar}. But even before diving
deeper into $\WW_{q,p}$
algebras, it would be desirable to construct indecomposable $\WW_{2,p}$ algebras. For instance, the methods in \cite{AdM-2009}, \cite{FFHST} and this paper can be
employed to study logarithmic $\WW_{2,p}$-modules. Another direction is to explore possible connections between our construction and generalized twisted modules
introduced
recently in \cite{Hu2}.

\vskip 2mm

\noindent {\bf Acknowledgments:}
We would like to thank Yi-Zhi Huang for useful discussion.

\section{A construction of derivations of vertex superalgebras}
\label{der-construction}

In this section we shall present a new
construction of derivation on a vertex subalgebra realized as the
kernel of a  screening operator. This construction will be
applied to lattice vertex algebras. Further applications will be
studied in our future work.

\vskip 5mm

We assume some familiarity with vertex operator algebras and superalgebras. We refer the reader to
standard textbooks on this subject (cf. \cite{LL} for instance).
Assume that $(V,Y,{\bf 1}, \omega)$ is a $
\mathbb{Z}$--graded vertex operator superalgebra
with
$V=V_0 \oplus V_1$ (parity decomposition)
and the vertex
operator map $Y( \cdot, x)$, where $Y(u,x)=\sum_{n \in \mathbb{Z}}
u_n x^{-n-1}$ and $Y(\omega,x)=\sum_{m \in \mathbb{Z}}
L(m)x^{-m-2}$. Let $a \in V$ be odd vector  such that
\bea
&& \{ a_n, a_m \} =  a_n a _m + a_m a _n =0 \quad \forall n,m \in {\Z}, \label{rel-c-1} \\
&& L(n) a = \delta_{n,0} a \quad \forall \ n \in {\N},
\label{rel-c-2}
\eea
so that $a$ is of conformal weight one. Then $$[a_0, L(n)]=0,$$
i.e., $a_0$  is a screening operator. Moreover, $$\overline{V} =
\mbox{Ker} _V a _0$$ is a vertex subalgebra of $V$. We shall now
construct a derivation on $\overline{V}$. Recall that an (even) derivation on a superalgebra  $V$ is
a linear map $D: V \rightarrow V$, such that
$$D(a_n b)=(Da)_n b+ a_n(Db),$$
for all $a,b \in V$ and $n \in \mathbb{Z}$. Define the following operator
$$ G = \sum_{i = 1 } ^{\infty} \frac{1}{i} a _{-i} a
_i.$$
Since $G$ commutes with the action of screening operator $a_0$, we
have that $G$ is a well-defined operator on $\overline{V}$.
\begin{theorem} \label{G-der-general}
On the vertex superalgebra  $\overline{V}$ we have:
\item[(i)] $[L(n), G] = 0$ for every $n \in {\Z}$, i.e, $G$ is a
screening operator which commutes with the action of the Virasoro
algebra.

\item[(ii)] The operator $G$ is a derivation on the vertex
algebra $\overline{V}$, and in particular, $\exp [  G]$ is an
automorphism of $\overline{V}$.

\item[(iii)] Assume that $(W,Y_W)$ is any $V$--module and $ v \in \overline{V}$. Then on the submodule $\mbox{Ker}_{W}
a_0 $ we have
$$ Y_W ( G v, z) = [G, Y_W (v,z)].$$
\end{theorem}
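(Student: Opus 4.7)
The plan is to establish part~(iii) as the main technical step and then to derive part~(ii) from it, while part~(i) will be handled by a direct mode computation.

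For (i), I would use that $a$ is primary of conformal weight~$1$, so the standard formula gives $[L(n), a_k] = -k\, a_{n+k}$ for all $n,k \in \Z$. Inserting this into $G = \sum_{i \geq 1} \tfrac{1}{i} a_{-i} a_i$ produces
$$[L(n), G] = \sum_{i=1}^{\infty} \bigl(a_{n-i} a_i - a_{-i} a_{n+i}\bigr),$$
which for $n > 0$ collapses after reindexing to the finite sum $\sum_{j=0}^{n-1} a_j a_{n-j}$. Pairing $j \leftrightarrow n - j$ and invoking $\{a_p, a_q\} = 0$ (and $a_m^2 = \tfrac{1}{2}\{a_m,a_m\} = 0$ on the diagonal $j = n/2$) annihilates every term; the case $n = 0$ is immediate and $n < 0$ is symmetric.

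For (iii), I would evaluate both sides on $w \in \ker_W a_0$ using two standard tools: the Borcherds commutator formula
$$[a_m, Y_W(v, z)]_\pm = \sum_{k \geq 0} \binom{m}{k} z^{m-k} Y_W(a_k v, z),$$
in which the $k = 0$ term drops because $a_0 v = 0$ for $v \in \overline{V}$; and the iterate (associator) formula expressing $Y_W(a_{-i} u, z)$ as a normally ordered product of $Y_W(a, z)$ and $Y_W(u, z)$. Expanding
$$[G, Y_W(v,z)] = \sum_{i \geq 1} \frac{1}{i} \bigl(a_{-i}[a_i, Y_W(v,z)] + [a_{-i}, Y_W(v,z)]\, a_i\bigr)$$
and $Y_W(Gv,z) = \sum_{i \geq 1} \tfrac{1}{i} Y_W(a_{-i} a_i v, z)$ via these formulas and matching mode by mode, the desired identity reduces to cancellations powered by the anticommutation $\{a_p, a_q\} = 0$ and closed by the condition $a_0 w = 0$. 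Conceptually, this can be repackaged as the super Jacobi identity for $Y(a, z_1) Y(a, z_2)$ integrated against $Y_W(v, z)$ with the logarithmic kernel $-\log(1 - z_2/z_1) = \sum_{i \geq 1} i^{-1} (z_2/z_1)^i$, which encodes $G$.

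Given (iii), part (ii) is immediate: take $W = V$, so that $\overline{V} = \ker_V a_0$ and $Y(Gv, z) = [G, Y(v, z)]$ holds on $\overline{V}$. Extracting the coefficient of $z^{-n-1}$ gives $(Gv)_n = [G, v_n]$, which rearranges to the derivation identity $G(v_n u) = (Gv)_n u + v_n (Gu)$. Since $G$ preserves conformal weight and raises the ``$a$-charge'' by~$2$, it is locally nilpotent on each graded piece, hence $\exp[G]$ is a well-defined automorphism as the exponential of a derivation. The main obstacle will be the bookkeeping in (iii): one has to verify that every normally ordered and $\delta$-function boundary term is absorbed by the pair of kernel conditions $a_0 v = 0$ and $a_0 w = 0$, together with the purely anticommuting mode relations of $a$; parts (i) and (ii) are then essentially formal consequences.
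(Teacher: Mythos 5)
Your proposal follows essentially the same route as the paper: part (i) via the mode computation $[L(n),a_m]=-m\,a_{m+n}$ with the leftover finite sum killed by anticommutativity together with $a_0|_{\overline{V}}=0$, and parts (ii)--(iii) by expanding $[G,Y_W(v,z)]$ with the commutator formula (dropping the $a_0v$ term) and $Y_W(Gv,z)$ with the iterate formula, then matching the two sums --- the paper carries out the binomial/Vandermonde reindexing that your sketch leaves implicit, and, like you, it only asserts rather than proves that $\exp[G]$ makes sense. The sole (cosmetic) difference is logical order: you deduce the derivation property (ii) from the module statement (iii) with $W=V$, whereas the paper proves (ii) directly and then observes that the same computation persists on $\ker_W a_0$.
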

 \begin{proof}(i) We start with the formula
$$[L(n),Y(a,x)]=(x^{n+1}\frac{d}{dx}+(n+1)x^{n})Y(a,x).$$
Thus,
$$[L(n),a_m]=-m a_{m+n}$$
and hence
$$\sum_{m=1}^\infty \frac{1}{m} [L(n),a_{-m} a_m]=\sum_{m=1}^\infty a_{-m+n} a_m
-\sum_{m=1}^\infty a_{-m} a_{m+n}.$$ But the last expression is zero
because $a_k$ and $a_l$ anti-commute for any $k$ and $l$, and because $a_0$
acts trivially on $\overline{V}$.

For (ii) we first observe \be \label{decomp}
[G,Y(u,x)]=\sum_{m=1}^\infty  \frac{1}{m} a_{-m}
[a_m,Y(u,x)]+\sum_{m=1}^\infty \frac{1}{m} [a_{-m},Y(u,x)]a_m. \ee
Let us denote the first and second summand on the right hand side in
(\ref{decomp}) by $A$ and $B$, respectively. Then
\bea
&& A=\sum_{n=1}^\infty   \sum_{m=n}^\infty x^{m-n} \frac{a_{-m}}{m} {m \choose n}
 Y(a_n u,x)+\sum_{n=1}^\infty \frac{a_{-n} x^n}{n}Y(a_0 u,x) \nonumber \\
&& =\sum_{n=1}^\infty \sum_{m = n}^\infty x^{m-n} \frac{a_{-m}}{m} {m \choose n} Y(a_n u,x), \nonumber
\eea
where we used the property $a_0|_{\overline{V}}=0$. Now replace
$m-n$ by $m$, so we get
$$A=\sum_{n=1}^\infty  \sum_{m=0}^\infty  x^m \frac{a_{-n-m}}{m+n} {m+n \choose n} Y(a_n u,x)$$
%$$=\sum_{n=1}^\infty  \sum_{m=0}^\infty x^m \frac{a_{-n-m}}{m+n} {m+n \choose m} Y(a_n u,x)$$
\be \label{comm} =\sum_{n=1}^\infty  \sum_{m=0}^\infty x^m
\frac{a_{-n-m}}{n} {m+n-1 \choose m} Y(a_n u,x). \ee

Similarly,
\bea
&& B=\sum_{m=1}^\infty \frac{1}{m} [a_{-m},Y(u,x)]a_m \nn
&& =\sum_{m=1}^\infty \sum_{n=0}^\infty  \frac{1}{m} {-m \choose n} x^{-m-n} Y(a_n u,x)a_m \nn
&& =\sum_{m=1}^\infty \sum_{n=0}^\infty  \frac{1}{m}  (-1)^n {m+n-1 \choose n} x^{-m-n} Y(a_n u,x)a_m. \nonumber
\eea

On the other hand, the iterate formula (see \cite{LL}) gives for odd
$w$ (for even $w$ just switch the sign):

$$Y(a_{-m}w,x_2)={\rm Res}_{x_1} \left( \frac{1}{(x_1-x_2)^m}
 Y(a,x_1)Y(w,x_2)+\frac{1}{(-x_2+x_1)^m} Y(w,x_2)Y(a,x_1) \right)$$
$$=\sum_{i = 0}^\infty a_{-m-i} {-m \choose i}(-1)^i x_2^{i} Y(w,x_2)$$
$$+\sum_{i = 0}^\infty  {-m \choose i}(-1)^{m-i} x_2^{-m-i} Y(w,x_2) a_i$$
$$=\sum_{i = 0}^\infty  a_{-m-i} {m+i-1 \choose i} x_2^{i} Y(w,x_2)$$
$$+\sum_{i = 0}^\infty  {m+i-1 \choose i}(-1)^{m} x_2^{-m-i} Y(w,x_2) a_i.$$
Now,

$$\sum_{m=1}^\infty \frac{1}{m} Y(a_{-m} a_m w,x_2)=$$
\be \label{iter} \sum_{m=1}^\infty \sum_{i = 0}^\infty  \frac{1}{m}
a_{-m-i} {m+i-1 \choose i} x_2^{i} Y(a_m w,x_2)+ \sum_{m=1}^\infty
\sum_{i = 0}^\infty  \frac{1}{m}   {m+i-1 \choose i}(-1)^{m}
x_2^{-m-i} Y(a _m w,x_2)  a_i. \ee Denote the first and second sum on
the right hand side in (\ref{iter}) by $A'$ and $B'$, respectively.

It is clear that $A=A'$ (just apply $m \rightarrow i$ and  $n
\rightarrow m$ in formula (\ref{comm})). Similarly one shows $B=B'$.

It is clear that all above formulas and calculations hold on
$\mbox{Ker}_W a_0$, where $W$ is any $V$-module. The proof follows.
\end{proof}

Assume now that $h \in V$ is an even vector  such that

\bea
&& L(n) h = \delta_{n,0} h, \quad n \in {\N} \label{rel-h-1}, \\
&& [h(n), h(m) ] =  \gamma \delta_{n+m,0}, \quad  \gamma \in
\mathbf{Q} \label{rel-h-2}, \\
&& h(0)  \ \mbox{acts semisimply on} \ V \    \mbox{with
eigenvalues in} \ \tfrac{1}{2} \mathbb{Z} \label{rel-h-3}, \\
&& h(n) a = \tfrac{1}{2} \delta_{n,0} a, \quad n \in {\N}
\label{rel-h-4} . \eea

Then
$ \sigma= \exp [ 2 \pi i h(0) ] $ is an automorphism  of $V$ of
order two.

Let $(W,Y_W)$ be any $V$--module. Then we can construct the
$\sigma$-twisted $V$--module $(W^{\sigma}, Y_W ^{\sigma})$ as
follows (cf. \cite{Li-tw}):
\bea
&& W^{\sigma}:= W \quad \mbox{as vector space}, \nonumber \\
&& Y_W^{\sigma} (\cdot,x) := Y_W( \Delta(h,x) \cdot,x), \ \
\mbox{where}\nonumber
\\
&&  \Delta(h,x) :=x ^{h(0)} \exp \left( \sum_{n = 1} ^{\infty}
\frac{h(n)}{-n} (-x) ^{-n} \right). \nonumber
\eea

The $\Delta$-operator satisfies \be \label{conjugate-delta}
\Delta(h,x_2)Y_W(u,x_0)\Delta(h,x_2)^{-1}=Y_W(\Delta(h,x_2+x_0)u,x_0).
\ee In particular for $u=a$ we get \be \label{conjugate-delta-1}
\Delta(h,x_2)Y_W(a,x_0)\Delta(h,x_2)^{-1}=Y_W((x_2+x_0)^{1/2} a ,x_0).
\ee

Let
$$Y_W ^\sigma(a,x)=\sum_{m \in \mathbb{Z}} a _{m+1/2} x^{-m-1/2}$$
and
$$G^{tw}=\sum_{m=0}^\infty \frac{1}{m+1/2} a_{-m-1/2} a_{m+1/2}.$$

Then we have a $\sigma$-twisted version of Theorem
\ref{G-der-general}.

\begin{theorem} \label{Gtw-der-gen}
On any $\sigma$--twisted $V$--module $ (W^{\sigma}, Y_W ^{\sigma})$,
we have:
\begin{itemize}
\item[(i)] $[L(n),G^{tw}]=0 \ \ {\rm for}  \ \ n \in \mathbb{Z}$,
i.e., $G ^{tw}$ is a screening operator.

\item[(ii)] For any $v \in \overline{V}$ we have
$$[G^{tw},Y_W ^{\sigma}(v,x)]=Y_W ^{\sigma}(G v, x). $$
\end{itemize}
\end{theorem}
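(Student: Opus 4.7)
My plan is to mirror the proof of Theorem \ref{G-der-general}, with integer modes of $a$ replaced throughout by the half-integer modes $a_{m+1/2}$ ($m \in \mathbb{Z}$) appropriate to the $\sigma$-twisted sector. Two preliminaries carry over directly. First, since $h(0) a = \tfrac{1}{2} a$ and $h(n) a = 0$ for $n \geq 1$, the $\Delta$-operator simplifies to $\Delta(h, x) a = x^{1/2} a$, so that $a$ remains primary of conformal weight $1$ with respect to the twisted Virasoro field; in particular the usual mode relation $[L(n), a_r] = -r\, a_{n+r}$ persists for $r \in \tfrac{1}{2} + \mathbb{Z}$. Second, the anticommutator relations $\{a_n, a_m\} = 0$ for integers force the components $a_i a$ to vanish for all $i \geq 0$; via the twisted Borcherds identity this upgrades to $\{a_r, a_s\} = 0$ for all half-integers $r, s$.

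For part (i), expanding as in the untwisted proof gives
$$[L(n), G^{tw}] = \sum_{m \geq 0} a_{n-m-1/2}\, a_{m+1/2} \;-\; \sum_{m \geq 0} a_{-m-1/2}\, a_{n+m+1/2}.$$
Anticommutativity rewrites the second sum with opposite sign, and combining with the first yields $\sum_{r \in \frac{1}{2}+\mathbb{Z}} a_{n-r}\, a_r$. The substitution $r \mapsto n-r$ combined once more with anticommutativity shows this sum equals its own negative, hence vanishes. Pleasantly, no zero-mode ambiguity arises, since $0 \notin \tfrac{1}{2} + \mathbb{Z}$.

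For part (ii), I follow the two-sum decomposition of the untwisted argument. Writing $[G^{tw}, Y_W^\sigma(v, x)] = A_{tw} + B_{tw}$ as in (\ref{decomp}), the twisted commutator formula
$$[a_r, Y_W^\sigma(v, x)] = \sum_{k \geq 0} \binom{r}{k} x^{r-k}\, Y_W^\sigma(a_k v, x)$$
(for $v \in \overline{V}$, inherited from the untwisted Jacobi identity via $\Delta(h,x)$-conjugation) together with $a_0 v = 0$ lets me expand $A_{tw}$ and $B_{tw}$ into double sums parallel to (\ref{comm}). Simultaneously, I compute $Y_W^\sigma(G v, x) = \sum_{m \geq 1} \tfrac{1}{m}\, Y_W^\sigma(a_{-m} a_m v, x)$ using the twisted iterate formula for $a_{-m} a_m v$, yielding a decomposition $A'_{tw} + B'_{tw}$ parallel to (\ref{iter}). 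The identifications $A_{tw} = A'_{tw}$ and $B_{tw} = B'_{tw}$ then follow by the same index substitutions ($m \leftrightarrow i$) used in the untwisted case.

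The main obstacle is the careful bookkeeping of half-integer binomial coefficients $\binom{r}{k}$ and of the branch factors $(x_1 - x_2)^{1/2}$ and $(-x_2 + x_1)^{1/2}$ appearing in the twisted iterate formula. Because $\sigma$ acts on $a$ only by a sign and because $\Delta(h, x) a = x^{1/2} a$ renders $Y_W^\sigma(a, x)$ a mere scalar multiple of $Y_W(a, x)$, the underlying binomial identities and index manipulations carry over verbatim from the proof of Theorem \ref{G-der-general}, and no genuinely new combinatorial input is required.
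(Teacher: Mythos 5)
Your proof is correct, and its overall strategy---mirror the untwisted argument for Theorem \ref{G-der-general} with half-integer modes, proving (i) by direct mode computation with anticommutativity of the twisted modes $a_r$ and (ii) by matching the twisted commutator expansion against an iterate-formula expansion of $Y_W^\sigma(Gv,x)$---is exactly the paper's. The one genuine (if minor) divergence is in how the right-hand side of (ii) is expanded: the paper writes $Y_W^\sigma(Gv,x)=Y_W(\Delta(h,x)Gv,x)$, pushes $\Delta(h,x)$ through the modes $a_{-m}$ using $\binom{1/2}{n}$-coefficients, applies the \emph{untwisted} iterate formula, and then needs a Vandermonde convolution to collapse the resulting triple sum into the half-integer binomial coefficients $\binom{\pm m+1/2}{\,\cdot\,}$ appearing in the commutator expansion; you instead invoke the \emph{twisted} iterate formula from \cite{Li-tw} directly, which produces those coefficients at once and absorbs the Vandermonde step into a standard citation. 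Your route is slightly cleaner at the cost of relying on the twisted Jacobi/iterate identity as a black box; the paper's route stays within the untwisted formalism plus the $\Delta$-operator, which is why the explicit convolution appears there. Your preliminary observations ($\Delta(h,x)a=x^{1/2}a$, persistence of $[L(n),a_r]=-ra_{n+r}$, and $\{a_r,a_s\}=0$ deduced from $a_ia=0$ for $i\ge 0$) are all correct and are used, mostly implicitly, in the paper as well.
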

\begin{proof} The strategy of the proof is as in Theorem \ref{G-der-general}, so we
omit unnecessary details. From the twisted commutator formula (cf.
\cite{Li-tw}) we get \bea \label{comm-tw}
&& [G^{tw},Y_W^\sigma(v,x)] \nonumber \\
&& =\sum_{m=0}^\infty \frac{1}{m+1/2} a_{-m-1/2}[a_{m+1/2},Y_W
^\sigma(v,x)]+
\sum_{m=0}^\infty \frac{1}{m+1/2}[a_{-m-1/2},Y_W ^\sigma(v,x)]a_{m+1/2} \nonumber \\
&& =\sum_{m=0}^\infty \sum_{n=0}^\infty \frac{1}{m+1/2} x^{m+1/2-n} {m+1/2 \choose n} a_{-m-1/2} Y_W ^\sigma(a_n v,x) \nonumber \\
&& + \sum_{m=0}^\infty \sum_{n=0}^\infty \frac{1}{m+1/2} x^{-m-1/2-n}
{-m-1/2 \choose n} Y^\sigma_W(a_n v,x)a_{m+1/2} \eea

On the other hand, the formula (\ref{conjugate-delta-1}) and the
(untwisted) iterate formula gives \bea \label{iterate-tw}
&& Y_W^\sigma(G v ,x)=Y_W(\Delta(h,x)G v,x)=\sum_{m=1}^\infty  \frac{1}{m}  Y_W(\Delta(h,x) a_{-m} a_m a,x) \nonumber \\
&& =\sum_{m=1}^\infty  \sum_{n=0}^\infty \frac{1}{m} { 1/2 \choose n} x^{1/2-n} Y_W(a_{n-m} \Delta(h,x) a_m v,x) \nonumber \\
&& = \sum_{m=1}^\infty  \sum_{n=0}^\infty \sum_{j=0}^\infty
\frac{1}{m}  (-1)^j {1/2 \choose n} {n-m \choose j} x^{1/2-n+j }
a_{n-m-j-1/2}
Y_W ^\sigma(a_m v,x)+ \nonumber \\
&& + \sum_{m=1}^\infty  \sum_{n=0}^\infty \sum_{j=0}^\infty
\frac{1}{m}   (-1)^{m+n+j} {1/2 \choose n} {n-m \choose j}
x^{1/2-j-m  } Y_W^\sigma(a_m v,x) e^\alpha_{j-1/2}, \eea where as in
the previous theorem we assumed that $v$ is odd.

The rest is an application of Vandermonde convolution for binomial
coefficients appearing in (\ref{iterate-tw}) and matching
appropriate terms in (\ref{comm-tw}) and (\ref{iterate-tw}).  In
the process we also use the fact that $a_0$ acts trivially on
$\overline{V}$.
\end{proof}

\section{Lattice vertex algebras, $\WW$-subalgebras and screening operators}
\label{konstrukcija}

Assume that $p$ is an odd natural number, $p \ge 3$. Let $$L = {\Z}
\alpha, \ \ \ \la \alpha , \alpha \ra = p.$$ Also, let $$V_L=M(1) \otimes \mathbb{C}[L]$$ be the associated
vertex superalgebra, where as usual $M(1) \cong U(\hat{\mathfrak{h}}_{<0})$ is the vacuum module for the
corresponding Heisenberg algebra $\hat{\mathfrak{h}}$, $\mathfrak{h}=L \otimes_{\mathbb{Z}} \mathbb{C}$.
Consider $D= {\Z} (2 \a)$. Then $V_D$ is a vertex subalgebra of $V_L$.

Define the Virasoro vector

$$\omega = \frac{1}{2 p} (\a (-1) ^2 + (p-2) \a (-2))$$
and screening operators
$$Q = e ^{\a}_0, \quad \widetilde{Q} =e ^{ - \tfrac{2}{p}\a \ }_0.$$
Define the following vertex (super)algebras
$$\overline{V_L}=\mbox{Ker}_{V_L}  Q \cap \mbox{Ker}_{V_L}
\widetilde{Q} $$ and
$$ \WW_{2,p} = \mbox{Ker}_{V_D}  Q \cap \mbox{Ker}_{V_D}
\widetilde{Q}.$$
Then $\overline{V_L}$ is a ${\N}$--graded vertex superalgebra and $
\WW_{2,p} \subset \overline{V_L} $ is a vertex operator algebra of
central charge $c_{2,p}=1-\frac{3(p-2)^2}{p}$.

In particular, if $p=3$ the central charge is zero; this case is of considerable interest in the paper.

Now we shall apply the general construction from Section
\ref{der-construction} in the case of lattice vertex superalgebra
$V_L$. Then we have  the following operator
$$ G = \sum_{i = 1 } ^{\infty} \frac{1}{i} e ^{\a}_{-i} e ^{\a}
_i.$$
It is convenient to express $G$ as
\be \label{G-log}
- {\rm Res}_{x_1,x_2} {\rm ln}(1-\frac{x_2}{x_1}) Y(e^{\alpha},x_1)Y(e^{\alpha},x_2).
\ee

Then applying Theorem \ref{G-der-general} we get:

\begin{theorem} \label{G-der}
On the vertex algebras $\WW_{2,p}$ and $\overline{V_L}$ we have:
\item[(i)] $[L(n), G] = 0$ for every $n \in {\Z}$, i.e, $G$ is a
screening operator.

\item[(ii)] $[Q,G] = [ \widetilde{Q}, G] = 0$.

\item[(iii)] The operator $G$ is a derivation on the vertex
algebra ${\rm Ker}_{V_L}(Q)$, and in particular on
$\overline{V_L}$ and $\WW_{2,p}$.

\item[(iv)] Relations (i)-(iii)  hold on $$\mbox{Ker}_{ \ V_{L+ \tfrac{\ell}{p} \a}} Q,$$
where $\ell \in \{0, \dots, p-1\}$.
\end{theorem}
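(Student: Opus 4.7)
The strategy is to apply Theorem~\ref{G-der-general} to $V=V_L$ with the odd vector $a=e^\alpha$, and then to upgrade from $\mathrm{Ker}_{V_L}(Q)$ to $\overline{V_L}$ and $\WW_{2,p}$.

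First I would verify the hypotheses of Theorem~\ref{G-der-general}. The vector $e^\alpha$ has odd parity in $V_L$ because $\langle\alpha,\alpha\rangle=p$ is odd, and its self-OPE $Y(e^\alpha,x_1)Y(e^\alpha,x_2)=(x_1-x_2)^p\cdot\text{(regular)}$ has no singular part for $p\ge 3$; the anticommutator formula then yields $\{e^\alpha_n,e^\alpha_m\}=0$ for all $n,m$, verifying~(\ref{rel-c-1}). For (\ref{rel-c-2}), a direct calculation with $\omega=\tfrac{1}{2p}\alpha(-1)^2\mathbf{1}+\tfrac{p-2}{2p}\alpha(-2)\mathbf{1}$ gives $L(0)e^\alpha=(\tfrac{p}{2}-\tfrac{p-2}{2})e^\alpha=e^\alpha$ (using that the Heisenberg weight is $p/2$ and that the shift $\nu=\tfrac{p-2}{2p}\alpha$ contributes $-\langle\nu,\alpha\rangle=-\tfrac{p-2}{2}$ to the zero mode), while $L(n)e^\alpha=0$ for $n\ge 1$ follows from $\alpha(n)e^\alpha=0$. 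Theorem~\ref{G-der-general}(i) then yields part~(i) immediately, and Theorem~\ref{G-der-general}(ii) yields the derivation statement on $\mathrm{Ker}_{V_L}(Q)$; to get the derivation property on the smaller subalgebras $\overline{V_L}$ and $\WW_{2,p}$ in part~(iii) it suffices to show that $G$ preserves them, which is precisely part~(ii).

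The equality $[Q,G]=0$ in (ii) is immediate: $Q=e^\alpha_0$ anticommutes with each $e^\alpha_i$ by (\ref{rel-c-1}) and so commutes with each product $e^\alpha_{-i}e^\alpha_i$. The core claim is $[\widetilde Q,G]=0$, which I would prove by exploiting that $\widetilde Q e^\alpha$ is a total derivative. A direct residue computation gives $e^{-(2/p)\alpha}_0 e^\alpha=-\tfrac{2}{p}\alpha(-1)e^{(1-2/p)\alpha}$, and combining this with $L(-1)e^{(1-2/p)\alpha}=\tfrac{p-2}{p}\alpha(-1)e^{(1-2/p)\alpha}$ (using that the translation operator $L(-1)$ is unaffected by the shift of conformal vector) produces $\widetilde Q e^\alpha=-\tfrac{2}{p-2}L(-1)e^{(1-2/p)\alpha}$, so $[\widetilde Q,Y(e^\alpha,x_i)]=Y(\widetilde Q e^\alpha,x_i)=-\tfrac{2}{p-2}\partial_{x_i}Y(e^{(1-2/p)\alpha},x_i)$. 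Substituting this into (\ref{G-log}) and integrating by parts in each variable against $\ln(1-x_2/x_1)$, the commutator $[\widetilde Q,G]v$ reduces, modulo a boundary term proportional to $e^\alpha_0 v=Qv=0$, to $\tfrac{2}{p-2}\mathrm{Res}_{x_1,x_2}\tfrac{1}{x_2}\delta(x_1/x_2)\,Y(e^\alpha,x_1)Y(e^{(1-2/p)\alpha},x_2)\,v$. Since $\langle\alpha,(1-2/p)\alpha\rangle=p-2\ge 1$, the OPE carries a factor $(x_1-x_2)^{p-2}$, and the identity $(x_1-x_2)^{k}\delta(x_1/x_2)=0$ for $k\ge 1$ forces this residue to vanish. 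Hence $[\widetilde Q,G]v=0$ on $\overline{V_L}$, which simultaneously closes the loop on~(iii).

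Part~(iv) follows by running the entire argument on $W=V_{L+\ell\alpha/p}$ as a $V_L$-module: Theorem~\ref{G-der-general}(iii) supplies $[L(n),G]=0$ and the $G$-intertwining identity on $\mathrm{Ker}_W(Q)$; $[Q,G]=0$ survives verbatim from the anticommutation relations of modes of $e^\alpha$; and $[\widetilde Q,G]=0$ follows from the same integration-by-parts and delta-function cancellation, since the OPE factor $(x_1-x_2)^{p-2}$ is insensitive to the lattice coset. The main obstacle throughout is the $[\widetilde Q,G]=0$ computation: one has to justify the formal integration by parts carefully, keep track of the expansion conventions of $\tfrac{1}{x_1-x_2}$ in the two regions $|x_1|>|x_2|$ and $|x_2|>|x_1|$, and verify that all leftover boundary contributions are proportional to $e^\alpha_0$ and hence vanish on $\mathrm{Ker}(Q)$.
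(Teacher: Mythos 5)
Your proposal is correct and, for parts (i), (iii) and (iv), follows exactly the route the paper takes: the paper offers no separate proof of Theorem \ref{G-der} beyond the phrase ``applying Theorem \ref{G-der-general} we get,'' so your verification of the hypotheses (\ref{rel-c-1})--(\ref{rel-c-2}) for $a=e^\alpha$ (odd parity since $\langle\alpha,\alpha\rangle=p$ is odd, regular self-OPE, conformal weight one under the shifted $\omega$) is precisely what is being invoked. Where you genuinely add something is part (ii): the identity $[\widetilde Q,G]=0$ does not follow from Theorem \ref{G-der-general} and the paper leaves it unargued. Your argument for it is sound: $\widetilde Q e^\alpha=-\tfrac{2}{p}\alpha(-1)e^{(1-2/p)\alpha}=-\tfrac{2}{p-2}L(-1)e^{(1-2/p)\alpha}$ is indeed a total derivative, so $[\widetilde Q,e^\alpha_n]=\tfrac{2}{p-2}\,n\,e^{(1-2/p)\alpha}_{n-1}$, and after reindexing the two halves of $G=\sum_{i\ge1}\tfrac1i e^\alpha_{-i}e^\alpha_i$ the commutator collapses (using that $e^\alpha_m$ and $e^{(1-2/p)\alpha}_n$ strictly anticommute, as $\langle\alpha,(1-\tfrac2p)\alpha\rangle=p-2\ge1$ is odd) to the full bilateral sum $\sum_{n\in\Z}e^\alpha_{-n-1}e^{(1-2/p)\alpha}_{n}$ minus the single term $e^\alpha_{0}e^{(1-2/p)\alpha}_{-1}$; the former is killed by $(x_1-x_2)^{p-2}\delta(x_2/x_1)=0$ and the latter is your ``boundary term,'' which vanishes on $\mathrm{Ker}\,Q$ after anticommuting $e^\alpha_0$ to the right. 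This is exactly the justification the paper omits, and it is needed to conclude that $G$ preserves $\overline{V_L}$ and $\WW_{2,p}$ in part (iii). The only cosmetic gap is that for $\WW_{2,p}$ one should also note that $G$ preserves $V_D$, which is clear since $G$ shifts the lattice grading by $2\alpha\in D$.
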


Next we shall consider twisted $V_L$--modules. As in
\cite{AdM-sigma}, we have that
$$ \sigma :=\exp[ \frac{\pi i}{p} \a (0) ]. $$
is a canonical automorphism of order two of the vertex
superalgebra $V_L$. Moreover,  $V_L$ admits $\sigma$-twisted
modules whose description we briefly recall here \cite{Li-tw}.

Any irreducible $\sigma$-twisted $V_L$ module $M$ is isomorphic to
$$(V_{L+ \frac{(\ell + 1/2) \alpha}{p}},Y^\sigma) \ \quad (0 \le \ell \le p-1),$$ where $$V_{L+
\frac{(\ell + 1/2) \alpha}{p}}= V_{L+ \frac{\ell+  \alpha}{p}} \quad
\mbox{ as a vector spaces},$$ and the  twisted vertex operator
$Y^\sigma$ is defined
$$Y^\sigma( \cdot,x)=Y(\Delta(\alpha / 2p, x) \cdot,x).$$

Then  applying Theorem \ref{Gtw-der-gen}  in the case
$$ V = V_L, \ a = e ^{\alpha}, \ h = \frac{\alpha}{2 p}, $$
we get  the following result:

\begin{proposition} \label{Gtw-der}
\begin{itemize}
\item[(i)] The relation $$[L(n),G^{tw}]=0 \ \ {\rm for}  \ \ n \in
\mathbb{Z}$$ holds on any $\sigma$-twisted $V_L$-module.
\item[(ii)]The operator $G^{tw}$ is a $\sigma$-twisted derivative
of ${\rm Ker}_{V_L} Q$, and thus of $\WW_{2,p}$. In other words,
for any $ v \in {\rm Ker}_{V_L} Q$ we have
$$[G^{tw},v_n]=(G v)_n \ \ {\rm for} \  \ n \in \tfrac{1}{2} {\Z}. $$
\end{itemize}
\end{proposition}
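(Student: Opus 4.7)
The plan is to obtain Proposition \ref{Gtw-der} as a direct specialization of Theorem \ref{Gtw-der-gen} with $V = V_L$, $a = e^\alpha$ and $h = \alpha/(2p)$. Under these identifications we have $\mbox{Ker}_V a_0 = \mbox{Ker}_{V_L} Q$, the automorphism $\exp[2\pi i\,h(0)] = \exp[\pi i\, \alpha(0)/p]$ coincides with the $\sigma$ of the proposition, and the twisted derivation operator of Theorem \ref{Gtw-der-gen} becomes the $G^{tw}$ defined above. What must be verified are the hypotheses (\ref{rel-c-1})--(\ref{rel-c-2}) and (\ref{rel-h-3})--(\ref{rel-h-4}) for these choices; once this is done, part (i) is Theorem \ref{Gtw-der-gen}(i) and part (ii) is Theorem \ref{Gtw-der-gen}(ii) read off mode by mode.

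For $a = e^\alpha$, oddness is immediate from $\langle\alpha,\alpha\rangle = p$ being odd. Using the standard lattice formula
$$Y(e^\alpha,x)\, e^\alpha \;=\; \epsilon(\alpha,\alpha)\, x^p \exp\!\Big(\sum_{k\ge 1}\tfrac{\alpha(-k)}{k}\,x^k\Big)\, e^{2\alpha}\;\in\; x^p V_L[[x]],$$
one gets $(e^\alpha)_n e^\alpha = 0$ for every $n\ge 0$, so the Jacobi anticommutator identity
$$\{(e^\alpha)_n,(e^\alpha)_m\} \;=\; \sum_{k\ge 0}\binom{n}{k}\big((e^\alpha)_k e^\alpha\big)_{n+m-k}$$
yields (\ref{rel-c-1}). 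For (\ref{rel-c-2}) I split the modified Virasoro modes as $L(n) = L^{(0)}(n) - \frac{(n+1)(p-2)}{2p}\alpha(n)$, where $L^{(0)}$ is attached to the Sugawara vector $\omega_0 = \frac{1}{2p}\alpha(-1)^2$. A short computation using $L^{(0)}(0) e^\alpha = \frac{p}{2}\,e^\alpha$ and $\alpha(0) e^\alpha = p\,e^\alpha$ gives $L(0) e^\alpha = e^\alpha$, and for $n\ge 1$ both $L^{(0)}(n) e^\alpha$ (since $e^\alpha$ is primary of weight $p/2$ for $\omega_0$) and $\alpha(n) e^\alpha$ vanish, giving $L(n) e^\alpha = 0$.

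For $h = \alpha/(2p)$, the operator $h(0)= \alpha(0)/(2p)$ acts semisimply on $V_L$ with spectrum in $\frac{1}{2}\Z$ (since $\alpha(0)$ has spectrum in $p\Z$ on $V_L$), giving (\ref{rel-h-3}), and $h(0) e^\alpha = \frac{\langle\alpha,\alpha\rangle}{2p}e^\alpha = \frac{1}{2}e^\alpha$, $h(n)e^\alpha = 0$ for $n\ge 1$, giving (\ref{rel-h-4}). With all hypotheses verified, Theorem \ref{Gtw-der-gen} applies on any $\sigma$-twisted $V_L$-module, yielding (i) directly and, after expanding $[G^{tw}, Y^\sigma_W(v,x)] = Y^\sigma_W(Gv, x)$ in modes (integer modes for even $v$, half-integer modes for odd $v$), also (ii); the inclusion $\WW_{2,p} \subset \mbox{Ker}_{V_L} Q$ gives the conclusion for $\WW_{2,p}$ as well. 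There is no real obstacle: the argument is a careful but routine specialization of the general framework of Section \ref{der-construction} to the lattice setting, the only delicate point being the conformal-weight computation for $e^\alpha$, which is what forces the precise background-charge coefficient $(p-2)/(2p)$ in the definition of $\omega$.
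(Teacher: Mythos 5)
Your argument is exactly the paper's: Proposition \ref{Gtw-der} is obtained there by specializing Theorem \ref{Gtw-der-gen} to $V=V_L$, $a=e^{\alpha}$, $h=\alpha/(2p)$, which is precisely what you do, and your verification of the hypotheses (\ref{rel-c-1})--(\ref{rel-c-2}) and (\ref{rel-h-3})--(\ref{rel-h-4}), including the weight computation $L(0)e^{\alpha}=e^{\alpha}$ forced by the background charge $(p-2)/(2p)$, supplies details the paper leaves implicit. No gap; the proposal is correct and follows the same route.
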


\section{Fusion rules for certain Virasoro modules of central  charge $c_{2,p}$}

Let us fix some notation. In what follows $V^{Vir}(c,0)$
denote the universal Virasoro vertex operator algebra of central charge $c$ (cf. \cite{LL} for details).
The corresponding simple vertex algebra will be denoted by $L^{Vir}(c,0)$. It is known (cf. \cite{LL}, \cite{W})
that any highest weight irreducible module $L^{Vir}(c,h)$, $h \in \mathbb{C}$ is a $V^{Vir}(c,0)$-module.
We remark that for $c=0$ the algebra $L(0,0)$ is $1$-dimensional.

The aim of this section is to analyze fusion rules among  certain
irreducible Virasoro modules of central charge $c_{2,p}$, in particular $c=0$, viewed as
modules for the vertex algebra $V^{Vir}(c_{2,p},0)$.

Let
$$h_{r,s}=\frac{(pr-2s)^2-(p-2)^2}{8p}.$$
Here we analyze fusion rules  among $L^{Vir}(c_{2,p},3p-2)$ and some
special modules of type  $L^{Vir}(c_{2,p},n)$, where $n \in \mathbb{N}$.
Related fusion rules for $p=3$ were previously analyzed in \cite{M1}.

Here is the main result

\begin{theorem} \label{fr-p}
The space of intertwining operators
\bea \label{03p2} && I { L^{Vir}(c_{2,p},h) \choose
L^{Vir}(c_{2,p},h_{5,1}) \ \ L^{Vir}(c_{2,p},h_{2n+3,1} ) }\eea
is nontrivial only if
%\frac{(n+1)(pn+2p-2)}{2})
\be \label{03p2-fr} h \in \left\{ h_{2n-1,1}, h_{2n+1,1},
h_{2n+3,1}, h_{2n+5,1}, h_{2n+7,1}
%
%\frac{(n-1)(pn-2)}{2},\frac{n(pn+p-2)}{2}, \frac{(n+1)(pn+2p-2}{2},
%\frac{(n+2)(pn+3p-2)}{2},\frac{(n+3)(pn+4p-2)}{2}
%
 \right\}. \ee

\end{theorem}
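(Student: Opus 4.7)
The plan is to exploit the null-vector structure of $L^{Vir}(c_{2,p}, h_{5,1})$. The weight $h_{5,1}$ sits on the first column of the Kac table for $c_{2,p}$, and by the Feigin--Fuchs analysis the Verma module $M(c_{2,p}, h_{5,1})$ possesses a singular vector $\chi_{5}$ at level $5$, which is explicitly computable via the Benoit--Saint-Aubin recursion. In the irreducible quotient $L^{Vir}(c_{2,p}, h_{5,1})$ we then have $\chi_{5} \cdot v_1 = 0$, where $v_1$ denotes the highest-weight generator.

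For any intertwining operator
$$
\mathcal{Y} \in I { L^{Vir}(c_{2,p},h) \choose L^{Vir}(c_{2,p},h_{5,1}) \ \ L^{Vir}(c_{2,p},h_{2n+3,1}) },
$$
I would translate this null-vector relation into a differential equation via the standard Virasoro commutator $[L(m), \mathcal{Y}(v_1, z)] = \sum_{k \ge 0} {m+1 \choose k} z^{m+1-k} \mathcal{Y}(L(k-1) v_1, z)$. With $v_2$ the highest-weight generator of $L^{Vir}(c_{2,p}, h_{2n+3,1})$ and $w'$ ranging over the graded dual of $L^{Vir}(c_{2,p}, h)$, the vanishing of $\chi_{5}$ acting on $\mathcal{Y}(v_1, z) v_2$ yields a linear fifth-order Fuchsian ODE satisfied by
$f(z) = \langle w', \mathcal{Y}(v_1, z) v_2 \rangle,$
with regular singularities at $z = 0$ and $z = \infty$.

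The theorem then follows from the indicial analysis at $z=0$: any non-trivial local solution of the form $f(z) = z^\lambda(1 + O(z))$ forces $\lambda$ to be a root of an explicit degree-$5$ polynomial $P_{n,p}(\lambda)$, and $h = \lambda + h_{5,1} + h_{2n+3,1}$ then gives the admissible weight of the target module. I would identify these roots by using the free-field realisation of Section \ref{konstrukcija}: under the Coulomb-gas correspondence the chiral primary of weight $h_{r,1}$ is represented by the vertex operator of momentum $-(r-1)\alpha/2$, and charge conservation together with the count of admissible insertions of the screening $Q = e^{\alpha}_0$ enforces a Clebsch--Gordan-type selection rule
$(r_1,1) \otimes (r_2,1) = \bigoplus_{r_3} (r_3,1)$ with $r_3 \in \{|r_1-r_2|+1, |r_1-r_2|+3, \ldots, r_1+r_2-1\}.$
Specialising to $r_1 = 5$, $r_2 = 2n+3$ gives exactly $r_3 \in \{2n-1, 2n+1, 2n+3, 2n+5, 2n+7\}$, matching the claim.

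The hard part will be rigorously verifying that the indicial polynomial $P_{n,p}(\lambda)$ factors precisely into the five predicted roots. In principle this is a polynomial identity in $n$ and $p$ obtained from $h_{r,1} = \frac{(pr-2)^2-(p-2)^2}{8p}$, but the coefficients of the BPZ equation arising from $\chi_{5}$ become combinatorially heavy. The efficient route is to exhibit five linearly independent local solutions on the Coulomb-gas side --- one screened integral for each candidate fusion channel --- and then invoke the fact that a fifth-order Fuchsian ODE admits at most five linearly independent local solutions at any regular singular point, so the Coulomb-gas list must coincide with the set of admissible exponents. This strategy parallels and extends the $p=3$ analysis of \cite{M1}.
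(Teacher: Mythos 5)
Your proposal is correct and follows essentially the same route as the paper: use the level-$5$ singular vector of $M^{Vir}(c_{2,p},h_{5,1})$ (which the paper writes out explicitly), convert it via the commutator formula $[L(-n),\mathcal{Y}(u,x)]$ into a fifth-order differential equation for the matrix coefficient between highest-weight vectors, and read off the five admissible exponents, hence the five admissible $h$. The only cosmetic difference is that the paper simply solves the resulting degree-$5$ equation in $h$ directly, whereas you propose to identify the roots by exhibiting five Coulomb-gas solutions; both close the argument, since the five candidate exponents are distinct.
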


{\em Proof.} Proofs of similar results have already appear in the
literature so here we do not provide full details (see \cite{M2} for
instance).

We start from intertwining operator $\mathcal{Y}$ of type as in
(\ref{03p2}), and consider highest weight vectors $u \in L^{Vir}(c_{2,p},h)$, $v \in L^{Vir}(c_{2,p},h_{5,1})$
and $w \in L(c_{2,p},h_{2n+3,1})$.Then the matrix coefficient
$$\langle w', \mathcal{Y}(u,x) v \rangle$$ is
proportional to $x^{h-h_{2n+3,1}-h_{5,1}}$. It is known (see \cite{FF}) that
$L^{Vir}(c_{2,p},3p-2)$ can be obtained as a quotient of the Verma module
$M^{Vir}(c_{2,p},3p-2)$ by the submodule generated by a pair of singular vectors, where one
singular vector is of (relative) degree $5$. While one can analyze both vectors
explicitly, for our purposes we only analyze the singular vector of
degree five. It is not hard to see that
$$v_{sing}=\biggl(L(-1)^5-10pL(-2)L(-1)^3+(21p^2-15p)L(-3)L(-1)^2$$
$$+16p^2 L(-2)^2L(-1)+(42p^2-18p-36p^3)L(-4)L(-1)+(-24p^3+16p^2)L(-3)L(-2)$$
$$+(-66p^3+46p^2+36p^4-12p)L(-5) \biggr){\bf 1} \in M^{Vir}(c_{2,p},3p-2)$$
is such a vector (unique up to a constant). This singular vector
give rise to a differential equation of degree $5$ satisfied by
$x^{h-h_{2n+3,1}-h_{5,1}}$. This follows from the relations
$$[L(-n),\mathcal{Y}(u,x)]=(x^{-n+1} \frac{d}{dx}-(n-1) h_{2n+3,1} x^{-n})\mathcal{Y}(u,x), \ \ n \geq 1.$$
The differential equation obtained (we omit an explicit formula
here) can be now solved and the $h$-solutions are given in
(\ref{03p2-fr}). \epfv

\begin{remark} {\em To prove the "if" part in the theorem we would have to analyze both singular
vectors and describe $A(V^{Vir} (c_{2,p},0))$-bimodule
$A(L^{Vir}(c_{2,p},3p-2))$ (cf. \cite{M2}).

%The previous theorem easily generalizes to $(2,p)$ minimal models
%where $L^{Vir}(0,7)$ is replaced with $L^{Vir}(0,3p-2)$, which also
%has a singular vector of degree $5$ for every $p \geq 3$. The
%relevant fusion rules are given in the next proposition.
}
\end{remark}

\section{The vertex operator algebra $\WW_{2,p}$}

We define the following elements in $V_L$:
$$ a ^- = Q e^{-2\a}, \quad a ^{+} = G Q e ^{-2 \a}.$$
Since $Q$ anti-commutes with itself, we have $Q^2=0$ and hence $Q
a^-=0$ and $Q a^+=0$. It is not hard to see that $G Q e^{-2 \a} \neq 0$.
Since $\widetilde{Q}$ annihilates $e
^{-2\a}$ and commutes with $Q$ and $G$, we conclude  that $a^\pm
\in \overline{V_L}$. A related vertex operator superalgebra has
been encountered in our study of the triplet vertex algebra and is
a subject of our forthcoming paper \cite{AdM-2009-2} (see also
\cite{FFT}).

Let $V$ be the subalgebra of $\overline{V_L}$ generated by $1, a
^{-}, a ^{+}$.

The following lemma is useful for constructing singular vectors in
$\overline{V_L}$:
\begin{lemma} \label{non-triv} We have:
\item[(i)] $Y(a ^-, x) Q e ^{ - (n+1) \a } \in W ((x)), $
where $$W = U  (Vir). Q e^{-(n+2) \a} \cong L ^{Vir}(c_{2,p},
h_{2n+5,1}).$$

\item[(ii)] $ G^{n} Q e^{-(n+1) \a} \ne 0 \quad  \mbox{for} \ n
\in {\N}$.
\item[(iii)] $ G^{n} Q e^{-(n+1) \a} \in V$.
\end{lemma}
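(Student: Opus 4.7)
I would prove the parts in the order (i), (iii), (ii), with (ii) and (iii) sharing a single induction on $n$.

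For (i), the starting observation is that $Q=e^\a_0$ and $e^{-2\a}$ are both odd (since $\<\a,\a\>=p$ is odd), so the super-commutator formula gives $Y(a^-,x)=\{Q,Y(e^{-2\a},x)\}$. Applying this to $Qe^{-(n+1)\a}$ and combining with $Q^2=0$ yields
\[ Y(a^-,x)Qe^{-(n+1)\a}=Q\cdot Y(e^{-2\a},x)Qe^{-(n+1)\a},\]
so the image lies in $\mbox{Im}(Q)((x))\subseteq \mbox{Ker}(Q)((x))$ inside the Fock module $F_{n+1}:=M(1)\otimes e^{-(n+1)\a}$. Since $a^-$ and $Qe^{-(n+1)\a}$ are Virasoro highest weight vectors of weights $h_{5,1}$ and $h_{2n+3,1}$ respectively, Theorem \ref{fr-p} restricts the Virasoro components of the image to weights in $\{h_{2n-1,1},h_{2n+1,1},h_{2n+3,1},h_{2n+5,1},h_{2n+7,1}\}$. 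I eliminate four: $h_{2n-1,1}$ and $h_{2n+1,1}$ are strictly below the minimum conformal weight $h_{2n+3,1}$ of $F_{n+1}$ and cannot appear there; any $h_{2n+3,1}$-component would be a scalar multiple of $e^{-(n+1)\a}$, which is not in $\mbox{Ker}(Q)$; and the $h_{2n+7,1}$-component is excluded by the explicit Feigin--Fuchs structure of $\mbox{Ker}(Q)\cap F_{n+1}$. Only the $h_{2n+5,1}$-component remains, placing $Y(a^-,x)Qe^{-(n+1)\a}$ in $W((x))$.

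For (ii) and (iii), I induct on $n\ge 1$ (the case $n=0$, i.e.\ $Qe^{-\a}\in V$, is handled separately by an explicit computation). The base $n=1$ is immediate: $GQe^{-2\a}=a^+\in V$ and $a^+\neq 0$. For the step, part (i) furnishes a leading-term identity
\[ a^-_{m_0}Qe^{-(n+1)\a}=c\cdot Qe^{-(n+2)\a},\qquad m_0=-(n-1)p-2,\ c\neq 0,\]
read off from the lowest-power coefficient of the $x$-expansion (which lands on the highest weight line of $W$). Applying $G^{n+1}$ and invoking Theorem \ref{G-der} (both $[G,Q]=0$ and the derivation property), together with the vanishings $G^ka^-=0$ for $k\ge 2$ (since $G^ka^-$ would sit in $M(1)\otimes e^{(2k-1)\a}$ at conformal weight $h_{5,1}$, strictly below that Fock's minimum) and $G^{n+1}Qe^{-(n+1)\a}=0$ (by the analogous comparison in $M(1)\otimes e^{(n+2)\a}$), the derivation formula collapses to
\[ c\,G^{n+1}Qe^{-(n+2)\a}=(n+1)\,a^+_{m_0}\,G^nQe^{-(n+1)\a}.\]
The right-hand side lies in $V$ by the inductive hypothesis, yielding (iii); for (ii) the same identity gives non-vanishing provided $a^+_{m_0}$ acts injectively on $G^nQe^{-(n+1)\a}$, which follows from a leading-term inspection of $a^+_{m_0}G^nQe^{-(n+1)\a}$ as a vector in $M(1)\otimes e^{(n+1)\a}$.

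The hardest parts will be, first, ruling out the $h_{2n+7,1}$ Virasoro component in (i): this goes beyond Theorem \ref{fr-p} alone and requires the socle/quotient structure of the Fock $Q$-complex around $F_{n+1}$. Second, verifying $c\ne 0$ together with the injectivity of $a^+_{m_0}$ reduces to a residue computation of $Y(e^{-2\a},x)Qe^{-(n+1)\a}$ via the iterate formula for lattice vertex operators, essentially a Schur polynomial identity in $M(1)$.
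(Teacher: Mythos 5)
Your overall strategy coincides with the paper's: part (i) via the fusion-rule constraint of Theorem \ref{fr-p} together with the fact that the image lies in $\mbox{Ker}\, Q$, and parts (ii)--(iii) via an induction driven by the derivation property of $G$ and the vanishings $G^k a^- = 0$ ($k \geq 2$), $G^{n+1}Qe^{-(n+1)\a}=0$; your leading-term index $m_0=-(n-1)p-2$ agrees with the paper's $j_0$. However, there are two genuine gaps, both at the non-vanishing claims, which are the real content here. First, excluding the $h_{2n+7,1}$-component in (i) is not merely a matter of "Feigin--Fuchs structure": the cosingular vector of that weight in $M(1)\otimes e^{-(n+1)\a}$ is $Ge^{-(n+3)\a}$, and the paper rules it out because $Q\,Ge^{-(n+3)\a}=GQe^{-(n+3)\a}\neq 0$. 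That non-vanishing is itself non-trivial; the paper derives it from the constant-term identity $H=GQe^{-3\a}\neq 0$ (Theorem \ref{CT}), then gets $a^+\neq 0$ from $G(a^-_{-2}a^-)=\mu_1 H$, and finally $GQe^{-(m+1)\a}\neq 0$ for all $m$ from the relation $e^{(m-1)\a}_{j_1}GQe^{-(m+1)\a}=(-1)^{m-1}a^+$. Your proposal never establishes any of these first-power-of-$G$ non-vanishings, and it also supplies no argument for the base case $a^+\neq 0$, which is exactly where Theorem \ref{CT} enters.

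Second, in the inductive step your reduction of (ii) to "injectivity of $a^+_{m_0}$ on $G^nQe^{-(n+1)\a}$, which follows from a leading-term inspection" is circular: by your own identity $c\,G^{n+1}Qe^{-(n+2)\a}=(n+1)\,a^+_{m_0}G^nQe^{-(n+1)\a}$, showing the right-hand side is nonzero is equivalent to the claim being proved, and a residue computation of $Y(e^{-2\a},x)Qe^{-(n+1)\a}$ only gives $c\neq 0$, not control of $a^+_{m_0}$ acting on the complicated vector $G^nQe^{-(n+1)\a}$. The paper sidesteps this entirely by invoking simplicity of $V_L$ (the result of \cite{DL} that $Y(u,z)v\neq 0$ for nonzero $u,v$): hence some mode $a^+_{k_0}$ acts non-trivially, and by part (i) and $[G,L(n)]=0$ the resulting nonzero vector lies in $U(Vir)G^{n+1}Qe^{-(n+2)\a}$, forcing $G^{n+1}Qe^{-(n+2)\a}\neq 0$. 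You should replace your injectivity claim with this simplicity argument.
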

{\em Proof.}
First we shall prove assertion (i).
Define $j_0 = - n
p + p-2$. Then we have
$$ a^- _ j Q e ^{- (n+1) \a} = 0 \quad \mbox{for} \ j > j_0, $$
$$ a ^- _{j_0} Q e^{ - (n+1) \a } = \mu_n  Q e ^{- (n+2) \a } \quad
( \mu_n \ne 0). $$
Let us now see that
\bea \label{rel-nt1} &&w_{n}:=G Q e ^{-(n+1) \a} \ne 0. \eea
In what follows we shall prove that $w_2=H= G Q e ^{-3 \alpha} \ne 0$ (cf. Theorem \ref{CT}).
Then the relation
$$ G( a ^- _{-2} a ^- ) = a ^+ _{-2} a ^- + a ^- _{-2} a ^+ = \mu _1 H$$
gives that $w_1 = a ^+ \ne 0$.
Our Theorem \ref{G-der-general} implies
\bea \label{rel-nt2} e ^{ (n-1) \alpha }_{j_1} w_n = (-1) ^{n-1} a ^+, \quad (j_1 = (n+1) (n-1) p -1),  \eea
so we have that $w_n \ne 0$ for every $ n \in {\N}$.

Assume that $ j \le j_0$. Then by using Proposition \ref{fr-p} we
see that
$a ^ - _j Q e ^{-(n+1) \a}$ lies in the Virasoro-submodule of
$M(1) \otimes e ^{-(n+1) \a}$ generated by (co)singular vectors of
weights $h_{2n+5,1}$ and $h_{2n+7,1}$. But cosingular vector of
weight $h_{2n+7,1}$ is proportional to $G e ^{-(n+3) \a} \notin
\overline{V_L}$ (since by (\ref{rel-nt1}) we have $w_{n+2}= Q G e ^{-(n+3) \a } \ne 0$).
Therefore $$a ^ - _j Q e ^{-(n+1) \a} \in U(Vir) Q e ^{-(n+2)
\a}.$$ In this way we have proved assertion (i).

 We shall prove the assertions (ii) and (iii) by induction on $n \in
{\Zp}$.

For $n=1$ we have already proved that $a ^{+}$ is a nontrivial
singular vector.

Assume now that assertions (ii)-(iii)  hold for certain $n \in
{\Zp}$. Since $V_{L} $ is a simple vertex operator algebra we have
that
$$ Y(a ^{+},z) G^{n} Q e^{-(n+1) \a}\ne 0,$$
(for the proof see \cite{DL}).
So there is $k_0 \in {\Z}$ such that
$$a ^+ _{k_0} G^{n} Q e^{-(n+1) \a}   \ne 0 \quad \mbox{and} \quad
 a ^+ _{j} G^{n} Q e^{-(n+1) \a}  = 0 \ \mbox{for} \ j > k_0.$$
Since
$$ a ^+ _{k_0} G^{n} Q e^{-(n+1) \a} = \nu_2 G^{n+1} (a ^{-} _{k_0}
 Q e^{-(n+1) \a} ), $$
for certain non-zero constant ${\nu}_2$, then by using assertion
(i) and the fact that $G$ is a screening operator we conclude that
$$ a ^+ _{k_0} G^{n} Q e^{-(n+1) \a}  \in U(Vir) G ^{n+1} Q e
^{-(n+2) \a}.$$
Therefore
$G ^{n+1} Q e ^{- (n+2) \a } \ne 0$ and
$$ G ^{n+1} Q   e ^{-(n+2) \a }  = \frac{1}{{\nu}_2 {\mu_n}} a ^+ _{j_0} G^{n} Q e^{-(n+1) \a} \in V.$$
The proof follows.
 \qed

By using Lemma \ref{non-triv} and the structure theory of
Feigin-Fuchs modules from \cite{Fel},  \cite{FF} and \cite{TK} one
can see that the following theorem holds for every $p \geq
3$:

 \begin{theorem} \label{str-ff-1}
As a Virasoro algebra module, $V_{L}$  is generated by the family
of (non-trivial) singular $\widetilde{Sing}$ vectors, two families of subsingular  vectors
$\widetilde{SSing}^{(1)} \cup \widetilde{SSing} ^{(2)} $, and a family of cosingular vectors $\widetilde{CSing} ^{(3)}$
 where
\bea &&  \widetilde{Sing} =  \{  u _{(j, n)},   \ \vert \ j, n \in
{\N}, \ 0 \le j \le n \} \nonumber \\
  && \widetilde{SSing}^{(1)} =  \{ w ^{(1)} _ {(j, n)},    \ \vert \ n \in {\Zp}, 0 \le j \le n \}
  \nonumber \\
&& \widetilde{SSing}^{(2)} =  \{ w ^{(2)} _ {(j, n)},    \ \vert \
n \in {\Zp}, 0 \le j \le n-1 \}
  \nonumber \\
&& \widetilde{CSing}^{(3)} =  \{ w ^{(3)} _ {(j, n)},    \ \vert \
n \in {\Zp}, 0 \le j \le n-1 \}.
  \nonumber
\eea
These vectors satisfy the following conditions:
\bea
&&  u _{(j, n)}= G ^{j} Q e ^{-(n+1) \a} \in M(1) \otimes e
^{(2 j-n) \a}, \nonumber \\
&&  w^{(1)} _{(j, n)}= G ^{j} e ^{-(n+1) \a} \in M(1) \otimes e
^{(2 j-n-1) \a}, \nonumber \\
&& w^{(2)} _ {j,n} \in M(1) \otimes e ^{(n - 2j-1) \a}, \ \quad
\quad    G ^j Q
w_{j,n}^{(2)} = e ^{n \a}, \nonumber \\
&& w^{(3)} _ {j,n} \in M(1) \otimes e ^{(n - 2j) \a}, \quad G ^j
w_{j,n}^{(3)} = e ^{n \a}. \nonumber
 \eea
\end{theorem}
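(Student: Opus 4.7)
The plan is to decompose $V_L = \bigoplus_{m \in \mathbb{Z}} M(1) \otimes e^{m\alpha}$ as a Virasoro module and apply the well known structure theory for each Feigin--Fuchs summand $M(1) \otimes e^{m\alpha}$ at central charge $c_{2,p}$ from \cite{Fel}, \cite{FF}, and \cite{TK}. For the $(2,p)$ series each such Feigin--Fuchs module has an explicit "braid" embedding diagram whose vertices come in exactly four families (one genuinely singular, two subsingular, and one cosingular), with weights of the form $h_{r,1}$ for appropriate $r$. The content of the theorem is that these four families can be enumerated by the pairs $(j,n)$ as stated, with the vectors realized concretely via $G$, $Q$, and powers of $e^{\pm\alpha}$.

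First I would compute the $\alpha$-grade and conformal weight of each candidate. By Theorem \ref{G-der}, $G$ preserves the $\alpha$-grade and commutes with the Virasoro action, while $Q = e^{\alpha}_0$ shifts $\alpha$-grade up by one. Thus $u_{(j,n)} = G^{j} Q e^{-(n+1)\alpha}$ lies in $M(1)\otimes e^{(2j-n)\alpha}$ with conformal weight matching the predicted singular vertex of weight $h_{2(n-j)+1,\, 1}$ (after the standard identification $\alpha$-grade $\leftrightarrow$ Feigin--Fuchs weight), and similarly for the three $w^{(i)}_{(j,n)}$. A direct conformal weight check shows that as $(j,n)$ ranges over the stated set, one hits each vertex of the embedding diagrams of all summands $M(1)\otimes e^{m\alpha}$ exactly once.

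Next I would verify the singular/subsingular/cosingular type of each family. For the $u_{(j,n)}$ the singular property follows from the inductive argument in Lemma \ref{non-triv}(ii)--(iii), together with the fact that $G$ and $Q$ commute with Virasoro: the nonvanishing established there together with the weight count forces these vectors to occupy the singular vertices of the diagram. The family $w^{(1)}_{(j,n)} = G^{j} e^{-(n+1)\alpha}$ is defined by an analogous procedure but starting from the tautologically singular $e^{-(n+1)\alpha}$ in a summand lying one $\alpha$-grade higher; the nonvanishing follows from a relation of type (\ref{rel-nt2}) and the subsingular character is then forced by weight. For $w^{(2)}_{(j,n)}$ and $w^{(3)}_{(j,n)}$ the defining relations $G^{j} Q w^{(2)}_{(j,n)} = e^{n\alpha}$ and $G^{j} w^{(3)}_{(j,n)} = e^{n\alpha}$ identify them as the preimages under the maps constructed via $G$ and $Q$; existence and uniqueness reduce to the surjectivity of these maps onto the relevant singular vertex, which is a contragredient reformulation of the nontriviality results of Lemma \ref{non-triv}. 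The intertwining operator analysis of Theorem \ref{fr-p} is what rules out stray vectors of weight $h_{2n+7,1}$ entering from cosingular contributions in the adjacent $\alpha$-grade, exactly as in the proof of Lemma \ref{non-triv}(i).

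The main obstacle, and the step requiring real care, is the exhaustiveness claim: that Virasoro acting on $\widetilde{Sing}\cup\widetilde{SSing}^{(1)}\cup\widetilde{SSing}^{(2)}\cup\widetilde{CSing}^{(3)}$ really spans $V_L$. I would settle this by a character count: for each $m$, the graded character of the Virasoro submodule cogenerated by the relevant subset of our four families inside $M(1)\otimes e^{m\alpha}$ can be written as an alternating sum of Verma characters, using the known composition series of Feigin--Fuchs modules. An Euler--Poincar\'e argument, combined with Felder's BGG-type complex realized by the screenings $Q$ and $\widetilde{Q}$, shows this alternating sum equals $\operatorname{ch} M(1)\otimes e^{m\alpha}$. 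Summing over $m$ then gives the theorem.
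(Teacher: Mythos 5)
Your proposal follows essentially the same route the paper takes: the paper's entire justification for Theorem \ref{str-ff-1} is the one sentence preceding it, invoking Lemma \ref{non-triv} together with the Feigin--Fuchs structure theory of \cite{Fel}, \cite{FF}, \cite{TK}. Your outline (decompose $V_L$ into the summands $M(1)\otimes e^{m\alpha}$, match the four families to the vertices of the known embedding diagrams, use Lemma \ref{non-triv} for nonvanishing, and close with a character count for exhaustiveness) is a reasonable expansion of exactly that sketch; the character-count step is the part the paper leaves entirely implicit, and your Euler--Poincar\'e plan via the known composition series is the natural way to make it precise.

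Two details in your write-up are wrong as stated and should be corrected, although neither derails the argument. First, $G=\sum_{i\ge 1}\tfrac{1}{i}\,e^{\alpha}_{-i}e^{\alpha}_{i}$ does \emph{not} preserve the $\alpha$-grade: each mode $e^{\alpha}_{i}$ raises the charge by $\alpha$, so $G$ raises it by $2\alpha$ (what it does preserve is the conformal weight, since $[L(0),G]=0$ by Theorem \ref{G-der}). This is the only way your own conclusion $u_{(j,n)}=G^{j}Qe^{-(n+1)\alpha}\in M(1)\otimes e^{(2j-n)\alpha}$ can hold; with ``$G$ preserves the $\alpha$-grade'' you would land in $M(1)\otimes e^{-n\alpha}$ for every $j$. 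Second, since $Q$ and $G$ both commute with $L(0)$, the conformal weight of $u_{(j,n)}$ equals that of $e^{-(n+1)\alpha}$, namely $h_{2n+3,1}=\tfrac{(n+1)(pn+2p-2)}{2}$, \emph{independent} of $j$; it is not $h_{2(n-j)+1,1}$. The index $j$ labels which of the $n+1$ charge sectors (equivalently, which copy of $L^{Vir}(c_{2,p},h_{2n+3,1})$ in the multiplicity space) the vector sits in, not a shift of weight. With these two points repaired, your placement and type-verification arguments go through as intended.
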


\begin{remark}
The main novelty of our approach is in the fact that we use
the screening operator $G$ for which we have explicit formulae in the
context of vertex operator algebras. Screening operators from
\cite{Fel} and \cite{TK} are defined by using (complex) contour integrals.
In the above case these two approaches are equivalent.
\end{remark}

We should mention here that $G ^{j} e ^{-(n+1) \a} \in
\widetilde{SSing}^{(1)}$ become singular in $V_L / {\rm Ker} \ Q$.
But these vectors are not annihilated by $Q$. So we get

%Since operators $Q,  \widetilde{Q}$ and $G$ mutually commute we have
%that only singular vectors $u _{(j, n)}$ are annihilated by both $Q$
%and $\widetilde{Q}$. We get:

 \begin{proposition} We have
\item[(i)]As a Virasoro module, $\overline{V_L}$ is generated by the vacuum
vector ${\bf 1}$ and the family of singular vectors
$$ \{ G ^j Q e ^{-(n+1) \a} \ \vert \ j \in \N,  n \in {\Zp},  \ j \le n \}.$$
Moreover,
$$ \overline{V_L} = V^{Vir} (c_{2,p},0) \bigoplus \bigoplus_{n=1 }^{\infty}
(n+1) L^{Vir}(c_{2,p},\frac{(n+1)(pn+2p-2)}{2}). $$

\item[(ii)] We have $V=\overline{V_L}$, i.e., the vertex algebra $V=\overline{V_L}$ is strongly  generated by
${\bf 1}$, $\omega$ and
$$ a^- = Q e^{-2 \a} \quad \mbox{and} \quad a^{+} = G a ^{-} . $$
 \end{proposition}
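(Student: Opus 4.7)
My plan is to prove (i) by intersecting the Feigin--Fuchs decomposition of $V_L$ with $\mathrm{Ker}\,Q\cap\mathrm{Ker}\,\widetilde{Q}$, and then to deduce (ii) from Lemma \ref{non-triv} together with a small mode-product computation that locates $\omega$ inside $V$.

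For (i), I would start from Theorem \ref{str-ff-1}, which lists the Virasoro ``highest-weight-like'' generators of $V_L$: the singular vectors $u_{(j,n)}=G^{j}Qe^{-(n+1)\a}$ together with the three families of sub/cosingular vectors $w^{(1)}_{(j,n)},w^{(2)}_{(j,n)},w^{(3)}_{(j,n)}$. The first step is to check that every $u_{(j,n)}$ lies in $\overline{V_L}$: since $Q^{2}=0$ and $[G,Q]=0$ we get $Qu_{(j,n)}=G^{j}Q^{2}e^{-(n+1)\a}=0$, while $\widetilde{Q}$ commutes with both $Q$ and $G$, so it is enough to observe $\widetilde{Q}e^{-(n+1)\a}=0$; this follows from $\langle -\tfrac{2}{p}\a,-(n+1)\a\rangle=2(n+1)$, which forces $Y(e^{-2\a/p},z)e^{-(n+1)\a}\in z^{2(n+1)}\mathbb{C}[[z]]$. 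The second step is to exclude each of the sub/cosingular families from $\overline{V_L}$: $Qw^{(1)}_{(j,n)}=u_{(j,n)}\ne 0$; $G^{j}Qw^{(2)}_{(j,n)}=e^{n\a}\ne 0$ forces $Qw^{(2)}_{(j,n)}\ne 0$; and $G^{j}\widetilde{Q}w^{(3)}_{(j,n)}=\widetilde{Q}e^{n\a}\ne 0$ (an analogous lattice check for $n\ge 1$) forces $\widetilde{Q}w^{(3)}_{(j,n)}\ne 0$. The distinguished vector $u_{(0,0)}=Qe^{-\a}$, although primary in $V_L$, already sits inside $U(Vir).\mathbf{1}$ as the singular vector of $V^{Vir}(c_{2,p},0)$ at level $p-1$, so the ``new'' Virasoro generators of $\overline{V_L}$ beyond $\mathbf{1}$ are precisely the $u_{(j,n)}$ with $n\ge 1$, $0\le j\le n$. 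Each such $u_{(j,n)}$ has conformal weight $h_{2n+3,1}=\tfrac{(n+1)(pn+2p-2)}{2}$ and Heisenberg charge $(2j-n)\a$; for fixed $n$ the $n+1$ vectors $u_{(0,n)},\dots,u_{(n,n)}$ sit in distinct charge sectors, so their Virasoro submodules are automatically in direct sum, and each is irreducible because any further singular descendant would again be one of the primaries listed above but would instead occupy a different direct summand. Assembling the pieces yields the stated decomposition.

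For (ii), Lemma \ref{non-triv}(iii) already provides $u_{(n,n)}=G^{n}Qe^{-(n+1)\a}\in V$ for every $n\ge 0$, hence $a^{-}=u_{(0,1)}$ and $a^{+}=u_{(1,1)}$ lie in $V$. To reach the remaining $u_{(j,n)}$ I would proceed in two stages. First, the identity $a^{-}_{j_{0}}u_{(0,n)}=\mu_{n}u_{(0,n+1)}$ with $j_{0}=-np+p-2$ and $\mu_{n}\ne 0$ (extracted from the proof of Lemma \ref{non-triv}(i)) gives $u_{(0,n)}\in V$ for every $n\ge 1$ by induction on $n$, starting from $a^{-}$. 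Second, the derivation $G$ of Theorem \ref{G-der}(iii) preserves $V$: $Ga^{-}=a^{+}\in V$, while $Ga^{+}=G^{2}Qe^{-2\a}$ vanishes because its prospective home $M(1)\otimes e^{3\a}$ has lowest conformal weight $3p+3$, strictly above $h_{1}=3p-2$. Consequently $u_{(j,n)}=G^{j}u_{(0,n)}\in V$ for all $0\le j\le n$. To place $\omega$ in $V$, observe that by (i) the charge-$0$, conformal-weight-$2$ subspace of $\overline{V_L}$ is one-dimensional and spanned by $\omega$ (the lowest weight of any summand $L^{Vir}(c_{2,p},h_{2n+3,1})$ is $h_{1}=3p-2\ge 7$), so producing any nonzero charge-$0$, weight-$2$ vector in $V$ suffices. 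The natural candidate is $a^{-}_{6p-7}a^{+}$ (weight $2h_{1}-(6p-7)-1=2$ and charge $-\a+\a=0$), whose non-vanishing can be checked by a direct lattice computation and which must then be a nonzero multiple of $\omega$. Once $\omega\in V$, the subalgebra $V$ is closed under the full Virasoro action, so $V\supseteq U(Vir).u_{(j,n)}$ for every $(j,n)$, and combining with (i) yields $V=\overline{V_L}$; strong generation by $\mathbf{1},\omega,a^{-},a^{+}$ is then immediate.

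The main obstacle is the Feigin--Fuchs bookkeeping in (i)---both the clean exclusion of the three families $w^{(1)},w^{(2)},w^{(3)}$ from $\overline{V_L}$ and the irreducibility of each $U(Vir).u_{(j,n)}$ inside $\overline{V_L}$ rely on a careful case analysis of the Fock-module structure and its singular/cosingular chains. The explicit lattice calculation anchoring $\omega\propto a^{-}_{6p-7}a^{+}$ in (ii) is concrete but conceptually much easier.
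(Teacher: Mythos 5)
Your proposal is correct and follows essentially the same route as the paper: part (i) is exactly the paper's argument (intersect the generator list of Theorem \ref{str-ff-1} with $\mathrm{Ker}\,Q\cap\mathrm{Ker}\,\widetilde{Q}$, using that $Q$, $\widetilde{Q}$, $G$ commute and the lattice pairings $\widetilde{Q}e^{-n\a}=0$, $\widetilde{Q}e^{n\a}\neq 0$), and part (ii) rests on Lemma \ref{non-triv} just as in the paper, which simply says it ``easily follows'' from that lemma. Your extra steps for (ii) --- $G$-stability of $V$ via $Ga^{+}=0$, and locating $\omega$ as a nonzero multiple of $a^{-}_{6p-7}a^{+}$ using the one-dimensionality of the weight-$2$ subspace of $\overline{V_L}$ --- are a legitimate filling-in of that gap, with the single unverified claim ($a^{-}_{6p-7}a^{+}\neq 0$) being of the same computational nature as facts the paper asserts elsewhere (e.g.\ Proposition \ref{korisna-pr}).
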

\begin{proof}
First we notice that
$$ Q e ^{-n \a}\ne 0, \widetilde{Q} e ^{- n \a} =0, \quad \widetilde{Q} e ^{n \a} \ne  0,  \quad ( n \in \Zp).$$
Since the operators $Q,  \widetilde{Q}$ and $G$ mutually commute we
have that only singular vectors $u _{(j, n)}$ are annihilated by
both $Q$ and $\widetilde{Q}$. This proves assertion (i). The
assertion (ii) easily follows from Lemma \ref{non-triv}.
\end{proof}

 Define the following (nonzero)  vectors in the vertex algebra $V_D$:
 $$ F= Q e ^{-3 \a}, \quad H  = G F, \quad E = G ^2 F. $$
 Again, it is clear that $F$ (and $H$ and $E$) are inside $\WW_{2,p}$.

In the same way as above (see also \cite{AdM-triplet}) we get the
following result.

\begin{proposition}
\item[(i)]As a Virasoro module, $\WW_{2,p}$ is generated by the  vacuum
vector ${\bf 1}$ and the family of singular vectors
$$ \{ G ^j Q e ^{-(2n+1) \a} \ \vert \ j \in \N,  n \in {\Zp},  \ j \le 2 n \}.$$
Moreover,
$$ \WW_{2,p} = V^{Vir} (c_{2,p},0) \bigoplus \bigoplus_{n=1 }^{\infty}
(2 n+1) L^{Vir}(c_{2,p},(2n+1)(pn+p-1)). $$

\item[(ii)] As a vertex algebra, $\WW_{2,p}$ is strongly  generated by
${\bf 1}, \omega,  E, F, H $
 \end{proposition}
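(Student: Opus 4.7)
The proof follows the same pattern as the preceding Proposition. For part (i), I intersect the Virasoro-module decomposition of $\overline{V_L}$ with the subalgebra $V_D = M(1) \otimes \mathbb{C}[2\mathbb{Z}\alpha]$. Each singular vector $u_{(j,n)} = G^j Q e^{-(n+1)\alpha}$ of Theorem \ref{str-ff-1} lies in the $\alpha$-charge component $M(1) \otimes e^{(2j-n)\alpha}$, so it belongs to $V_D$ if and only if $n$ is even. Writing $n = 2m$ with $0 \leq j \leq 2m$ produces $2m+1$ singular vectors of conformal weight $\frac{(2m+1)(2pm+2p-2)}{2} = (2m+1)(pm+p-1)$; relabeling $m \mapsto n$ yields the Virasoro-module decomposition asserted in (i).

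For (ii), let $V \subseteq \WW_{2,p}$ be the vertex subalgebra generated by $\{{\bf 1}, \omega, E, F, H\}$. By Theorem \ref{G-der}(iii), $G$ is a derivation of $\WW_{2,p}$, and it sends the generating set into itself: one computes $G\omega = 0$, $GF = H$, $GH = E$, while $GE = 0$ because a nonzero $G^3 Q e^{-3\alpha}$ would be a Virasoro highest-weight vector in $\overline{V_L}$ at an $\alpha$-charge outside the range $0 \leq j \leq n$ prescribed by Theorem \ref{str-ff-1}. Hence $G$ preserves $V$. Since $G \cdot u_{(j, 2n)} = u_{(j+1, 2n)}$ for $0 \leq j \leq 2n - 1$, producing every singular vector $u_{(j, 2n)}$ inside $V$ reduces to producing just the ``bottom'' singular vector $u_{(0, 2n)} = Q e^{-(2n+1)\alpha}$ for each $n \geq 1$. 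I argue this by induction on $n$; the base case $n = 1$ is immediate since $u_{(0, 2)} = F$. For the inductive step, following the strategy of Lemma \ref{non-triv}, I take the leading mode $F_{k_0}$ of $Y(F, z)$ (with $k_0$ determined by matching the conformal weight of $F_{k_0} u_{(0, 2n)}$ with that of $u_{(0, 2n+2)}$) and show that $F_{k_0} u_{(0, 2n)}$ equals a nonzero scalar multiple of $u_{(0, 2n+2)}$ modulo Virasoro descendants; since $\omega \in V$, these descendants also lie in $V$, and the induction closes.

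The main technical obstacle is establishing this leading-mode identity. Showing that $F_{k_0} u_{(0, 2n)}$ lies in the Virasoro submodule generated by $u_{(0, 2n+2)}$, rather than in a spurious cosingular component, requires fusion-rule restrictions analogous to Proposition \ref{fr-p}; any cosingular-type contribution at the relevant $\alpha$-charge would be of the form $G^k e^{(2n+2-2k)\alpha}$, which fails to be annihilated by $Q$ and therefore lies outside $\overline{V_L}$. Nonvanishing of the scalar is a direct lattice-vertex-algebra calculation extracting the top power of $z$ in $Y(F, z) Q e^{-(2n+1)\alpha}$, paralleling the identity $a^-_{j_0} Q e^{-(n+1)\alpha} = \mu_n Q e^{-(n+2)\alpha}$ proved in the course of Lemma \ref{non-triv}(i).
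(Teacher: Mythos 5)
Your argument is correct and follows essentially the same route as the paper, which obtains (i) by restricting the Feigin--Fuchs decomposition of $\overline{V_L}$ to the even-charge subalgebra $V_D$ (keeping exactly the $u_{(j,n)}$ with $n=2m$ even, giving the $2m+1$ singular vectors of weight $(2m+1)(pm+p-1)$) and (ii) by the inductive leading-mode argument of Lemma \ref{non-triv}: apply the top nonzero mode of $F$ to $Qe^{-(2n+1)\a}$, exclude the cosingular contribution because it is not $Q$-closed, and propagate with the derivation $G$. The only slip is cosmetic: the putative cosingular obstruction in the charge sector $M(1)\otimes e^{-(2n+2)\a}$ should read $G^{k}e^{-(2n+2+2k)\a}$ rather than $G^{k}e^{(2n+2-2k)\a}$, but the conclusion that it fails to be annihilated by $Q$ and hence lies outside $\overline{V_L}$ is unaffected.
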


 The structure of $\WW_{2,p}$  and $\overline{V_L}$ can be visualized via the embedding diagram for the Virasoro module $V_L$ (cf. \cite{Fel}, \cite{FF}):
 $$
 \xymatrix@!0{& & & &     & &  &  &       &  & \times  \ar[dr] &  &       & & & &    & & & & \\
 & & & &      &  & \bigtriangleup  \ar[dr] &  &       &   \bigtriangleup  \ar[ur]  \ar[rrd]  \ar[d] & &  \bullet &          &  & \bullet  \ar[dr] &  &      & & & &  \\
  & & \circ  \ar[dr] &  &    &   \circ  \ar[ur]  \ar[rrd]  \ar[d] & &  \Box   \ar@{.>}@/^/[rrrrrrrr]^G &         & \circ \ar[rru] \ar[rrd] &  & \Box \ar[u] \ar[d] &
  &   \circ  \ar[ur]  \ar[rrd]  \ar[d] & &  \Box  &        & & \Box  \ar[dr] &  &    \\
   &   \bigtriangleup  \ar[ur]  \ar[rrd]  \ar[d] & &  \bullet   \ar@{.>}@/^/[rrrrrrrr]^G  &        & \bigtriangleup \ar[rru] \ar[rrd] &  & \bullet \ar[u] \ar[d] &
   &   \bigtriangleup  \ar[u]  \ar[rru]  \ar[rrd]  \ar[d] & &  \bullet  \ar@{.>}@/^/[rrrrrrrr]^G &        & \bigtriangleup \ar[rru] \ar[rrd] &  & \bullet \ar[u] \ar[d] &
      &   \bigtriangleup  \ar[ur]  \ar[rrd]  \ar[d] & &  \bullet & \\
 & \circ \ar[rru] \ar[rrd] &  & \Box \ar[u] \ar[d] &       &   \circ  \ar[u]  \ar[urr]  \ar[rrd]  \ar[d] & &  \Box &
  &  \circ \ar[rru] \ar[drr] &  & \ar[u] \Box \ar[d] &     &   \circ  \ar[u]  \ar[urr] \ar[rrd]  \ar[d] & &  \Box &
   & \circ \ar[rru] \ar[rrd] &  & \Box \ar[u] \ar[d] & \\
  & \bigtriangleup \ar[urr]  \ar[u]  \ar@{.}[d] &  & \bullet  \ar@{.}[d] &
   &  \bigtriangleup \ar[rru] \ar@{.}[d] &  & \ar[u] \bullet \ar@{.}[d] &
   & \bigtriangleup \ar[urr]  \ar[u]  \ar@{.}[d] &  & \bullet \ar@{.}[d]  &
   &  \bigtriangleup \ar[rru] \ar@{.}[d] &  & \ar[u] \bullet \ar@{.}[d] &
  & \bigtriangleup \ar[urr]  \ar[u]  \ar@{.}[d] &  & \bullet  \ar@{.}[d] &  \\
 &&&&  & & & &  & & & &   & & & &   & & & &
 }
 $$
% & &  &  &          &   \bigtriangleup  \ar[ur]  \ar[rrd]  \ar[d] & &  \bullet &       &      \bigtriangleup  \ar[ur]  \ar[rrd]  \ar[d] & &  \bullet &        &
%\bigtriangleup  \ar[ur]  \ar[rrd]  \ar[d] & &  \bullet &         & & & & \\
% & & \circ  \ar[dr] &  &   & \circ \ar[rru] \ar[rrd] &  & \Box \ar[u] \ar[d] &   &
%&  \Box  \ar[dr] &  &
%\\
%& \bigtriangleup  \ar[ur]  \ar[rrd]  \ar[d]   &  &  \bullet  \ar@{.>}@/^/[rrrr]^G &
%& \bigtriangleup \ar[urr] \ar[drr] \ar[d]  \ar[u] &  & \bullet  \ar@{.>}@/^/[rrrr]^G  &
%& \bigtriangleup  \ar[ur]  \ar[rrd]  \ar[d] & &  \bullet &
%\\
%... & \circ \ar[rru] \ar[rrd] &  & \Box \ar[u] \ar[d] &
%&  \circ \ar[rru] \ar[drr] &  & \ar[u] \Box \ar[d] &
%& \circ \ar[rru] \ar[rrd] &  & \Box \ar[u] \ar[d] & ...
%\\
%... & \bigtriangleup \ar[urr]  \ar[u]  \ar@{.}[d] &  & \bullet  \ar@{.}[d] &
% & \bigtriangleup \ar[urr]  \ar[u]  \ar@{.}[d] &  & \bullet \ar@{.}[d]  &
%& \bigtriangleup \ar[urr]  \ar[u]  \ar@{.}[d] &  & \bullet  \ar@{.}[d] & ... \\
%... & & & &  & & & &  & & &  & ...
%}
%$$
The left-most "ladder" is $M(1) \otimes e^{-2 \alpha} \in V_D$, followed by $M(1) \otimes e^{-\alpha} \in V_{D + \alpha}$, the middle module is $M(1) \in V_D$, $M(1)
\otimes e^{\alpha} \in V_{D+\alpha}$ and $M(1) \otimes e^{2 \alpha} \in V_D$.  Singular vectors in $V_D$  are denoted by $\bullet$, whereas in $V_{D+\alpha}$ singular
vectors  are denoted by $\Box$.  These vectors account for $\widetilde{Sing}$ set introduced earlier.  Similarly,  vectors denoted by $\circ$  in $V_{D}$ and
$\bigtriangleup \in V_{D+\alpha}$ give the set $\widetilde{SSing}^{(1)}$. Dotted arrows indicate the action of the $G$-screening.

\section{Singlet vertex algebra $\overline{M(1)}$ }

In this section we shall study representation theory of the
singlet vertex algebra, which is a subalgebra of $\WW_{2,p}$. The
results from this section will be used in the proof of
$C_2$--cofiniteness of $\WW_{2,p}$.

 As in \cite{A-2003} and \cite{AdM-2007} we have the singlet vertex
operator algebra
$$ \overline{M(1)} = \mbox{Ker}_{M(1)}  Q \cap \mbox{Ker}_{M(1)}
\widetilde{Q},$$
which is a subalgebra of
$\WW_{2,p}$. The structure of $M(1)$ as a module for the
Virasoro algebra can be described by using Theorem \ref{str-ff-1}.
We have the following result which is analogous to the results
obtained in \cite{A-2003} in the case of $(1,p)$ Virasoro modules.

\begin{proposition}
\item[(i)]As a Virasoro module, $ \overline{M(1)}$ is generated by the  vacuum
vector ${\bf 1}$ and the family of singular vectors
$$ \{ G ^n Q e ^{-(2n+1) \a} \ \vert \  n \in {\Zp} \}.$$
Moreover,
$$ \overline{M(1)} = V^{Vir} (c_{2,p},0) \bigoplus \bigoplus_{n=1 }^{\infty}
 L^{Vir}(c_{2,p},(2n+1)(pn+p-1)). $$

\item[(ii)] As a vertex algebra, $ \overline{M(1)}$ is strongly generated by
${\bf 1}$, $\omega$  and $H $.
 \end{proposition}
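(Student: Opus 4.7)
The plan is to derive both parts from Theorem \ref{str-ff-1} by restricting the Virasoro-module structure of $V_L$ to its zero $\alpha$-charge component $M(1)$ and then intersecting with $\ker Q \cap \ker \widetilde{Q}$.

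For part (i), since $V_L = \bigoplus_{k \in \mathbb{Z}} M(1) \otimes e^{k\alpha}$ is $\alpha(0)$-graded and the grading is respected by the Virasoro action, the singular/subsingular/cosingular vectors of $V_L$ that lie inside $M(1)$ are exactly the charge-zero ones among the families of Theorem \ref{str-ff-1}. Among the singular family, $u_{(j,n)} = G^j Q e^{-(n+1)\alpha}$ has $\alpha$-charge $2j-n$, so charge zero forces $n = 2j$, leaving the vectors $G^j Q e^{-(2j+1)\alpha}$. For $j=0$ this is $Q e^{-\alpha}$, a singular vector of conformal weight $h_{3,1}=p-1$; a direct calculation using $\omega = \frac{1}{2p}(\alpha(-1)^2+(p-2)\alpha(-2))$ shows it is proportional to the first (Kac) singular vector of $M^{Vir}(c_{2,p},0)$ inside $M(1)$, hence already belongs to $V^{Vir}(c_{2,p},0)$. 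For $j \geq 1$ the vectors $G^j Q e^{-(2j+1)\alpha}$ produce new summands. Exactly as in the proof of the preceding proposition, the subsingular family $\widetilde{SSing}^{(1)}$ is annihilated by $\widetilde{Q}$ but not by $Q$ (since $[Q,G]=0$ and $Q e^{-(n+1)\alpha} \neq 0$), while $\widetilde{SSing}^{(2)}$ and $\widetilde{CSing}^{(3)}$ are killed by $Q$ but not by $\widetilde{Q}$; therefore none of them lie in $\overline{M(1)}$. Computing the $L(0)$-weight of $e^{-(2n+1)\alpha}$ gives $(2n+1)(pn+p-1)$, and these weights are pairwise distinct for $n \geq 1$, yielding multiplicity one and the claimed direct sum decomposition.

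For part (ii), let $U \subseteq \overline{M(1)}$ denote the vertex subalgebra strongly generated by ${\bf 1}$, $\omega$ and $H$. By part (i) and closure under the Virasoro action, it suffices to show that every singular vector $u_n := G^n Q e^{-(2n+1)\alpha}$ belongs to $U$, which I would prove by induction on $n$, the cases $n=0,1$ being immediate. For the inductive step, following the template of Lemma \ref{non-triv} and \cite{A-2003}, the strategy is to identify an integer $k_n$ such that $H_{k_n} u_n$ is a nonzero scalar multiple of $u_{n+1}$ modulo Virasoro descendants of the lower singular vectors. Writing $H = G(Q e^{-3\alpha})$ and using the derivation property of $G$ from Theorem \ref{G-der-general}(ii), one rewrites $H_{k_n} u_n$ as $G^{n+1}$ applied to a residue of $Y(Q e^{-3\alpha},z)(Q e^{-(2n+1)\alpha})$; by the residue analysis of Lemma \ref{non-triv}(i), the leading term of this expression is a nonzero multiple of $Q e^{-(2n+3)\alpha}$, and applying $G^{n+1}$ produces $u_{n+1}$.

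The main obstacle is verifying the non-vanishing in the inductive step. Concretely, at the critical mode $k_n$ one must exclude cancellation with contributions coming from Virasoro descendants of $u_0,\ldots,u_{n-1}$ in the same $L(0)$-weight space, and then rule out that $G^{n+1}$ annihilates the leading term. The first point is controlled by an $L(0)$-weight count combined with the simplicity of $V_L$ (ensuring $Y(H,z) u_n \neq 0$, cf.\ \cite{DL}), which forces the top-weight component of $H_{k_n} u_n$ to lie in the $L^{Vir}(c_{2,p},(2n+3)(pn+2p-1))$-summand isolated in part (i); the second is exactly Lemma \ref{non-triv}(ii), which asserts $G^{n+1} Q e^{-(2n+3)\alpha} \neq 0$. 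Once these two non-vanishings are confirmed, the induction closes and strong generation of $\overline{M(1)}$ by $\{{\bf 1}, \omega, H\}$ follows.
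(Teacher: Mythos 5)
Your proposal follows essentially the same route the paper takes (the paper in fact gives no explicit proof, deferring part (i) to the restriction of Theorem \ref{str-ff-1} to the charge-zero sector $M(1)$ and part (ii) to the inductive argument of Lemma \ref{non-triv} and the analogous results in \cite{A-2003}, with the key computation $H_{-3p-2}H = C\,G^2Qe^{-5\alpha}+U(Vir)\mathbf{1}$ recorded in Proposition \ref{korisna-pr}). One small inaccuracy: the charge-zero members of $\widetilde{SSing}^{(2)}$ and $\widetilde{CSing}^{(3)}$ are excluded because $G^jQw^{(2)}_{j,n}=G^jw^{(3)}_{j,n}=e^{n\alpha}$ forces $\widetilde{Q}w^{(2)}_{j,n}\neq 0$ and $\widetilde{Q}w^{(3)}_{j,n}\neq 0$ (indeed $w^{(2)}_{j,n}$ is not even killed by $Q$), but this does not affect your conclusion.
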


As in \cite{A-2003} and \cite{AdM-triplet} we have the following useful proposition:

\begin{proposition} \label{korisna-pr} We have
\item[(i)] $V ^{Vir} (c_{p,2},0) \cong \mbox{Ker}_{M(1)} G. $
\item[(ii)] $H_i H \in U( Vir). {\bf 1} \quad \mbox{ for every} \  i \ge -3p-1.$
\item[(iii)] $H_{-3p -2} H = C G ^2 Q e ^{-5 \a} + U(Vir) .{\bf 1} \quad (C \ne 0). $
\end{proposition}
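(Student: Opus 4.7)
My plan for Proposition \ref{korisna-pr} is to use the Virasoro-module decomposition of $\overline{M(1)}$ from the previous proposition,
$$\overline{M(1)} = V^{Vir}(c_{2,p},0) \oplus \bigoplus_{n \ge 1} L^{Vir}(c_{2,p}, (2n+1)(pn+p-1)),$$
as the backbone for all three parts.

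For part (i), I would first observe that $G\mathbf{1}=0$ (since $e^{\alpha}_i\mathbf{1}=0$ for $i\ge 0$), and by Theorem \ref{G-der-general}(i) $G$ commutes with the Virasoro action, so $G$ annihilates the entire submodule $U(Vir).\mathbf{1}=V^{Vir}(c_{2,p},0)$. On each simple summand $L^{Vir}(c_{2,p},(2n+1)(pn+p-1))$ the generating singular vector is $u_{(n,2n)}=G^n Qe^{-(2n+1)\alpha}$, and Theorem \ref{str-ff-1} gives $G\cdot u_{(n,2n)} = u_{(n+1,2n)}\neq 0$ since $n+1\le 2n$ for $n\ge 1$. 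Because each $L^{Vir}$-summand is irreducible and $G$ intertwines Virasoro action, $G$ is injective on each such summand. This identifies the kernel of $G$ on $\overline{M(1)}$ with $V^{Vir}(c_{2,p},0)$.

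For part (ii), the key input is a weight count. Since $H$ has conformal weight $6p-3$, the mode $H_i H$ has weight $12p-7-i$, which for $i\ge -3p-1$ is at most $15p-6<15p-5=(2\cdot 2+1)(p\cdot 2+p-1)$. Thus $H_iH$ has zero projection onto every $L^{Vir}$-summand with $n\ge 2$, and so $H_iH$ lies in $V^{Vir}(c_{2,p},0)\oplus L^{Vir}(c_{2,p},6p-3)$. To eliminate the $L^{Vir}(c_{2,p},6p-3)$ component, I would invoke part (i): membership in $U(Vir).\mathbf{1}$ is equivalent to lying in the kernel of $G$, and we can compute $G(H_iH)=(GH)_iH+H_i(GH)=E_iH+H_iE$ using that $G$ is a derivation on $\overline{V_L}$ (Theorem \ref{G-der}). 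The vanishing of this charge-$2$ expression follows from the fusion-rule analysis (Theorem \ref{fr-p} and its iterations) applied to $L^{Vir}(c_{2,p},h_{7,1})\otimes L^{Vir}(c_{2,p},h_{7,1})$: of the allowed highest weights $h_{r,1}$ with $r\in\{1,3,5,7,9,11,13\}$, only $h=0,\,6p-3,\,15p-5$ actually appear as top weights of summands of $\overline{M(1)}$, and combined with the weight bound only the Virasoro-vacuum contribution survives.

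For part (iii), the weight of $H_{-3p-2}H$ is exactly $15p-5$, so its $L^{Vir}(c_{2,p},15p-5)$-component sits in the one-dimensional top of that simple module and must equal $C\cdot G^2Qe^{-5\alpha}$ for some scalar $C$; the same fusion/$G$-kernel argument as in (ii) shows the $L^{Vir}(c_{2,p},6p-3)$ component still vanishes. The nontrivial assertion is $C\neq 0$. I would verify this by explicit projection onto the highest-weight line: using $H=G\,Qe^{-3\alpha}$ inside the free-field realization, one writes $H_{-3p-2}H$ as a residue of $Y(e^\alpha,\cdot)$'s composed appropriately and isolates the $e^{-5\alpha}$-charge contribution (the image will automatically be annihilated by all $L(n)$, $n\ge 1$, after reducing modulo $V^{Vir}(c_{2,p},0)$, and then equals a scalar times $G^2Qe^{-5\alpha}$). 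The main obstacle is precisely this explicit nonvanishing computation: it reduces to tracking an OPE coefficient through several layers of free-field Wick contractions, and is the computational heart of the argument, directly analogous to the computations carried out in \cite{A-2003} and \cite{AdM-triplet}.
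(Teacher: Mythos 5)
The paper offers no proof of this proposition (it is simply quoted ``as in \cite{A-2003} and \cite{AdM-triplet}''), so your argument must stand on its own. Much of it does: your proof of (i) via the decomposition of $\overline{M(1)}$ and the injectivity of $G$ on each simple summand is sound (though note the statement reads $\mbox{Ker}_{M(1)}G$, and $M(1)$ is strictly larger than $\overline{M(1)}$ --- it also contains the subsingular and cosingular families of Theorem \ref{str-ff-1}, about which you say nothing; for the way (i) is used in (ii) and (iii) the restriction to $\overline{M(1)}$ suffices), and the weight count showing $H_iH\in V^{Vir}(c_{2,p},0)\oplus L^{Vir}(c_{2,p},6p-3)$ for $i\ge -3p-1$ is correct.

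The genuine gap is your elimination of the $L^{Vir}(c_{2,p},6p-3)$-component in (ii). You correctly reduce to showing $G(H_iH)=E_iH+H_iE=0$, but the justification offered --- fusion rules for $L(h_{7,1})\times L(h_{7,1})$ together with the weight bound --- does not deliver this: the diagonal channel $h_{7,1}=6p-3$ is \emph{allowed} by the fusion constraint (the degree-$7$ analogue of Theorem \ref{fr-p} permits $h_{r,1}$ for $r=1,3,\dots,13$, including $r=7$), it \emph{does} occur as a summand of $\overline{M(1)}$, and $6p-3\le 12p-7-i$ throughout the range $i\ge -3p-1$. Since $G$ is injective on $U(Vir).H$, the claim $E_iH+H_iE=0$ is exactly equivalent to the vanishing you are trying to establish, so as written the step is circular. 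A concrete repair --- and the place where the bound $-3p-1$ actually enters --- is to first prove $F_iF=0$ for $i\ge -3p-1$: one has $e^{-3\a}_{m}F=0$ for $m\ge -3p-1$ because $\wt(e^{-3\a}_mF)=12p-7-m<15p-5=\wt(e^{-5\a})$, hence by skew-symmetry $F_me^{-3\a}=0$ in the same range, and $F_mF=Q(F_me^{-3\a})$. Since moreover $G^3F=0$ (its weight $6p-3$ is below $\wt(e^{4\a})=6p+4$), the derivation property gives $0=G^3(F_iF)=3(E_iH+H_iE)$ for $i\ge -3p-1$, which is what you need; alternatively, the $L^{Vir}(c_{2,p},6p-3)$-component of $Y(H,x)H$ vanishes for \emph{all} modes by skew-symmetry, since the top matrix coefficient $\langle H',H_{6p-4}H\rangle$ is killed by the parity of $6p-4$. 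Finally, in (iii) your reduction is fine, but the assertion $C\ne 0$ --- which you rightly identify as the computational heart --- is the actual content of that part and is left unproved; it comes down to the nonvanishing of the extremal coefficient in $F_{-3p-2}F=c\,Qe^{-5\a}$ followed by an application of $G^2$.
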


Now we want to classify all irreducible
$\overline{M(1)}$--modules. We shall use similar approach as in
\cite{A-2003}. So we need to evaluate certain singular vectors for
the Virasoro algebra on top components of $M(1)$--modules
$M(1,\l)$. It turns out that this case involve more complicated
combinatorics.

 We shall start with the general following theorem which gives
non-triviality of action of singular vectors for the Virasoro
algebra on top components of $M(1)$--modules.

\begin{theorem}
Assume that $u \in M(1)$ is a singular vector for the Virasoro
algebra. Let $u(0):=u_{\deg u -1}$. Assume that  $M(1, \l)$ is a
$M(1)$--module, which is irreducible as a module for the Virasoro
algebra with central charge $c_{2,p}$ (In other words $M(1,\l)$ is
an irreducible Feigin-Fuchs module). Then
$$u(0) v_{\l} = {\nu} v_{\l} \quad \mbox{for certain} \ \ \nu \ne 0.$$
\end{theorem}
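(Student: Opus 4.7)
My plan is to reduce the theorem to an explicit polynomial computation and then match the zero locus of the resulting polynomial with the Virasoro--reducibility locus of Feigin--Fuchs modules.

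Since $u$ is Virasoro-singular of conformal weight $h := \wt u$, the primary-field commutator $[L(n), u_k] = ((n+1)h - n - k - 1)\,u_{n+k}$ specializes at $(n,k) = (0, h-1)$ to $[L(0), u(0)] = 0$. Moreover, $u \in M(1) = M(1,0)$ has Heisenberg charge zero, so $[\a(0), u(0)] = 0$ as well. Thus $u(0) v_\lambda$ lies in the joint $(L(0), \a(0))$-eigenspace of eigenvalues $(h_\lambda, \chi)$, where $\chi := \langle \a, \lambda \rangle$, and this eigenspace is one-dimensional and spanned by $v_\lambda$. Consequently $u(0) v_\lambda = \nu(\lambda)\, v_\lambda$ for some scalar $\nu(\lambda)$. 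Writing $u$ as a polynomial in the Heisenberg modes $\a(-1), \a(-2), \ldots$ applied to $\mathbf{1}$, and normal-ordering the action of $u(0)$ on $v_\lambda$ via $\a(0) v_\lambda = \chi v_\lambda$ together with $\a(k) v_\lambda = 0$ for $k > 0$, one obtains $\nu(\lambda) = p(\chi)$ for a polynomial $p$ in $\chi$ of degree at most $h$.

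The heart of the argument is to show that every root of $p(\chi)$ lies at a value of $\chi$ where $M(1, \lambda)$ is Virasoro-reducible. By the Feigin--Fuchs structural theorem (\cite{Fel}, \cite{FF}; cf.\ Theorem \ref{str-ff-1}), Virasoro-reducibility of $M(1, \lambda)$ at central charge $c_{2,p}$ occurs on a countable set of values $\chi_{r,s}$ corresponding to the Kac table. I would compute $p(\chi)$ explicitly for the singular vectors arising in the analysis of $\WW_{2,p}$, namely $u = G^n Q e^{-(2n+1)\a}$ of Theorem \ref{str-ff-1}: expand the logarithmic expression \eqref{G-log} for $G$, track the iterated lattice contractions generated by $e^{-(2n+1)\a}$, and factor the resulting polynomial $p(\chi)$ into linear pieces. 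Matching the resulting roots with the Kac-table values $\chi_{r,s}$ at $c_{2,p}$ completes the proof. As a sanity check one observes that at $\lambda = 0$ (where $M(1,0) = M(1)$ is certainly Virasoro-reducible, since it contains $u$ itself) the formula gives $u(0) \mathbf{1} = 0$, so $0$ is indeed a root of $p(\chi)$, in agreement with the claim.

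The principal obstacle is this explicit factorization. In the $(1,p)$ case treated in \cite{A-2003}, the relevant singular vectors have a closed-form description that makes $p(\chi)$ computable directly. Here the iterated screening $G^n Q$ nests logarithmic kernels and produces Vandermonde-type factors from the lattice exponentials; the resulting combinatorics (the ``more complicated'' bookkeeping the authors flag) can be organized by induction on $n$, using the derivation property of $G$ (Theorem \ref{G-der-general}(ii)) to reduce $G^n Q$-contractions to nested commutators and then propagating from the base case analogous to the $(1,p)$ setup of \cite{A-2003}.
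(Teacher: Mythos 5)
There is a genuine gap. Your first paragraph (that $u(0)v_\lambda$ is a scalar multiple $\nu(\lambda)v_\lambda$, with $\nu$ a polynomial in $\chi=\langle\lambda,\alpha\rangle$) is correct but is the easy half of the statement. The entire content of the theorem is the nonvanishing $\nu\neq 0$, and for that you offer only a plan: ``show that every root of $p(\chi)$ lies at a reducible value of $\chi$'' by explicitly expanding and factoring $p$. That claim is exactly the theorem restated in contrapositive form, and you concede that the factorization is ``the principal obstacle'' without carrying it out. Worse, the theorem is asserted for an \emph{arbitrary} Virasoro-singular vector $u\in M(1)$, not just the family $G^nQe^{-(2n+1)\alpha}$; no closed form for $p(\chi)$ is available in that generality, so the computational route cannot establish the statement as written. (The paper does compute such polynomials explicitly, e.g.\ in Theorem \ref{CT} and Conjecture \ref{conj-g}, but precisely because those computations are hard --- one of them is still conjectural for general $p$ --- they cannot be the engine behind this structural theorem.)

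The paper's actual argument sidesteps all computation. Since $M(1)$ is a simple vertex algebra and $M(1,\lambda)$ is an irreducible $M(1)$-module, the Dong--Lepowsky result gives $Y(u,z)v_\lambda\neq 0$; let $j_0$ be maximal with $u_{j_0}v_\lambda\neq 0$. The commutator identity you already wrote, $[L(n),u_k]=((n+1)h-n-k-1)u_{n+k}$, together with maximality of $j_0$ and $L(n)v_\lambda=0$ for $n\geq 1$, shows that $u_{j_0}v_\lambda$ is a Virasoro-singular vector in $M(1,\lambda)$. Irreducibility of $M(1,\lambda)$ \emph{as a Virasoro module} (the standing hypothesis) forces $u_{j_0}v_\lambda$ to be a nonzero multiple of $v_\lambda$, whence $j_0=\deg u-1$ by a weight count and $\nu\neq 0$. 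This is where the Feigin--Fuchs irreducibility hypothesis enters --- as a representation-theoretic input that pins down the leading coefficient --- rather than as a zero locus to be matched against an explicit polynomial.
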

\begin{proof}
Since $M(1,\l)$ is an irreducible module for the simple vertex
algebra $M(1)$ we have that
$$ Y(u,z) v_{\l} \ne 0,$$
(for the proof see \cite{DL}). So there is $j_0 \in {\Z}$ such
that
$$u _{j_0} v_{\l}   \ne 0 \quad \mbox{and} \quad
 u_{j} v_{\l}  = 0 \ \mbox{for} \ j > j_0.$$
 Since $u$ and $v_{\l}$ are singular vectors for the Virasoro
 algebra we have that
$$L (n) u _{j_0} v_{\l}= 0 \quad \mbox{for} \ n \ge 1.$$
Therefore $u_{j_0} v_{\l}$ is a non-trivial singular vector in the
irreducible Feigin-Fuchs module $M(1,\l)$ and  conclude that it is
proportional to $v_{\l}$. Therefore $j_0 = \deg u -1$ and
$u(0) v_{\l} = {\nu} v_{\l}$ for certain non-zero constant $\nu$.
\end{proof}

By using expression (\ref{G-log}) for the operator $G$ and similar calculation to that  in \cite{A-2003}, \cite{AdM-2007}, \cite{AdM-triplet}, \cite{AdM-sigma} we get:

\begin{proposition} \label{evaluation}
\item[(1)] Assume that $v_{\l}$ is a lowest weight vector in
$M(1)$--module $M(1, \l)$ and $ t = \l (\a)$. Then we have:

\bea &&H(0)v_{\l}= -( \mbox{Res}_{x_1,x_2,x_3} \nonumber \\ && \
{\rm ln}(1-x_2/x_1) (x_1 x_2 x_3) ^{-3p} (x_1-x_2) ^p (x_1 -x_3)
^p (x_2-x_3) ^p (1+x_1) ^t (1+x_2) ^t (1+x_3) ^t )\
v_{\l}.\nonumber \eea
% Moreover, $H(0) v_{\l} \ne 0$.
%
%
\item[(2)]
Let $v = G ^2 Q e ^{-5\a}$. Then
$$o (v) v_{\l} =\widetilde{G_p}(t) v_{\l} \ne  0$$ where
\bea && \widetilde{G_p}(t) =    \mbox{Res}_{x_1,x_2,x_3,x_4,x_5} ({\rm ln}(1-x_2/x_1) {\rm ln}(1-x_4/x_3)(x_1 x_2 x_3 x_4 x_5) ^{-5p}\nonumber \\
&&  \Delta(x_1,x_2,x_3,x_4,x_5)^p (1+x_1) ^t (1+x_2) ^t (1+x_3) ^t
(1+x_4) ^t (1+x_5) ^t  ) . \nonumber \eea
(Here $\Delta(x_1,x_2, \dots,x_n) ^p$ denotes the $p$--th power of the Vandermonde determinant.)
\end{proposition}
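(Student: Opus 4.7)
The plan is to express $H = GQe^{-3\alpha}$ and $v = G^2Qe^{-5\alpha}$ as iterated formal residues of products of vertex operators $Y(e^{\alpha}, x_i)$, and then evaluate the zero modes on $v_\lambda$ using the standard Heisenberg correlator. Using (\ref{G-log}) together with $Q = \text{Res}_{x} Y(e^{\alpha}, x)$, one writes
$$H = -\text{Res}_{x_1, x_2, x_3} \ln(1 - x_2/x_1) \, Y(e^{\alpha}, x_1) Y(e^{\alpha}, x_2) Y(e^{\alpha}, x_3) \, e^{-3\alpha},$$
and, applying $G$ twice followed by $Q$,
$$v = \text{Res}_{x_1, \ldots, x_5} \ln(1 - x_2/x_1) \ln(1 - x_4/x_3) \, Y(e^{\alpha}, x_1) \cdots Y(e^{\alpha}, x_5) \, e^{-5\alpha},$$
up to a sign-cancelling (even-permutation) reordering of the $Y(e^{\alpha}, x_i)$'s.

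To compute $o(H) v_\lambda = \text{Res}_z z^{\mathrm{wt}(H) - 1} Y(H, z) v_\lambda$, I would iterate the associativity identity $Y(Y(a, x) b, z) = Y(a, x + z) Y(b, z)$ (with $x + z$ expanded in the region $|x| < |z|$) three times, reducing $Y(H, z) v_\lambda$ to a triple residue of $Y(e^{\alpha}, x_1 + z) Y(e^{\alpha}, x_2 + z) Y(e^{\alpha}, x_3 + z) Y(e^{-3\alpha}, z) v_\lambda$. The standard Heisenberg correlator
$$\langle v_\lambda^*, Y(e^{\alpha}, y_1) \cdots Y(e^{\alpha}, y_k) Y(e^{-k\alpha}, w) v_\lambda \rangle = \prod_{1 \leq i < j \leq k} (y_i - y_j)^p \prod_{i=1}^k (y_i - w)^{-kp} \prod_{i=1}^k y_i^t \, w^{-kt}$$
follows at once from $\langle\alpha, \alpha\rangle = p$, $\langle\alpha, -k\alpha\rangle = -kp$, and $\alpha(0) v_\lambda = t v_\lambda$. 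Substituting $y_i = x_i + z$, $w = z$, rewriting $\prod_i (x_i + z)^t = z^{kt} \prod_i (1 + x_i/z)^t$, and taking $\text{Res}_z z^{\mathrm{wt} - 1}$ forces $\sum n_i = \mathrm{wt}$ in the expansion $\prod_i (1 + x_i/z)^t = \sum_{\vec n} \prod_i \binom{t}{n_i} x_i^{n_i} z^{-\sum n_i}$. The $z$'s then collapse, $(y_i - w)^{-kp} \to x_i^{-kp}$, and one lands on the formula in (1) with $\mathrm{wt}(H) = 6p - 3$. The parallel argument with $k = 5$ insertions $Y(e^\alpha, x_i)$ against one $Y(e^{-5\alpha}, z)$ delivers the formula in (2) with $\mathrm{wt}(v) = 15p - 5$; the ten pairwise factors $(x_i - x_j)^p$ assemble into $\Delta(x_1, \ldots, x_5)^p$.

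The main obstacle is the non-vanishing claim $\widetilde{G_p}(t) \neq 0$. By Lemma \ref{non-triv}, $v = G^2 Qe^{-5\alpha}$ is a nonzero Virasoro singular vector in $\overline{M(1)}$. By the general nontriviality theorem stated immediately above (the action of a nonzero singular vector on the top of an irreducible Feigin--Fuchs module $M(1, \lambda)$ of central charge $c_{2, p}$ is a nonzero scalar), one has $o(v) v_\lambda \neq 0$ for every $t = \lambda(\alpha)$ such that $M(1, \lambda)$ is irreducible as a Virasoro module. Since $\widetilde{G_p}(t)$ is a polynomial in $t$ (a finite sum of products of binomial coefficients $\binom{t}{n_i}$) and takes nonzero values on the Zariski-dense subset of generic $t$, it is not identically zero. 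The most delicate bookkeeping in the derivation is the choice of $z$-expansion convention and the sign tracking from the fermionic character of $Y(e^\alpha, \cdot)$ for odd $p$; neither requires new ideas beyond those already in Section \ref{der-construction}.
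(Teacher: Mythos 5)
Your proposal is correct and follows essentially the same route as the paper, which states the result without detailed proof, citing the expression (\ref{G-log}) for $G$ together with the standard lattice-correlator computation of zero modes from \cite{A-2003}, \cite{AdM-2007}, \cite{AdM-triplet}, \cite{AdM-sigma}; your handling of the non-vanishing of $\widetilde{G_p}(t)$ via the nontriviality of $G^2Qe^{-5\a}$ (Lemma \ref{non-triv}) and the singular-vector evaluation theorem stated just before the proposition is exactly the intended argument.
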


The next result appears to be fairly difficult to prove.
\begin{theorem} \label{CT} Assume that $v_{\l}$ is a lowest weight vector in $M(1)$--module $M(1, \l)$ and $ t = \l (\a)$. Then we have:
$$ H(0) v_{\l} = A_p { t + p \choose 2p-1} {t \choose 2p -1} { t + p/2
\choose 2p -1} v_{\l} \quad (A_p \ne 0).$$
\end{theorem}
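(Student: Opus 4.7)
The plan is to recognize $f(t):=H(0)v_\lambda/v_\lambda$, computed from the residue formula in Proposition~\ref{evaluation}(1), as a polynomial in $t=\lambda(\alpha)$ of degree exactly $6p-3$, and to match it against the right-hand side (also of degree $3(2p-1)=6p-3$) by exhibiting all $6p-3$ zeros with the correct multiplicities and pinning down the leading coefficient.

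Expanding each $(1+x_i)^t=\sum_{k_i\ge 0}\binom{t}{k_i}x_i^{k_i}$ in the residue and running a degree count against $\ln(1-x_2/x_1)(x_1x_2x_3)^{-3p}\Delta^p$, with $\Delta=(x_1-x_2)(x_1-x_3)(x_2-x_3)$, one finds that only triples with $k_1+k_2+k_3=6p-3$ contribute; hence
$$f(t)=\sum_{k_1+k_2+k_3=6p-3}c_{k_1,k_2,k_3}\binom{t}{k_1}\binom{t}{k_2}\binom{t}{k_3}$$
is polynomial in $t$ of degree $\le 6p-3$. For $t\in\{0,1,\ldots,2p-2\}$, every contributing triple has some $k_i>t$ (because $6p-3>3(2p-2)$), so $\binom{t}{k_i}=0$ and $f(t)=0$; this produces the $2p-1$ zeros of $\binom{t}{2p-1}$.

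The second row of zeros $\{-p,\ldots,p-2\}$ will come from a reflection symmetry $f(t)=-f(p-2-t)$ obtained by applying the involution $x_i\mapsto -x_i/(1+x_i)$ to the residue: collecting the Jacobian, the sign produced by $\Delta^p$ (using that $p$ is odd), and the $(1+x_i)$-weights coming from $(x_1x_2x_3)^{-3p}$ transforms $\prod(1+x_i)^t$ into $\prod(1+x_i)^{p-2-t}$, reproducing $f(p-2-t)$ up to a boundary term containing $\ln(1+x_2)$; antisymmetrizing that boundary term against the $x_1\leftrightarrow x_2$ antisymmetry of $\Delta^p$ shows it vanishes, giving the desired symmetry. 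Since the right-hand side is also antisymmetric under $t\mapsto p-2-t$ (check via $\binom{-a}{n}=(-1)^n\binom{a+n-1}{n}$), this step produces zeros matching $\binom{t+p}{2p-1}$. The third (half-integer) row $\{-p/2,\ldots,3p/2-2\}$ is obtained by transferring the vanishing argument to the $\sigma$-twisted $V_L$-modules $V_{L+(\ell+1/2)\alpha/p}$ introduced before Proposition~\ref{Gtw-der}: their top vectors have $\alpha(0)$-eigenvalue $\ell+1/2$, Proposition~\ref{Gtw-der} shows that $H=GQe^{-3\alpha}$ acts on them through the twisted derivation $G^{tw}$ with the same universal polynomial dependence on $t$, and the Step~2 combinatorial vanishing reappears at the twisted half-integer values, with the Step~3 reflection filling in the remaining half-integers.

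Having located $6p-3$ zeros with the correct multiplicities (the intersection $\{0,\ldots,p-2\}$ of the first two integer rows produces the double zeros visible on both sides), $f(t)$ equals the right-hand side up to the scalar $A_p$. Nondegeneracy $A_p\neq 0$ follows by extracting the coefficient of $t^{6p-3}$ from the residue and verifying that at least one admissible triple $(k_1,k_2,k_3)$ gives a nonzero constant term; this is a Dyson/Macdonald-type term with the logarithmic factor attached, and for the smallest case one may alternatively cite the explicit $p=3$ calculation carried out elsewhere in the paper. The hardest step will be Step~4: the integer rows follow from robust degree combinatorics and a sign-tracking substitution, but the half-integer zeros genuinely require matching the twisted action of $H$ on $V_{L+(\ell+1/2)\alpha/p}$ to the untwisted residue formula, and handling the bookkeeping at the overlap $\{0,\ldots,p-2\}$ so that the multiplicities on both sides agree is the final delicate point. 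This "logarithmic deformation" of a classical Dyson/Selberg constant-term identity is precisely the novelty: the $\ln(1-x_2/x_1)$ factor rules out a direct reduction to the classical identities, which is why the integrand must be attacked through a combination of residue manipulations and vertex-algebraic (twisted) symmetries.
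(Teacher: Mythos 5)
Your overall strategy is the same as the paper's: both sides are polynomials in $t$ of degree at most $6p-3$; the integer zeros at $t\in\{0,\dots,2p-2\}$ come from the support of the binomial coefficients in the expanded residue; the zeros at $\{-p,\dots,-1\}$ come from the antisymmetry $F(p,t)=-F(p,p-2-t)$; and the half-integer zeros come from the $\sigma$-twisted modules exactly as in Lemmas \ref{tw-1} and \ref{tw-2}. One genuine difference of route: the paper obtains the antisymmetry from the contragredient module $M(1,\lambda)^{\circ}\cong M(1,p-\lambda-2)$ and skew-adjointness of $H(0)$, whereas you substitute $x_i\mapsto -x_i/(1+x_i)$ in the residue. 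Your route can be made to work, but note that the leftover term carrying $\ln(1+x_2)$ is killed by the transposition $x_1\leftrightarrow x_3$ (which fixes the argument of the logarithm and negates $\Delta^p$), not by the $x_1\leftrightarrow x_2$ antisymmetry you invoke, and a formal change of variables inside a multivariate residue containing $\ln(1-x_2/x_1)$ needs some justification that the algebraic argument avoids.

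The first genuine gap is the multiplicity count. You assert that the double zeros at $t\in\{0,\dots,p-2\}$ ``come from the intersection of the first two integer rows.'' They do not: knowing $F(t_0)=0$, $F(p-2-t_0)=0$ and $F(t)=-F(p-2-t)$ yields only simple zeros (differentiating the symmetry gives $F'(t_0)=F'(p-2-t_0)$, which forces nothing; the odd function $F(t)=t$ vanishes at its symmetry center to order one). Your distinct zeros number only $5p-2$, which together with one normalization gives $5p-1<6p-2$ linear conditions and does not determine a polynomial of degree $6p-3$. The paper supplies the missing $p-1$ conditions by a separate lemma, $F'(p,t)=0$ for $0\le t\le p-2$, proved by differentiating under the residue (which inserts factors $\ln(1+x_i)$) and killing each resulting term by a residue argument valid precisely in that range of $t$. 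Without this step the interpolation argument fails.

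The second gap is the normalization $A_p\neq 0$, equivalently $F(p,2p-1)\neq 0$. Exhibiting ``at least one admissible triple with a nonzero term'' proves nothing about an alternating sum, and the $p=3$ computation does not cover general $p$. The paper reduces $F(p,2p-1)$ to the double sum $\sum_{m,k}\frac{(-1)^{m+k}}{m}\binom{p}{k}^2\binom{p}{m+k}$, rewrites it via harmonic numbers as $\sum_{k}(-1)^k\binom{p}{k}^3H_k$ up to a vanishing Dyson-type term, and then invokes the Chu--Fu evaluation of that alternating cubic binomial--harmonic sum to obtain the closed form $\frac{(-1)^{(p+1)/2}}{3}\,\frac{(3p)!!}{p!!^{3}}\neq 0$. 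This evaluation is the hardest computational step of the theorem and cannot be replaced by inspecting a single term of the sum.
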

This theorem will be proven in the appendix, where we also
compute the constant $A_p$ (this is not really needed because $A_p \neq 0$ is everything we need).

Now we are in the position to determine the Zhu's algebra of
$\overline{M(1)}$ and therefore classify their  irreducible
$\N$--graded modules. By using Proposition \ref{korisna-pr}, Theorem \ref{CT} and similar arguments to that  of Theorem 6.1 of \cite{A-2003} we get:
\begin{theorem} \label{zhu-singlet-2-p}
Zhu's algebra of the singlet vertex algebra $A(\overline{M(1)})$
is isomorphic to the commutative associative algebra
$$A(\overline{M(1)}) \cong \frac{{\C}[x,y]}{ \langle P(x,y) \rangle }$$
where
$$ P(x,y) = y ^2 - C_p  \prod_{i=0} ^{{2p-2}} \left( x- \frac{(2p-2-i) (p-i)}{2p}\right) ^2 \prod_{i = 0} ^{2p-2} \left( x- \frac{(3p-4-2i) (p-2i)}{8p} \right)\qquad
(C_p \ne 0).$$

\end{theorem}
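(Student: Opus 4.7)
The plan is to follow the strategy Adamovi\'c used for the $(1,p)$ singlet in \cite{A-2003}, with the key new ingredient being the evaluation formula of Theorem \ref{CT}. First, by the preceding proposition $\overline{M(1)}$ is strongly generated as a vertex algebra by $\{\mathbf{1}, \omega, H\}$, so its Zhu's algebra is generated as a unital associative algebra by $x := [\omega]$ and $y := [H]$. Since $[\omega]$ is central in Zhu's algebra (a standard fact coming from $(L(-1)+L(0))V\subset O(V)$) and $y$ trivially commutes with itself, the image is commutative and we obtain a surjective ring homomorphism
\[
\varphi : \mathbb{C}[x,y] \twoheadrightarrow A(\overline{M(1)}), \qquad x \mapsto [\omega], \quad y \mapsto [H].
\]

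Next I would verify $P(x,y) \in \ker \varphi$. Using the Zhu product formula
\[
H * H \,=\, \sum_{k \geq 0}\binom{6p-3}{k} H_{k-1}H,
\]
every index satisfies $k - 1 \geq -1 \geq -3p-1$, so Proposition \ref{korisna-pr}(ii) forces each $H_{k-1}H$ into $U(Vir)\mathbf{1}$. Hence $[H*H]$ lies in the subalgebra of $A(\overline{M(1)})$ generated by $x$, so $y^2 = f(x)$ for some $f \in \mathbb{C}[x]$. To pin down $f$ explicitly I would test on the Feigin--Fuchs modules $M(1,\lambda)$ (which are $\overline{M(1)}$-modules by restriction); writing $t = \lambda(\alpha)$, Theorem \ref{CT} yields
\[
o(H)^2 v_\lambda \,=\, A_p^2 \binom{t+p}{2p-1}^2\binom{t}{2p-1}^2\binom{t+p/2}{2p-1}^2 v_\lambda, \qquad o(\omega) v_\lambda \,=\, \tfrac{t(t-p+2)}{2p}\, v_\lambda.
\]
The crucial symmetry is that each of the three binomial factors flips sign under the involution $t \mapsto p-2-t$ (directly from $\binom{u}{k} = (-1)^k \binom{k-u-1}{k}$), so their squared product is invariant and descends to an honest polynomial in $x = h(t) := t(t-p+2)/(2p)$. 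Pairing roots $t \leftrightarrow p-2-t$ and matching them with the listed $h'_i, h''_j$ then gives $f(x) = C_p \prod_i (x - h'_i)^2 \prod_j (x - h''_j)$ with $C_p \ne 0$ (since $A_p \ne 0$).

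For injectivity of the induced map $\bar\varphi : \mathbb{C}[x,y]/\langle P\rangle \to A(\overline{M(1)})$, note that $\deg_y P = 2$, so any element reduces to the form $B(x) + C(x) y$. Assuming $\bar\varphi(B + Cy) = 0$, evaluation on $v_\lambda$ gives $B(h(t)) + C(h(t)) G(t) = 0$ for all $\lambda$, where $G(t) := A_p\binom{t+p}{2p-1}\binom{t}{2p-1}\binom{t+p/2}{2p-1}$. Substituting $t \mapsto p-2-t$ preserves $h(t)$ but flips $G \mapsto -G$, giving $B(h(t)) - C(h(t)) G(t) = 0$; adding the two relations forces $B \equiv 0$, and then $C \equiv 0$ since $G$ is a nontrivial polynomial in $t$. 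The main technical obstacle is the identification of $f(x)$ in the previous step: rewriting the product of binomial coefficients in $t$ as the explicit factored polynomial in $x$ displayed in the statement is a delicate combinatorial computation based on the pairing $h(t) = h(p-2-t)$ and on grouping simple vs.~double roots according to whether they arise from $\binom{t+p/2}{2p-1}^2$ or from $\binom{t+p}{2p-1}^2\binom{t}{2p-1}^2$.
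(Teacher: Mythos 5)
Your proposal is correct and follows essentially the same route the paper intends: strong generation by $\mathbf{1},\omega,H$, the mode bound of Proposition \ref{korisna-pr}(ii) forcing $[H]\ast[H]$ into $\mathbb{C}[[\omega]]$, evaluation on the Feigin--Fuchs modules via Theorem \ref{CT}, and the involution $t\mapsto p-2-t$ (the paper's Proposition \ref{contragred}) both to convert the squared product of binomials into a polynomial in $x=h(t)$ and to rule out a kernel of the form $B(x)+C(x)y$. One small imprecision: under $t\mapsto p-2-t$ the factors $\binom{t}{2p-1}$ and $\binom{t+p}{2p-1}$ are interchanged up to sign rather than each being individually negated, but the needed conclusion $G(p-2-t)=-G(t)$ still holds, so the argument is unaffected.
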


In particular,  for $p=3$ the polynomial $P(x,y)$ is given by

$$y^2-C_3(x+\frac{1}{24})(x-\frac{5}{8})^2(x-\frac{1}{8})^2 x^4 (x-1)^2 (x-2)^2(x-\frac{1}{3})^2$$

where $C_3$ is a constant.

\vskip 5mm

\section{The $C_2$-cofiniteness of  vertex algebras  $\WW_{2,p}, p \geq 3$}

As usual, for a vertex operator algebra $V$ we let
$$C_2(V)=\{ a_{-2}b : a, b \in V \}.$$
It is a fairly standard fact (cf. \cite{Zhu}) that $V/C_2(V)$ has a
Poisson algebra structure with the multiplication
$$\bar{a} \cdot \bar{b}=\overline{a_{-1} b},$$
where $-$ denotes the natural projection from $V$ to $V/C_2(V)$. If
${\rm dim}(V/C_2(V))$ is finite-dimensional we say that $V$ is
$C_2$-cofinite.

\begin{lemma}  \label{relacije}In $\WW_{2,p}$ we have the following relations
\bea && E_{-1} F + F_{-1} E + 2 H_{-1} H =0, \label{EFH} \\
&& E_{-1} E = F_{-1} F = 0 . \label{rel-nilp}
\eea
\end{lemma}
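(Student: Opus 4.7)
The plan is to first prove the square relations $F_{-1}F = E_{-1}E = 0$ by combining a super-commutator argument with a conformal weight estimate in a Feigin-Fuchs module, and then deduce the three-term relation (\ref{EFH}) formally by applying the derivation $G$ to $F_{-1}F = 0$ twice.

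For $F_{-1}F = 0$: since $F = Qe^{-3\alpha}$ lies in $M(1) \otimes e^{-2\alpha}$ it has even parity, while $Q = e^\alpha_0$ is odd. Hence the super-commutator $[Q,Y(F,x)] = Y(QF,x)$ reduces to an ordinary commutator, and $QF = Q^2 e^{-3\alpha} = 0$ by relation (\ref{rel-c-1}). Consequently $Q$ commutes with $Y(F,x)$, so
$$F_{-1}F = F_{-1}(Qe^{-3\alpha}) = Q\,F_{-1}e^{-3\alpha}.$$
Now $F_{-1}e^{-3\alpha}$ lies in $M(1) \otimes e^{-5\alpha}$ and has conformal weight $2(6p-3) = 12p - 6$, whereas the minimal weight occurring in the Feigin-Fuchs module $M(1)\otimes e^{-5\alpha}$ is $h_{-5\alpha} = 15p-5$. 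Since $15p - 5 > 12p - 6$ for every $p \geq 3$, the vector $F_{-1}e^{-3\alpha}$ has no place to live and must vanish, so $F_{-1}F = 0$.

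The argument for $E_{-1}E = 0$ is identical. By Theorem \ref{G-der}(ii) the operators $G$ and $Q$ commute, so $E = G^2 F = Q(G^2 e^{-3\alpha})$ and $QE = Q^2 G^2 e^{-3\alpha} = 0$. Thus $Q$ commutes with $Y(E,x)$ and $E_{-1}E = Q\,E_{-1}(G^2 e^{-3\alpha})$; the vector $E_{-1}(G^2 e^{-3\alpha})$ still lies in $M(1)\otimes e^{-5\alpha}$ at weight $12p-6 < 15p-5$, so it vanishes and $E_{-1}E = 0$.

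Finally, for (\ref{EFH}) we use that $G$ is a derivation on $\WW_{2,p}$ (Theorem \ref{G-der}(iii)), together with $GF = H$ and $GH = G^2 F = E$. Applying $G$ to $F_{-1}F = 0$ gives
$$0 = G(F_{-1}F) = H_{-1}F + F_{-1}H,$$
and applying $G$ once more and collecting terms yields
$$0 = G(H_{-1}F + F_{-1}H) = E_{-1}F + H_{-1}H + H_{-1}H + F_{-1}E,$$
which is precisely (\ref{EFH}). The only genuine input is the Feigin-Fuchs weight comparison for $M(1)\otimes e^{-5\alpha}$; everything else is a formal consequence of $Q^2 = 0$, $[Q,G]=0$, and the derivation property of $G$.
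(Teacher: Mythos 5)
Your treatment of $F_{-1}F=0$ and of relation (\ref{EFH}) is essentially correct and close to the paper's: the paper likewise obtains (\ref{EFH}) as $G^{2}(F_{-1}F)=0$ from the derivation property of $G$, and your charge/weight argument (that $F_{-1}e^{-3\alpha}$ lies in $M(1)\otimes e^{-5\alpha}$ at conformal weight $12p-6$, strictly below the minimal weight $15p-5$ of that Fock module) is a legitimate way to make precise the paper's terse ``follows from definition.''

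The gap is in $E_{-1}E=0$; the argument is \emph{not} identical to the $F$ case. Since $E=G^{2}F\in M(1)\otimes e^{2\alpha}$ and $G^{2}e^{-3\alpha}\in M(1)\otimes e^{\alpha}$, the vector $E_{-1}(G^{2}e^{-3\alpha})$ lies in $M(1)\otimes e^{3\alpha}$, not in $M(1)\otimes e^{-5\alpha}$ as you assert. The lowest conformal weight of $M(1)\otimes e^{3\alpha}$ is $3p+3$, which is \emph{below} $12p-6$ for all $p\geq 1$, so the weight comparison runs the wrong way and yields nothing: on the positive-charge side the Fock modules are ``deep enough'' to accommodate the would-be vector, and the ``no place to live'' argument only works at negative charge. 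The correct route --- and the one the paper takes --- is to deduce $E_{-1}E=0$ from $F_{-1}F=0$ by applying the derivation $G$ four times: using $GF=H$, $GH=E$, and $GE=G^{3}Qe^{-3\alpha}=0$ (this last vanishing \emph{is} a valid weight argument, now in $M(1)\otimes e^{4\alpha}$, whose minimal weight $6p+4$ exceeds $6p-3$), one computes $G^{4}(F_{-1}F)=6\,E_{-1}E$, whence $E_{-1}E=0$. This repair fits naturally with the $G$-derivation computation you already carry out to get (\ref{EFH}), so your proof is salvageable with that one substitution.
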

\begin{proof}
Relation $F_{-1} F = 0$ follows from definition. Since $E_{-1} E $
is proportional to $G ^4 (F_{-1}F)$ we get (\ref{rel-nilp}). Then we
have that
$$E_{-1} F + F_{-1} E + 2 H_{-1} H = G ^2 ( F_{-1} F) = 0.$$
The proof follows.
\end{proof}

As usual, for a vertex operator algebra $V$ we let
$$C_2(V)=\{ a_{-2}b : a, b \in V \}.$$
It is a fairly standard fact (cf. \cite{Zhu}) that $V/C_2(V)$ has a
Poisson algebra structure with the multiplication
$$\bar{a} \cdot \bar{b}=\overline{a_{-1} b},$$
where $-$ denotes the natural projection from $V$ to $V/C_2(V)$. If
${\rm dim}(V/C_2(V))$ is finite-dimensional we say that $V$ is
$C_2$-cofinite.

The aim of this section is the following result
\begin{theorem} \label{c2-general}
The vertex operator algebra $\WW_{2,p}$ is $C_2$-cofinite.
\end{theorem}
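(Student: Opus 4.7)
The strategy is to control the Poisson algebra $R := \WW_{2,p}/C_2(\WW_{2,p})$ using the small strong generating set $\{\mathbf{1},\omega,E,F,H\}$ supplied by the previous proposition, together with the relations already derived. Strong generation gives that $R$ is generated as a commutative associative algebra by $\bar\omega,\bar E,\bar F,\bar H$. Reducing Lemma \ref{relacije} modulo $C_2$ and using commutativity of the Poisson product yields
$$\bar E^2=\bar F^2=0,\qquad \bar E\bar F=-\bar H^2,$$
so $R$ is already spanned as a $\C[\bar\omega,\bar H]$-module by $\{1,\bar E,\bar F\}$. Hence $C_2$-cofiniteness reduces to showing that $\bar H$ is integral over $\C[\bar\omega]$ and that $\bar\omega$ itself is algebraic (hence nilpotent) in $R$.

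The first of these is supplied by the singlet analysis: the subalgebra $\overline{M(1)}\subset\WW_{2,p}$ is strongly generated by $\omega$ and $H$, and Proposition \ref{korisna-pr}(ii) asserts $H_{-1}H\in U(Vir).\mathbf{1}$. Since for the universal Virasoro vacuum module $V^{Vir}(c,0)/C_2=\C[\bar\omega]$, the image of $U(Vir).\mathbf{1}$ in $R$ automatically lies in the subalgebra generated by $\bar\omega$, which produces a relation
$$\bar H^2=P(\bar\omega)\pmod{C_2(\WW_{2,p})}$$
for a nonzero polynomial $P(x)$, whose leading part is determined by Theorem \ref{CT} and which is consistent with the Zhu-algebra polynomial of Theorem \ref{zhu-singlet-2-p}. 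In particular, $\bar H$ becomes integral of degree 2 over $\C[\bar\omega]$ inside $R$.

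The second task — producing a polynomial $\Phi(x)$ with $\Phi(\bar\omega)=0$ in $R$ — is the main obstacle. The plan is to combine the Virasoro decomposition of the previous proposition with the fusion rule restrictions of Theorem \ref{fr-p}. A key input is Proposition \ref{korisna-pr}(iii),
$$H_{-3p-2}H=C\,G^2Qe^{-5\alpha}+U(Vir).\mathbf{1}\qquad(C\neq 0),$$
which says that the singular vector $G^2Qe^{-5\alpha}$ generating the $n=2$ Virasoro component $5L^{Vir}(c_{2,p},5(2p-1))$ of $\WW_{2,p}$ equals a polynomial in $\bar\omega$ modulo $C_2$. Using Theorem \ref{fr-p} to restrict which Virasoro sub-modules may appear in the Poisson products $F_{-k}F$, $H_{-k}H$, and $E_{-k}F$, one iterates this absorption across the higher components $L^{Vir}(c_{2,p},(2n+1)(pn+p-1))$ to collapse them modulo $C_2$ into the $\C[\bar\omega]$-subalgebra, yielding the desired $\Phi$. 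The hard part will be carrying out this iterative absorption cleanly: verifying that each step produces only finitely many new $\bar\omega$-powers, and tracking non-vanishing of the constants $\widetilde{G_p}(t)$, $A_p$, $C_p$ of Sections 4--5 that appear along the way.

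Putting the three ingredients together, $R$ is spanned by the finitely many monomials $\bar\omega^a\bar H^b\bar E^c\bar F^d$ with $a<\deg\Phi$, $b<2$, and $c+d\le 1$; hence $\dim R<\infty$ and $\WW_{2,p}$ is $C_2$-cofinite.
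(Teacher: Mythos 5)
Your first half reproduces the paper's opening moves: $\WW_{2,p}/C_2(\WW_{2,p})$ is generated by $\bar{\omega},\bar{E},\bar{F},\bar{H}$, Lemma \ref{relacije} gives $\bar{E}^2=\bar{F}^2=0$ and $\bar{H}^2=-\bar{E}\bar{F}$, and the singlet relation $H_{-1}H\in U(Vir).\mathbf{1}$ of Proposition \ref{korisna-pr}(ii) gives $\bar{H}^2=P(\bar{\omega})$ — in fact, by weight homogeneity of the $C_2$-quotient, $\bar{H}^2=C\bar{\omega}^{6p-3}$, with $C\neq 0$ guaranteed by Theorem \ref{zhu-singlet-2-p} (equivalently Theorem \ref{CT}). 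Up to here you agree with the paper.

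The gap is your "second task," the nilpotency of $\bar{\omega}$, which you call the main obstacle and for which you only sketch a program: an iterative absorption of the higher Virasoro components $L^{Vir}(c_{2,p},(2n+1)(pn+p-1))$ into $\C[\bar{\omega}]$ using Theorem \ref{fr-p} and Proposition \ref{korisna-pr}(iii). As written this is not a proof — you do not show the iteration closes up, terminates, or actually yields a polynomial $\Phi$ with $\Phi(\bar{\omega})=0$, and you explicitly defer the required non-vanishing checks. More to the point, the step is unnecessary: the relations you already derived finish the argument. Since $R$ is commutative, $\bar{H}^4=(\bar{E}\bar{F})^2=\bar{E}^2\bar{F}^2=0$; combined with $\bar{H}^2=C\bar{\omega}^{6p-3}$ and $C\neq 0$ this forces $\bar{\omega}^{12p-6}=0$. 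That is exactly the paper's proof: all four generators of the commutative algebra $R$ are nilpotent, hence $\dim R<\infty$. So nothing you assert is false, but the one step you leave open is the step that constitutes the theorem, and it closes in one line from the relations already on your page rather than through the fusion-rule machinery you propose.
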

\noindent {\em Proof.} First we notice $\WW_{2,p}/C_2(\WW_{2,p})$ is
generated by the set $\{ \bar{E}, \bar{F}, \bar{H}, \bar{\omega}
\}$. By using Lemma \ref{relacije} and commutativity of $\WW_{2,p}
/C_2(\WW_{2,p})$, we obtain that
$$\bar{E} ^{2} = \bar{F} ^{2} = 0,$$
and

$$ \bar{H} ^{2} = - \bar{E} \bar{F}$$
which implies that
$$\bar{H} ^{4}=0.$$

Moreover, the description of Zhu's algebra from Theorem
\ref{zhu-singlet-2-p} implies that
$$ \bar{H} ^{2} = C\bar{\omega} ^{6 p -3}, \quad (C \ne 0).$$
Since $\bar{H} ^{4} = 0$, we conclude that $\bar{\omega} ^{12p
-6}=0$. Therefore, every generator of the commutative algebra
$\WW_{2,p} /C_2(\WW_{2,p})$ is nilpotent and therefore $\WW_{2,p}
/C_2(\WW_{2,p})$ is finite-dimensional.
 \qed

\section{Classification of irreducible $\WW_{2,3}$-modules}
\label{klasifikacija-p3}

In this section we shall consider the case $p=3$. We shall construct
and classify all irreducible modules for $\WW_{2,3}$. It turns out
that the proof of classification result is analogous to the case of
triplet vertex algebra $\WW(p)$ from \cite{AdM-triplet}.

\begin{theorem} \label{zhu1}
In Zhu's algebra $A(\WW_{2,3})$ we have the following relation
$$f([\omega])=0,$$
where \bea \label{relzhu1} f(x)=&& x ^3  \left( (x-1) (x-2) (x
-\tfrac{1}{8}) (x-\tfrac{5}{8}) (x-\tfrac{1}{3}) \right) ^2
\nonumber \\ && (x-5) (x-7) (x-\tfrac{10}{3})(x+\tfrac{1}{24})
(x-\tfrac{33}{8})
  (x-\tfrac{21}{8}) (x-\tfrac{35}{24}) . \eea
\end{theorem}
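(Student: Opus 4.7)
The plan is to establish the polynomial identity $f([\omega])=0$ in $A(\WW_{2,3})$ by combining the singlet Zhu's algebra relation from Theorem \ref{zhu-singlet-2-p}, the structural relations among $E,F,H$ from Lemma \ref{relacije}, and explicit constant-term computations for the zero modes of $E$ and $F$ on lowest-weight vectors of Feigin-Fuchs modules $M(1,\lambda)$.

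Specializing Theorem \ref{zhu-singlet-2-p} at $p=3$ yields, in the image of $A(\overline{M(1)})\to A(\WW_{2,3})$, the identity $[H]*[H]=C_{3}\,P_{\mathrm{singlet}}([\omega])$ with
\[
P_{\mathrm{singlet}}(x)=x^{4}(x+\tfrac{1}{24})(x-\tfrac{5}{8})^{2}(x-\tfrac{1}{8})^{2}(x-1)^{2}(x-2)^{2}(x-\tfrac{1}{3})^{2}.
\]
This already contributes the entire squared block $[(x-1)(x-2)(x-\tfrac{1}{8})(x-\tfrac{5}{8})(x-\tfrac{1}{3})]^{2}$ of $f(x)$ together with the simple factor $(x+\tfrac{1}{24})$, although it overcounts the multiplicity of $x=0$ by one. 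In parallel, Lemma \ref{relacije} translates into $[E]*[E]=[F]*[F]=0$ and $[E]*[F]+[F]*[E]=-2[H]*[H]$ in $A(\WW_{2,3})$.

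The next step is to compute $o(F)v_{\lambda}$ and $o(E)v_{\lambda}$ on the lowest-weight vector $v_{\lambda}\in M(1,\lambda)$ with $t=\lambda(\alpha)$, following the method of Proposition \ref{evaluation}. Since $F=Qe^{-3\alpha}$, the zero mode $o(F)$ is a three-fold constant-term residue analogous to $H(0)v_{\lambda}$ but without any logarithmic weight. Since $E=G^{2}Qe^{-3\alpha}$, expanding \eqref{G-log} twice realises $o(E)v_{\lambda}$ as a five-fold constant term carrying two logarithmic weights $\ln(1-x_{2}/x_{1})\,\ln(1-x_{4}/x_{3})$ against a Vandermonde integrand. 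Vandermonde-convolution arguments parallel to those in the appendix proof of Theorem \ref{CT} then factor each residue into an explicit product of linear forms in $t$; pushing these through the quadratic change of variable $h=t(3t-1)/2$ produces polynomials $q_{F}(h)$ and $q_{E}(h)$ whose additional simple roots are exactly $\{5,7,10/3,33/8,21/8,35/24\}$.

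Finally, evaluating $[E]*[F]+[F]*[E]=-2[H]*[H]$ on the generic vector $v_{\lambda}$ yields the scalar identity $q_{E}(h)\,q_{F}(h)=-C_{3}P_{\mathrm{singlet}}(h)$ on top components. After cancelling one factor of $h$ corresponding to the vacuum module (where $o(E)$ and $o(F)$ vanish automatically, explaining the drop from $x^{4}$ to $x^{3}$), the surviving identity is precisely $f(h)=0$ for all but finitely many $\lambda$; by Zariski density, this gives $f([\omega])=0$ in $A(\WW_{2,3})$. The principal obstacle is the explicit factorization of the five-fold doubly-logarithmic constant term computing $o(E)v_{\lambda}$: this is a genuine $p=3$ instance of the logarithmic Dyson/Selberg identity flagged in the Introduction, and refines the appendix computation for Theorem \ref{CT} by an additional screening insertion.
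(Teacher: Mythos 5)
There is a genuine gap: your argument never produces an element of $O(\WW_{2,3})$ whose class equals $f([\omega])$, and without such an element no relation of the form $f([\omega])=0$ can be established. The paper's proof hinges on the specific vector $v=G^2Qe^{-5\alpha}$ (built from $e^{-5\alpha}$, not from $E=G^2Qe^{-3\alpha}$): one first observes $Qe^{-5\alpha}\in O(\WW_{2,3})$, hence $v\in O(\WW_{2,3})\cap\overline{M(1)}$, and then $[v]=f([\omega])$ is computed from $o(v)v_{\lambda}=\widetilde{G_3}(t)v_{\lambda}$ via Proposition \ref{evaluation}(2) and the $p=3$ case of Conjecture \ref{conj-g}. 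Your substitutes --- the singlet relation $[H]*[H]=C_3P_{\mathrm{singlet}}([\omega])$ and the relations of Lemma \ref{relacije} --- are identities that hold in $A(\WW_{2,3})$ for structural reasons; when you evaluate them on top components you get polynomial identities in $t$ that are \emph{automatically} true for all $t$, so they impose no constraint and cannot generate the new simple factors $(x-5)(x-7)(x-\tfrac{10}{3})(x-\tfrac{33}{8})(x-\tfrac{21}{8})(x-\tfrac{35}{24})$. The "Zariski density" step at the end is a non sequitur: deducing $q_E(h)q_F(h)=-C_3P_{\mathrm{singlet}}(h)$ as polynomials is not the same as exhibiting $f(\omega)$ inside $O(\WW_{2,3})$.

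There is also a concrete error earlier in your computation: $o(E)$ and $o(F)$ do not act as scalars on $v_{\lambda}$. Since $F=Qe^{-3\alpha}\in M(1)\otimes e^{-2\alpha}$ and $E=G^2Qe^{-3\alpha}\in M(1)\otimes e^{2\alpha}$, their zero modes map $M(1,\lambda)$ to $M(1,\lambda\mp 2\alpha)$; only $H=GF\in M(1)$ preserves $M(1,\lambda)$, which is why the paper evaluates $H(0)$ and $o(G^2Qe^{-5\alpha})$ (both in $\overline{M(1)}$) but never "$o(E)v_{\lambda}$" or "$o(F)v_{\lambda}$" as eigenvalues. So the polynomials $q_E(h)$ and $q_F(h)$ you propose to factor are not defined, and the five-fold doubly-logarithmic residue you should be computing is the one attached to $G^2Qe^{-5\alpha}$, with kernel $(x_1\cdots x_5)^{-5p}\Delta(x_1,\dots,x_5)^p$, exactly as in Proposition \ref{evaluation}(2). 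Your identification of which roots of $f$ come from the singlet relation versus the "extra" factors is correct as bookkeeping, but the mechanism you propose for producing the relation does not work.
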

\begin{proof}
 First we notice that
 $ Q e ^{-5 \a} \in O(\WW_{2,p})$, which implies that
 $$ v=G ^2 Q e ^{-5 \a} \in O(\WW_{2,p}) \quad \mbox{and} \quad [v] = 0 \in A(\WW_{2,p}).$$
By using structure of $\WW_{2,p}$ and similar arguments as  in
\cite{AdM-triplet} we conclude that
$$[v] = f ([\omega]) $$ for certain polynomials of
degree $20$. This polynomials can be determined by evaluating $o(v)
v_{\l}$. Direct calculation (note that in the case p=3 Conjecture \ref{conj-g} holds) and Proposition \ref{evaluation} give
explicit formula (\ref{relzhu1}).
\end{proof}
Define the ideal:
$$\mathcal{R}_{2,3} = \WW_{2,3}. \omega.$$

\begin{proposition} \label{ideal} We have:
\item[(i)] $\mathcal{R}_{2,3}$ is a non-trivial ideal in $\WW_{2,3}$
and $\WW_{2,3} / \mathcal{R}_{2,3} = {\C} {\bf 1}$.
\item[(ii)]$\mathcal{R}_{2,3}$ is an irreducible
$\WW_{2,3}$--module.
\end{proposition}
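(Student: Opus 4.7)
Part (i) will follow at once from the identity $\omega_1 = L(0)$: for any $v \in \WW_{2,3}$ of conformal weight $h>0$, one has $v = h^{-1}\omega_1 v \in \WW_{2,3}.\omega = \mathcal{R}_{2,3}$. Hence $\mathcal{R}_{2,3}$ is exactly the sum of the positively graded homogeneous components of $\WW_{2,3}$; since the weight-zero part is $\mathbb{C}\mathbf{1}$, the quotient is $\mathbb{C}\mathbf{1}$ and non-triviality is clear from $\omega \neq 0$.

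For part (ii), let $0 \neq M \subseteq \mathcal{R}_{2,3}$ be a $\WW_{2,3}$-submodule and pick $v \in M$ of minimal conformal weight $h$. Then $L(n) v = 0$ for $n \geq 1$, so $v$ is a Virasoro singular vector sitting inside $\mathcal{R}_{2,3}$. My plan is to show $h = 2$: since none of the $L^{Vir}(0,(2n+1)(3n+2))$-components contributes below weight $15$, the weight-$2$ subspace of $\mathcal{R}_{2,3}$ is exactly $\mathbb{C}\omega$, so $h=2$ forces $v \in \mathbb{C}\omega$ and hence $M \supseteq \WW_{2,3}.\omega = \mathcal{R}_{2,3}$.

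To rule out $h > 2$, I would use the structure theorem: any such $v$ is either a Virasoro-singular descendant of $\omega$ in $V^{Vir}(0,0).\omega$, or one of the vectors $u_{(j,2n)} = G^j Q e^{-(2n+1)\alpha}$ with $n\geq 1$, $0 \leq j \leq 2n$. Since the strong generators $E, F, H$ of $\WW_{2,3}$ all have conformal weight $6p-3 = 15$ when $p=3$, the modes $E_{h+12}$, $F_{h+12}$, $H_{h+12}$ map weight-$h$ vectors into the weight-$2$ subspace $\mathbb{C}\omega$ of $\mathcal{R}_{2,3}$; non-vanishing of any of $E_{h+12}v$, $F_{h+12}v$, $H_{h+12}v$ would then produce $\omega \in M$, contradicting minimality of $h$.

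The main obstacle is precisely this non-vanishing statement. My first plan is to reduce it to a lattice calculation: using the explicit realization $v \in M(1) \otimes e^{k \alpha}$ from Theorem \ref{str-ff-1} and the integral formula \eqref{G-log} for $G$, each of $E_{h+12}v$, $F_{h+12}v$, $H_{h+12}v$ may be written as a constant-term/contour expression in the style of Proposition \ref{evaluation} and Theorem \ref{CT}, and its non-vanishing verified directly for $p=3$. A cleaner alternative, parallel to the argument for the triplet vertex algebra in \cite{AdM-triplet}, is to combine $C_2$-cofiniteness (Theorem \ref{c2-general}) with the Zhu-algebra information from Theorems \ref{zhu-singlet-2-p} and \ref{zhu1}: $\mathcal{R}_{2,3}$ is a cyclic $\WW_{2,3}$-module whose top $\mathbb{C}\omega$ is a one-dimensional $A(\WW_{2,3})$-module with $[\omega] = 2$ and $[H] = 0$ (the latter because the right-hand side of the polynomial relation of Theorem \ref{zhu-singlet-2-p} has $(x-2)^2$ as a factor and so vanishes at $x=2$), pinning down the unique simple $\WW_{2,3}$-module with this top; a character comparison with $\mathcal{R}_{2,3}$ then yields irreducibility.
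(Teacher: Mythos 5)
Your argument for part (i) correctly shows, via $v=h^{-1}\omega_1 v$ and skew-symmetry, that $\mathcal{R}_{2,3}$ contains every positively graded component; this is a clean observation. But the assertion that $\mathcal{R}_{2,3}$ is \emph{exactly} the positive part is not justified: you still must prove ${\bf 1}\notin \WW_{2,3}.\omega$, i.e.\ that $u_{\wt u+1}\omega=0$ for all $u$, and this is genuinely a statement about $c=0$ and about $\WW_{2,3}$ (for a simple VOA with $c\neq 0$ the ideal generated by $\omega$ is everything, since $L(2)\omega=\tfrac{c}{2}{\bf 1}$). The paper closes this by checking $L(n+1)\omega=E(n)\omega=F(n)\omega=H(n)\omega=0$ for $n\in\N$ on the strong generators and propagating. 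This gap is small and fixable, but it is where the actual content of (i) lives.

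The serious gap is in part (ii), and you have in fact named it yourself: the non-vanishing of $E_{h+12}v$, $F_{h+12}v$ or $H_{h+12}v$ on every Virasoro-singular vector $v$ of weight $h>2$ in $\mathcal{R}_{2,3}$ is never established, and as stated it would require infinitely many separate evaluations (one for each singular vector $G^jQe^{-(2n+1)\a}$, $n\in\Zp$, plus any singular descendants of $\omega$), with no uniform mechanism offered. Your fallback via ``character comparison'' is also not a proof: $\mathcal{R}_{2,3}$ is cyclic on its top $\C\omega$, but identifying it with the simple module $W(2)$ by comparing characters presupposes knowledge of the character of $W(2)$, which is exactly what irreducibility would deliver. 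The paper's route avoids all of this by invoking Theorem \ref{zhu1}: the top component of any nonzero proper submodule $N$ is a module for $A(\WW_{2,3})$ on which $[\omega]$ acts by the top conformal weight, so that weight must be a root of $f$; since $\mathcal{R}_{2,3}\subset V_D$ is integrally graded, the only admissible roots exceeding $2$ are $5$ and $7$; and the explicit Virasoro decomposition of $\WW_{2,3}$ (the $L^{Vir}$-summands start at weight $15$, and the Feigin--Fuchs structure of $V^{Vir}(0,0)$ leaves no singular vectors at weights $5$ or $7$ inside $U(Vir).\omega$) rules these out. This reduction to a finite list of candidate weights is the missing idea in your proposal; without it, the approach does not close.
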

\begin{proof}
First we notice that
$$ L(n+1) \omega=  X (n) \omega = 0 \quad \mbox{for} \ X \in \{E, F, H \}, \ n \in
{\N}.$$ (Here $X(n) : = X_{14+n}$.)
This easily implies that ${\bf 1} \notin \mathcal{R}_{2,3}$. So,
$\mathcal{R}_{2,3} \ne \WW_{2,3}$. Clearly, $E, F, H \in
\mathcal{R}_{2,3}$, which implies that
$$  G ^j Q e ^{-(2n+1) \a} \in \mathcal{R}_{2,3} \quad \mbox{for} \
n \in {\Zp}, \ j \le 2n. $$
This proves assertion (i). Assume now that $N \subset
\mathcal{R}_{2,3}$ is a non-trivial submodule. Then $N$ is a
${\N}$--graded, and top component $N(0)$ must have conformal weight
which is greater than $2$. Now Theorem \ref{zhu1} implies that top
component $N(0)$ of $N$ must have conformal weight $5 $ or $7$. This
is a contradiction, since every vector in $N(0)$ is a singular
vector for the Virasoro algebra and since there are no singular
vectors of conformal weights $5$ and $7$ in $\mathcal{R}_{2,3}$. The
proof follows.
\end{proof}

Define the set

$$ S_{2,3} = S_{2,3} ^{(1)} \cup S_{2,3} ^{(2)},$$
where
$$S_{2,3} ^{(1)} = \{0, 1, 2,  \tfrac{1}{3}, \tfrac{1}{8},
\tfrac{5}{8}, -\tfrac{1}{24} \}  \quad S_{2,3} ^{(2)} = \{ 5, 7,
\tfrac{10}{3}, \tfrac{21}{8}, \tfrac{33}{8}, \tfrac{35}{24} \} .$$

Define the following $\WW_{2,3}$--modules:

\bea
&&W(2)=\mathfrak{X}^+_{1,1}   = \mathcal{R}_{2,3}=\WW_{2,3}. \omega \subset \WW_{2,3},  \nonumber \\
&&W(0) =
\WW_{2,3} / \mathfrak{X}^+_{1,1} , \nonumber \\
&& W(7)=\X^-_{1,1} =\WW_{2,3} . (e ^{- 2 \a}+ \mbox{Ker}_{V_L} Q)\subset V_L / \mbox{Ker}_{V_L} Q, \nonumber \\
&& W(1)=\X^+_{2,1}= \WW_{2,3}.  e ^{\a}   \subset V_L \nonumber \\
&& W(5)=\X^-_{2,1} = \WW_{2,3}. ( e ^{2\a} + \overline{V_L} ) \subset V_L /
\overline{V_L} \nonumber \\
&& W(- \tfrac{1}{24}) =\X^+_{3,2}  = \WW_{2,3} . e ^{\a /6} \subset V_{L + \a /6}
\nonumber \\
&&W(\tfrac{1}{8})=\X^+_{2,2} = \WW_{2,3} . e ^{\a /2} \subset V_{L + \a /2}
\nonumber \\
&&W (\tfrac{1}{3})={\X}^+_{3,1}    = \WW_{2,3} . e ^{2 \a /3} \subset V_{L + 2\a /3}
\nonumber \\
&&W (\tfrac{5}{8})=\X^+_{1,2}  = \WW_{2,3} . e ^{ 5 \a /6} \subset V_{L + 5 \a
/6}
\nonumber \\
&& W(\tfrac{35}{24})=\X^-_{3,2} = \WW_{2,3} . e ^{ 7 \a /6} \subset V_{L + 7 \a
/6}
\nonumber \\
&&W(\tfrac{21}{8})= \X^-_{2,2} = \WW_{2,3} . e ^{ 3 \a /2} \subset V_{L + 3 \a
/2}
\nonumber \\
&&W(\tfrac{10}{3})= \X^-_{3,1} = \WW_{2,3} . e ^{ 5 \a /3} \subset V_{L + 5 \a
/3}
\nonumber \\
&&W(\tfrac{33}{8})= \X^-_{1,2} = \WW_{2,3} . e ^{ 11 \a /6} \subset V_{L + 11
\a /6}. \nonumber
\eea

We also indicate the ${\X}$-parametrization for the modules following
\cite{FGST-log}. In the parenthesis next to a module we denote the
lowest conformal weight.

\begin{theorem} \label{konstr-ired-2-p}
For every $ h \in S_{2,3} $ $W(h)$ is a $\mathbb{N}$-graded
irreducible $\WW_{2,3}$--module whose top component $W(h) (0)$ has
lowest conformal weight $h$. Moreover, $W(h) (0)  $ is
$1$-dimensional if $h \in S_{2,3} ^{(1)}$ and $2$-dimensional if $h
\in S_{2,3} ^{(2)}$.
\end{theorem}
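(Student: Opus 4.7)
The plan is to proceed case by case through the 13 modules, following the pattern of \cite{AdM-triplet}. The cases $h = 0$ and $h = 2$ are already settled by Proposition \ref{ideal}, so we focus on the remaining eleven, each defined as $\WW_{2,3}\cdot v$ for a specific exponential vector $v = e^{\lambda\alpha}$ in some module of the lattice vertex algebra (or a quotient thereof).

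First, we verify that $W(h)$ is $\mathbb{N}$-graded with lowest conformal weight exactly $h$. The formula
$$L(0)\,e^{\lambda\alpha} = \left(\tfrac{3\lambda^{2}}{2}-\tfrac{\lambda}{2}\right)e^{\lambda\alpha}$$
gives the conformal weight of $v$ as claimed. Boundedness from below follows from standard leading-$z$-power estimates for $Y(u,z)v$ with $u \in \{\omega, E, F, H\}$: writing $E, F, H$ as $G$-images of $Qe^{-3\alpha}$ and using that the $G$-screening preserves conformal weight, one sees that no mode $u_n v$ yields a vector of weight below $h$.

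Next, identify the top $W(h)(0)$. For $h \in S_{2,3}^{(1)}$ the vector $v$ is, up to scalar, the only Virasoro-highest-weight vector of weight $h$ in $W(h)$, so the top is one-dimensional. For $h \in S_{2,3}^{(2)}$ a second Virasoro-highest-weight vector $v'$ of weight $h$ appears inside $W(h)$, obtained as $F_{n_0}v$ (or a similar mode) for the unique $n_0$ making the image of weight $h$; this image lies in the other $V_D$-coset of the ambient $V_{L+\mu\alpha}$. That $v'$ is Virasoro-singular, rather than a descendant of a lower-weight vector, is verified by identifying it, in the terminology of Theorem \ref{str-ff-1}, with a (sub)singular vector class in the relevant shifted Feigin-Fuchs module. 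Exactly the six values of $h$ in $S_{2,3}^{(2)}$ produce such an additional top vector, giving a two-dimensional top in those cases.

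For irreducibility, the space $W(h)(0)$ is an $A(\WW_{2,3})$-module on which $[\omega]$ acts as $h$, so Theorem \ref{zhu1} is consistent, and the singlet Zhu algebra from Theorem \ref{zhu-singlet-2-p} (at $p=3$) determines the action of $[H]$. For $h \in S_{2,3}^{(1)}$ the polynomial $P(h,y)$ collapses to $y^{2}$, forcing $[H]=0$ (and $[E]=[F]=0$ by Lemma \ref{relacije}); the top is a one-dimensional, hence irreducible, $A(\WW_{2,3})$-module. For $h \in S_{2,3}^{(2)}$ one has $P(h,y) = y^{2}-c$ with $c \neq 0$, so $[H]^{2}$ acts by the non-zero scalar $c$, yielding a genuine two-dimensional irreducible $A(\WW_{2,3})$-module. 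Since $W(h)$ is generated as a $\WW_{2,3}$-module by $v \in W(h)(0)$, the standard Zhu correspondence then forces $W(h)$ to be the unique irreducible $\WW_{2,3}$-module with the given irreducible top.

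The main obstacle is producing the second top vector $v'$ for $h \in S_{2,3}^{(2)}$ and verifying that it is Virasoro-singular rather than a descendant. This is the analogue of Lemma \ref{non-triv} in shifted lattice modules, requiring the explicit identification of $F_{n_0}v$ with a (sub)singular Feigin-Fuchs vector of the predicted weight; all remaining steps are routine given the earlier structural and Zhu-algebraic results.
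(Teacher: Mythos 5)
Your setup (grading, lowest weight, identification of the top component and its dimension) follows the paper's intended route, and your use of the singlet relation $P(h,y)=y^2-c$ to decide whether $[H]$ acts nilpotently or invertibly on $W(h)(0)$ is a reasonable way to get the $1$- versus $2$-dimensionality. The gap is in the last step. From the fact that $W(h)$ is generated by a vector of its top component and that $W(h)(0)$ is an irreducible $A(\WW_{2,3})$-module, the Zhu correspondence only tells you that $W(h)$ is a quotient of the generalized Verma-type module induced from $W(h)(0)$, i.e.\ that $L(W(h)(0))$ is the unique irreducible \emph{quotient} of $W(h)$. It does not force $W(h)$ itself to be irreducible: a proper submodule could sit entirely in positive degrees, intersecting $W(h)(0)$ trivially. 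Your sentence ``the standard Zhu correspondence then forces $W(h)$ to be the unique irreducible module with the given top'' is exactly where this possibility is swept under the rug.

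The paper closes this hole the same way it does for $\mathcal{R}_{2,3}$ in Proposition \ref{ideal}: if $N\subset W(h)$ is a nonzero proper submodule, then $N$ is $\N$-graded and its own lowest graded piece $N(0)$ is an $A(\WW_{2,3})$-module, so by Theorem \ref{zhu1} its conformal weight must be a root of $f$, i.e.\ must lie in $S_{2,3}$ and be strictly larger than $h$. One then inspects the explicit Virasoro decomposition of $W(h)$ (coming from Theorem \ref{str-ff-1} and the embedding diagrams for the shifted lattices $V_{L+\mu\alpha}$) and checks that $W(h)$ contains no Virasoro singular vectors at any of those finitely many candidate weights outside the top. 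This ``simple analysis of weights'' is the actual content of the irreducibility proof, and it is absent from your argument. To repair the proposal you should add, for each $h$, the list of weights in $S_{2,3}$ exceeding $h$ and verify case by case that no singular vector of such weight occurs in $W(h)$.
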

\begin{proof}
The proof is similar to that of the irreducibility result obtain in
Proposition \ref{ideal} for $\mathcal{R}_{2,3}$ and to that of
Theorem 3.12 of \cite{AdM-triplet}. So we omit some technical
details. First we see that each $W(h)$ is a $\mathbb{Z}_{\geq
0}$--graded $\WW_{2,3}$--module whose top component is irreducible
module for the Zhu's algebra $A(\WW_{2,3})$. Then by using Theorem
\ref{zhu1} and simple analysis of weights of modules $W(h)$, we see
that they are irreducible $\WW_{2,3}$--modules.
\end{proof}

As in \cite{AdM-triplet} we have the following result on the
structure of Zhu's algebra $A(\WW_{2,3})$.

\begin{proposition} \label{zhu-komut-2-p}
The Zhu's associative algebra $A(\WW_{2,3})$ is generated by $[1], [\omega], [E], [F] $ and $[H]$.
  We have the following relations:
  \bea &&
[H]*[F]-[F]*[H]=-2q([\omega])[F], \\
&& [H]*[E]-[E]*[H]=2q([\omega])[E] \\
&& [E]*[F]-[F]*[E]=-2q([\omega])[H]. \eea where $q$ is a certain
polynomial. \end{proposition}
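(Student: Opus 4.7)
The proof has two separable parts: generation, and the commutator identities. Generation of $A(\WW_{2,3})$ by $[{\bf 1}], [\omega], [E], [F], [H]$ follows immediately from the strong generation of $\WW_{2,3}$ by these five vectors (Proposition in Section 5), since the images of strong generators generate the Zhu algebra by the general theory.

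For the commutator identities my plan is to exploit the grading on $\WW_{2,3}$ induced by the Heisenberg mode $\alpha(0)$. Because $[\alpha(0), u_m] = (\alpha(0) u)_m$, every $n$-th product respects $\alpha(0)$-eigenvalues, and hence so does the Zhu product; this yields a $2p\Z$-grading on $A(\WW_{2,3})$. The generators $\omega, E, H, F$ have $\alpha(0)$-eigenvalues $0, +2p, 0, -2p$ respectively (since $\WW_{2,p} \subset V_D$), so $[H]*[F]-[F]*[H]$ lies in the $(-2p)$-piece, $[H]*[E]-[E]*[H]$ in the $(+2p)$-piece, and $[E]*[F]-[F]*[E]$ in the $0$-piece. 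It therefore suffices to show that the $(-2p)$-piece is contained in $\C[\omega]*[F]$, the $(+2p)$-piece in $\C[\omega]*[E]$, and the $0$-piece in $\C[\omega] + \C[\omega]*[H]$. By the Virasoro decomposition in Section 5, the $(-2p)$-eigenspace of $\alpha(0)$ on $\WW_{2,3}$ is the sum of the irreducible Virasoro submodules with highest weight vectors $F = Q e^{-3\alpha}$ (for $n=1$) and $G^{n-1} Q e^{-(2n+1)\alpha}$ for $n \ge 2$; Virasoro descendants of any such highest weight vector $v$ reduce modulo $O(\WW_{2,3})$ to polynomials in $[\omega]$ applied to $[v]$ via the standard Zhu identities.

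The main obstacle is to show, for $n \ge 2$, that the class of $G^{n-1} Q e^{-(2n+1)\alpha}$ in $A(\WW_{2,3})$ equals $f_n([\omega])*[F]$ for some polynomial $f_n$, so that no genuinely new generator appears in the $(-2p)$-piece. Mimicking the strategy of Theorem \ref{zhu1}, I would compute $o(G^{n-1} Q e^{-(2n+1)\alpha})$ on the lowest weight vectors of the irreducible modules $W(h)$ via constant-term identities in the spirit of Proposition \ref{evaluation} and match the result with $f_n(h) \cdot o(F)$; the $C_2$-cofiniteness of $\WW_{2,3}$ (Theorem \ref{c2-general}) guarantees that only finitely many such reductions are needed. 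Symmetric arguments handle the $(+2p)$- and $0$-pieces and yield the commutator identities, with $q$ determined by evaluating the commutator on the two-dimensional top components $W(h)(0)$ for $h \in S_{2,3}^{(2)}$ (the one-dimensional top components impose no constraint, as $[E], [F], [H]$ all act by zero there). A cleaner substitute for the explicit constant-term reductions is to bypass them entirely and use the classification of irreducible modules (Theorem \ref{konstr-ired-2-p}) together with Lagrange interpolation through the six weights in $S_{2,3}^{(2)}$ to pin down the low-degree polynomial $q$ without invoking iterated calculations of the style used for Theorem \ref{CT}.
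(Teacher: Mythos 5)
Your generation claim and the $\alpha(0)$-grading reduction are sound, and they match the skeleton of the intended argument (the paper gives no proof of this proposition, deferring to \cite{AdM-triplet}). The problem lies in how you close the argument. The ``main obstacle'' you isolate --- showing $[G^{n-1}Qe^{-(2n+1)\alpha}]$ lies in $\mathbb{C}[[\omega]]*[F]$ for all $n\ge 2$ --- is one you have manufactured by trying to control the entire $(-2p)$-graded piece of $A(\WW_{2,3})$, and the method you propose for it does not work. In Zhu's algebra the commutator is $[a]*[b]-[b]*[a]=\sum_{i\ge 0}\binom{\mathrm{wt}\,a-1}{i}\,[a_ib]$, so $[H]*[F]-[F]*[H]$ involves only $H_iF$ with $i\ge 0$; these have conformal weight at most $\mathrm{wt}\,H+\mathrm{wt}\,F-1=12p-7$, strictly below the weight $5(3p-1)$ of the next singular vector $GQe^{-5\alpha}$ in the $(-2p)$-component. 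Hence $H_iF\in U(Vir).F$ for every $i\ge 0$ by the Virasoro decomposition of $\WW_{2,p}$ alone, the higher singular vectors never enter, and the commutator lands in $\mathbb{C}[[\omega]]*[F]$ directly; the same weight bound handles the other two commutators. The common polynomial $q$ and the coefficients $-2,2,-2$ then follow by applying the derivation $G$ (which sends $F\mapsto H\mapsto E\mapsto 0$ and descends to $A(\WW_{2,3})$), rather than by interpolation.

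The genuinely invalid step is your proposal to verify the reduction, and later to pin down $q$, by evaluating zero modes on the top components of the irreducible modules $W(h)$ and matching or interpolating. Agreement of actions on all irreducible modules determines an element of $A(\WW_{2,3})$ only modulo the Jacobson radical, and $A(\WW_{2,3})$ is not semisimple: the repeated factors of $f$ in Theorem \ref{zhu1} and the existence of logarithmic modules make this explicit. So this method can at best establish the relations in the semisimple quotient, not in $A(\WW_{2,3})$ itself, which is what the proposition asserts and what the subsequent classification argument uses; $C_2$-cofiniteness does not repair this. Replace that step with the weight-bound argument above and the proof goes through.
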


\begin{theorem}
The set
$$ \{ W(h), \ h \in S_{2,3} \}$$
provides, up to isomorphism, all irreducible modules for the vertex
operator algebra $\WW_{2,3}$.
\end{theorem}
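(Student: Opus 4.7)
The plan is to invoke Zhu theory together with the $C_2$-cofiniteness of Theorem \ref{c2-general}. Since $\WW_{2,3}$ is $C_2$-cofinite, the Zhu algebra $A(\WW_{2,3})$ is finite dimensional, every irreducible $\WW_{2,3}$-module is $\mathbb{N}$-graded (up to a shift of $L(0)$) with a nonzero top component, and Zhu's correspondence gives a bijection between isomorphism classes of irreducible $\WW_{2,3}$-modules and isomorphism classes of irreducible $A(\WW_{2,3})$-modules via $M \mapsto M(0)$. Hence it is enough to classify irreducible $A(\WW_{2,3})$-modules and match each one with some $W(h)(0)$.

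By Theorem \ref{zhu1} we have $f([\omega]) = 0$ in $A(\WW_{2,3})$. The element $[\omega]$ is central in $A(\WW_{2,3})$ (on a top component it acts by $L(0)$, which commutes with every zero-mode), hence acts on any irreducible $A(\WW_{2,3})$-module as a scalar $h$ that must be a root of $f$. Inspecting (\ref{relzhu1}) one reads off that the distinct roots of $f$ are precisely the thirteen values in $S_{2,3} = S_{2,3}^{(1)} \cup S_{2,3}^{(2)}$. Thus every irreducible $\WW_{2,3}$-module has lowest conformal weight in $S_{2,3}$, and to conclude it remains to show that for every $h \in S_{2,3}$ there is a unique irreducible $A(\WW_{2,3})$-module with $[\omega] \mapsto h$, namely $W(h)(0)$ from Theorem \ref{konstr-ired-2-p}.

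For the uniqueness step I would exploit the presentation of $A(\WW_{2,3})$ in Proposition \ref{zhu-komut-2-p}, specializing $[\omega]$ to the scalar $h$. The key additional inputs are the Zhu-algebra image of Lemma \ref{relacije}, which gives $[E]*[E]=[F]*[F]=0$ and $[E]*[F]+[F]*[E]+2[H]*[H]=0$, together with the singlet computation (Theorem \ref{CT} combined with the embedding $\overline{M(1)} \hookrightarrow \WW_{2,3}$ and Theorem \ref{zhu-singlet-2-p}) that expresses $[H]*[H]$, and therefore $[E]*[F]$, as an explicit polynomial in $[\omega]$. For $h \in S_{2,3}^{(1)}$ the polynomial $q(h)$ vanishes, so the image of $A(\WW_{2,3})$ in the quotient by $([\omega]-h)$ becomes commutative; combined with the scalar values of $[H]*[H]$ and $[E]*[F]$ at $h$, each of $[E],[F],[H]$ is forced to act by a specific scalar, giving a unique $1$-dimensional irreducible. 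For $h \in S_{2,3}^{(2)}$ one has $q(h) \neq 0$; after rescaling $[E],[F],[H]$ by $2q(h)$ they satisfy $\mathfrak{sl}_2$-like commutation relations with a Casimir pinned down by the nilpotency and the explicit value of $[H]*[H]$, which in turn forces a unique $2$-dimensional irreducible. The dimensions $1$ and $2$ match those of $W(h)(0)$ in Theorem \ref{konstr-ired-2-p}, and since the constructed modules $W(h)$ are pairwise inequivalent (they have distinct lowest weights for different $h$), one obtains the asserted bijection with $S_{2,3}$.

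The principal obstacle is the explicit identification of the polynomial $q(x)$ together with the scalar polynomial $P_H(x)$ representing $[H]*[H]$, and the verification that the zero locus of $q(x)$ coincides exactly with $S_{2,3}^{(1)}$ while $q(h)\neq 0$ for $h \in S_{2,3}^{(2)}$. This relies on the constant-term evaluations of Proposition \ref{evaluation}, the singlet Zhu algebra Theorem \ref{zhu-singlet-2-p} and bookkeeping parallel to that of \cite{AdM-triplet}. Once these identifications are in hand, the enumeration gives exactly $|S_{2,3}^{(1)}|+|S_{2,3}^{(2)}|=7+6=13$ isomorphism classes of irreducible $\WW_{2,3}$-modules, matching the explicit list $\{W(h):h\in S_{2,3}\}$.
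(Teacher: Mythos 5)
Your proposal is correct and follows essentially the same route as the paper: reduce via Zhu theory and $C_2$-cofiniteness to classifying irreducible $A(\WW_{2,3})$-modules, use $f([\omega])=0$ from Theorem \ref{zhu1} to pin the lowest weight to the thirteen values in $S_{2,3}$, and then exploit the $\mathfrak{sl}_2$-type relations of Proposition \ref{zhu-komut-2-p} together with the nilpotency $[E]^2=[F]^2=0$ from Lemma \ref{relacije} to force the top component to be the one- or two-dimensional module $W(h)(0)$. The one place where the paper is slicker than your sketch: you flag the explicit determination of $q$ and the verification that its zero locus is exactly $S_{2,3}^{(1)}$ as the principal obstacle, but the paper never needs this --- for $h\in S_{2,3}^{(2)}$ it deduces $q(h)\neq 0$ a posteriori from the existence of the two-dimensional irreducible $W(h)(0)$ constructed in Theorem \ref{konstr-ired-2-p} (a commutative specialization could not admit one), and for $h\in S_{2,3}^{(1)}$ it simply treats both subcases $q(h)\neq 0$ and $q(h)=0$, each of which forces a unique one-dimensional module, so the precise zero locus of $q$ is irrelevant.
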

\begin{proof} The proof is also similar to that of the classification result in
\cite{AdM-triplet}. It is enough to see that $W(h) (0)$, $h \in
S_{2,3}$ gives all irreducible modules for Zhu's algebra
$A(\WW_{2,3})$.
 Assume that $U$ is an irreducible $A(\WW_{2,3})$--module. Relation
$f ([\omega]) = 0$ in $A(\WW_{2,3})$ implies that
$$L(0) \vert U = h \ \mbox{Id}, \quad \mbox{for} \quad h  \in S_{2,3}.$$

Assume first that $ h \in S_{2,3} ^{(2)}$. By combining Proposition
\ref{zhu-komut-2-p} and Theorem \ref{konstr-ired-2-p} we have that
$q(h) \ne 0$. Define
$$ e= \frac{ 1}{ \sqrt{2} q(h)} E, \quad f= -\frac{1}{ \sqrt{2}q(h)} F, \quad h= \frac{1}{q(h)} H .$$

Therefore $U$ carries the structure of an irreducible,
$\mathfrak{sl}_2$--module with the property that $e^2 = f^2 = 0$  and $h
\ne 0$ on $U$. This easily implies that $U$ is $2$--dimensional
irreducible $\mathfrak{sl}_2$--module. Moreover,  as an
$A(\WW_{2,3})$--module $U$ is isomorphic to  $W(h) (0)$.

Assume next that $ h \in S_{2,3} ^{(1)}$. If $q(h) \ne 0$, as above
 we conclude that $U$ is an irreducible $1$--dimensional $sl_2$--module. Therefore $U \cong W(h) (0)$.

 If $q(h)=0$
 from Proposition \ref{zhu-komut-2-p} we have that the action of generators of $A(\WW_{2,3})$ commute on $U$. Irreducibility of
  $U$ implies that $U$ is $1$-dimensional. Since $[H], [E] ^2, [F] ^2$ must act trivially on $U$, we conclude that $[H], [E] , [F] $ also act trivially on $U$. Therefore
  $U \cong W(h) (0)$.
\end{proof}

\section{Description of the space of generalized characters for $\WW_{2,3}$}

By using Proposition \ref{korisna-pr}, Theorem \ref{zhu1} and the proof similar to that of   Corollary
3.6 in \cite{AdM-triplet} we obtain the following useful consequence.

\begin{proposition} \label{poisson} Inside the Poisson algebra $V/C_2(V)$, we have the relation $\overline{\omega}^{20}=0$.
\end{proposition}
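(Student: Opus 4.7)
The plan is to transfer the degree-$20$ relation on $[\omega]$ in Zhu's algebra (Theorem~\ref{zhu1}) down to the Poisson algebra $V/C_2(V)$ with $V=\WW_{2,3}$, paralleling Corollary~3.6 of \cite{AdM-triplet}. The key input is Proposition~\ref{korisna-pr}(iii), together with the observation that the element $H_{-11}H$ (which appears in that proposition when $p=3$) automatically lies in $C_2(V)$ since $-11\le -2$.

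By Proposition~\ref{korisna-pr}(iii) for $p=3$,
$$H_{-11} H = C\, G^{2} Q e^{-5\alpha} + u, \qquad u \in U(Vir).{\bf 1}, \quad C \ne 0.$$
Passing to $V/C_2(V)$ yields $\overline{G^{2} Q e^{-5\alpha}} = -\tfrac{1}{C}\,\bar u$. Next I would analyze $\bar u$: $u$ is a Virasoro vector of conformal weight $40$, expressible as a linear combination of PBW monomials $L(-i_1)\cdots L(-i_r){\bf 1}$ with $i_1\ge\cdots\ge i_r\ge 2$ and $\sum i_j = 40$. Since $L(-k){\bf 1}=\omega_{-k+1}{\bf 1}\in C_2(V)$ for $k\ge 3$, and $C_2(V)$ is stable (modulo itself) under further $L(-j)$-actions with $j\ge 2$, every monomial involving any index $i_j\ge 3$ drops out modulo $C_2(V)$. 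Only $L(-2)^{20}{\bf 1}\mapsto\bar\omega^{20}$ survives, so $\bar u = \lambda\bar\omega^{20}$ for some constant $\lambda$, and hence $\overline{G^{2} Q e^{-5\alpha}} = \mu\,\bar\omega^{20}$ with $\mu=-\lambda/C$.

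A short zero-mode comparison pins down $\mu\ne 0$: $o(v)$ acts on a highest weight vector of weight $h$ as $f(h)$, which is monic of degree $20$ by Theorem~\ref{zhu1}, matching the leading behaviour $o(L(-2)^{20}{\bf 1})\sim L(0)^{20}$ on the other side. To conclude, one shows $\overline{G^{2} Q e^{-5\alpha}} = 0$ in $V/C_2(V)$: as $v\in O(\WW_{2,3})$ from the proof of Theorem~\ref{zhu1}, it represents a Zhu relation, and in the triplet setting of \cite{AdM-triplet} the corresponding vanishing in the Poisson algebra is deduced via a parallel PBW reduction using the remaining Virasoro singular vectors $G^{j} Q e^{-5\alpha}$ ($0\le j\le 4$) at conformal weight $40$. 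Combining $\mu\ne 0$ with $\overline{G^{2} Q e^{-5\alpha}} = 0$ then yields $\bar\omega^{20}=0$. The main obstacle is this last vanishing step — bridging from the Zhu-algebra statement $v\in O(V)$ to the Poisson-algebra statement $\bar v = 0$ — since in general $O(V)\not\subset C_2(V)$; it must be handled by carefully adapting the corresponding argument in \cite[Cor.~3.6]{AdM-triplet} to the present structure of $\WW_{2,3}$.
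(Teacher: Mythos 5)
Your reduction is sound and follows the route the paper intends: from Proposition \ref{korisna-pr}(iii) together with $H_{-11}H\in C_2(\WW_{2,3})$ you correctly obtain $C\,\overline{G^2Qe^{-5\alpha}}+\bar u=0$, and the PBW reduction $\bar u=\lambda\,\overline{\omega}^{20}$ is right (every weight-$40$ monomial $L(-n_1)\cdots L(-n_k){\bf 1}$ other than $L(-2)^{20}{\bf 1}$ contains some $n_i\ge 3$, hence lies in $C_2$). However, the argument is not complete, and at its two remaining points it either stalls or uses an invalid inference. First, $\mu\ne 0$ cannot be deduced by comparing $o(v)=f(L(0))$ with $o(L(-2)^{20}{\bf 1})\sim L(0)^{20}$: zero modes acting on highest weight vectors detect congruences modulo $O(V)$, not modulo $C_2(V)$, and these are different subspaces --- indeed $H_{-11}H$ itself lies in $C_2(\WW_{2,3})$ yet has nontrivial zero-mode action. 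What must be shown nonzero is the coefficient $\lambda$ of $L(-2)^{20}{\bf 1}$ in the $U(Vir){\bf 1}$-component $u$ of $H_{-11}H$, and that requires controlling the leading behaviour of $o(H_{-11}H)$ (equivalently, a computation in the singlet algebra, which is where Theorem \ref{zhu-singlet-2-p} and Proposition \ref{korisna-pr} enter), not just of $o(v)$.

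Second, and more seriously, the vanishing $\overline{G^2Qe^{-5\alpha}}=0$ --- which you yourself flag as ``the main obstacle'' --- is, once $\bar v=\mu\bar\omega^{20}$ with $\mu\ne 0$ is granted, exactly equivalent to the statement being proved; deferring it to ``carefully adapting'' \cite{AdM-triplet} leaves the proof incomplete at precisely its essential point. The fact $v\in O(\WW_{2,3})$ from the proof of Theorem \ref{zhu1} does not help, since $O(V)\not\subset C_2(V)$. What is missing is a second, independent relation in $\WW_{2,3}/C_2(\WW_{2,3})$ involving the five weight-$40$ singular vectors $G^jQe^{-5\alpha}$, $0\le j\le 4$: all products $X_{-11}Y$ with $X,Y\in\{E,F,H\}$ lie in $C_2(\WW_{2,3})$, and their weight-$40$ Virasoro-descendant components over $E,F,H$ also lie in $C_2$ (the descent degree $25$ is odd, so no pure $L(-2)^k$ contribution survives), so each such product yields a relation among $\overline{G^jQe^{-5\alpha}}$ and $\overline{\omega}^{20}$; eliminating the $\overline{G^jQe^{-5\alpha}}$ from this system is the actual engine of Corollary 3.6 of \cite{AdM-triplet}, and it is the piece your write-up does not supply.
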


Next result is essentially from \cite{FGST-log}

\begin{lemma} \label{slclosure} The $SL(2,\mathbb{Z})$-closure of the space of irreducible characters is $20$-dimensional.
\end{lemma}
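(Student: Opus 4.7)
The plan is to follow the FGST-log computation explicitly, expressing the $13$ irreducible characters $\chi_{W(h)}(\tau)$, $h \in S_{2,3}$, in closed form and then tracking the action of $SL(2,\mathbb Z)$ on the resulting linear span. Because the irreducible $\WW_{2,3}$-modules are realized as submodules or subquotients of $V_{L+\frac{k}{6}\alpha}$ (with $L=\mathbb Z\alpha$, $\langle\alpha,\alpha\rangle=3$), the characters are controlled by the theta functions
\begin{equation*}
\theta_{k,6}(\tau) \;=\; \sum_{n\in \mathbb Z} q^{\,6(n+k/12)^2}, \qquad k\in \mathbb Z/12\mathbb Z,
\end{equation*}
divided by $\eta(\tau)$, together with the affine-type numerators $(\theta_{k,6}-\theta_{-k,6})/\eta$ that pick out the irreducible Virasoro minimal model constituents at $c_{2,3}=0$. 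One would list these explicitly, grouping together the seven characters with $1$-dimensional top component (those corresponding to $h\in S_{2,3}^{(1)}$) and the six with $2$-dimensional top component (those for $h\in S_{2,3}^{(2)}$).

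The next step is to compute the modular behavior. The $T$-action is diagonal, $\chi_{W(h)}(\tau+1)=e^{2\pi i(h-c/24)}\chi_{W(h)}(\tau)$, so it preserves the span and contributes nothing beyond phases. The real content is in the $S$-transformation. Under $S:\tau\mapsto-1/\tau$, each $\theta_{k,6}/\eta$ transforms into a linear combination of the $\theta_{\ell,6}/\eta$; however, because several of the $W(h)$ are subquotients rather than Verma-type modules, their characters are obtained as \emph{differences} of theta quotients at indices that are $S$-related, and these differences pair up into combinations on which $S$ produces an extra factor of $\tau$. Concretely, one finds seven linearly independent functions of the form $\tau \cdot \Phi_j(\tau)$, $j=1,\dots,7$, that must be adjoined to the span of the $13$ irreducible characters in order to close under $S$. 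A direct count then gives $13+7=20$.

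The verification of closure requires two checks: first, that $S^2=1$ and $(ST)^3=1$ hold on the enlarged space (so that no further generators appear), and second, that the $13+7$ functions are linearly independent over $\mathbb C$. For the first, the computation reduces to the known Jacobi imaginary transformation for the $\theta_{k,6}$ and the identity $\eta(-1/\tau)=\sqrt{-i\tau}\,\eta(\tau)$, which produces precisely one extra power of $\tau$ where needed and no higher-order terms (this is the structural reason the closure stops at level one in $\tau$, as opposed to involving $\tau^2$ or higher). The second follows by inspecting the $q$-expansions: the leading exponents of the $13$ characters are pairwise incongruent modulo~$1$ in the cases that matter, while the seven $\tau\Phi_j$ are distinguished by their $\tau$-prefactor.

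The main obstacle is the bookkeeping in the second paragraph: identifying precisely which pairs of irreducible characters combine under $S$ to produce the $\tau$-linear pseudo-characters, and verifying that exactly seven such independent combinations arise. This is really a finite linear-algebra problem once the character formulas are written down, but it is delicate because several $W(h)$ share the same value of $h\pmod{\mathbb Z}$, so one must separate their $S$-images carefully. Since the analogous computation was already performed in \cite{FGST-log} for the $(2,3)$ case, the argument amounts to importing their result and reconciling it with our realization of the $W(h)$ inside $V_L$; Proposition \ref{poisson} (which bounds the level of $\bar\omega$-nilpotency by $20$) provides an independent consistency check, as it implies via the standard $C_2$-cofinite modular-invariance machinery of \cite{Miy} that the space of generalized characters cannot exceed dimension $20$.
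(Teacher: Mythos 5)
The paper offers no proof of this lemma at all --- it is imported wholesale from \cite{FGST-log} --- so your proposal is really an attempt to reconstruct that computation. In outline (theta-quotient character formulas, diagonal $T$-action, explicit $S$-transformation, adjoining $\tau$-dependent pseudo-characters until the span closes, then a linear-independence check) this is the right reconstruction, and your final count $13+7=20$ agrees with the general FGST formula $\tfrac{1}{2}(3p-1)(3q-1)$ evaluated at $(q,p)=(2,3)$.

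There is, however, a concrete error in your structural claim that the closure ``stops at level one in $\tau$'' and involves no $\tau^2$ terms. That is true for the $(1,p)$ triplet models but fails for $(q,p)$ with $q,p\ge 2$: the decomposition of the space of torus amplitudes in \cite{FGST-log} is $R_{\min}\oplus R_{\mathrm{proj}}\oplus(\mathbb{C}^2\otimes R_{\square})\oplus(\mathbb{C}^3\otimes R_{\boxtimes})$, where $\mathbb{C}^2$ and $\mathbb{C}^3$ are the $SL(2,\mathbb{Z})$-modules realized on $(1,\tau)$ and $(1,\tau,\tau^2)$, and for $(2,3)$ the last block is nonzero since $\dim R_{\boxtimes}=\tfrac{(p-1)(q-1)}{2}=1$. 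The count is $1+6+2\cdot 5+3\cdot 1=20$: of the seven functions you must adjoin, six are $\tau$-linear and one is genuinely $\tau^2$-multiplied. This is not cosmetic: the $\tau^2$ sector reflects the rank-three Jordan blocks of $L(0)$ (visible in this paper in the cube $x^3$ dividing $f(x)$ in Theorem \ref{zhu1} and in the remark on Jordan blocks of size three), and it is precisely one of the features distinguishing $\WW_{2,p}$ from the triplet $\mathcal{W}(p)$. Your proposed verification that $S^2=1$ on the span of the characters and the $\tau\Phi_j$ would in fact fail to close: applying $S$ to the $\tau$-linear partner attached to the vacuum block produces the $\tau^2$ term, so the argument as written would have to be corrected at that step even though the final dimension happens to come out the same. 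The appeal to Proposition \ref{poisson} as an upper-bound consistency check is fine and matches how the paper uses it in the theorem that follows.
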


Proposition \ref{poisson}, Lemma \ref{slclosure} and the results from \cite{AdM-2009-3} now give

\begin{theorem} Any generalized $\WW_{2,3}$ character satisfies a unique modular differential equation of order $20$.
Consequently, the space of generalized characters is precisely $20$-dimensional and coincides with the space in Lemma \ref{slclosure}.
\end{theorem}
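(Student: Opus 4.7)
The plan is to follow the standard Miyamoto--Zhu strategy for producing modular differential equations from Poisson-algebra relations, combined with the fact that every generalized character must lie in a space of the correct dimension forced from both sides.

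First, I would exploit Proposition \ref{poisson}, which says that $\overline{\omega}^{20}=0$ in $\WW_{2,3}/C_2(\WW_{2,3})$. By the machinery developed in \cite{AdM-2009-3} (extending work of Zhu and Miyamoto), any such vanishing relation of the form $\overline{\omega}^{n}=0$ can be lifted to a modular invariant relation among the Zhu-type recursive operators acting on $q$-traces, which yields a modular differential equation of order $n$ satisfied by every (generalized) character of $\WW_{2,3}$. Concretely, one rewrites $\omega^{20}$ modulo $C_2(\WW_{2,3})$ as a combination of terms of the form $u_{-2}v$ and uses Zhu's trace recursion to convert the vanishing into an identity among quasi-modular forms times $L(0)$-derivatives of the character. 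The outcome is a modular linear ODE of order exactly $20$ in the variable $q = e^{2 \pi i \tau}$, and the space of generalized characters is contained in its solution space, hence has dimension at most $20$.

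Next, I would use Lemma \ref{slclosure}, which supplies the reverse inequality: the $SL(2,\mathbb{Z})$-closure of the linear span of the $13$ irreducible characters is already $20$-dimensional. Since $\WW_{2,3}$ is $C_{2}$-cofinite (Theorem \ref{c2-general}), the results of Miyamoto guarantee that the space of generalized characters is $SL(2,\mathbb{Z})$-invariant and contains all irreducible characters, and hence also contains their full $SL(2,\mathbb{Z})$-closure. This gives the lower bound of $20$, so the two dimensions match and the two spaces coincide.

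Finally, uniqueness of the order-$20$ modular differential equation follows from a standard Wronskian/minimality argument: a monic modular ODE of minimal order annihilating a $20$-dimensional space of holomorphic solutions on the upper half-plane is uniquely determined by that space, since its coefficients are recovered as ratios of Wronskian determinants of any basis. The main obstacle I expect is the first step, namely the careful bookkeeping needed to pass from the Poisson relation $\overline{\omega}^{20}=0$ to a genuine modular ODE of exactly the same order; here one has to invoke the full strength of \cite{AdM-2009-3} to ensure that the resulting equation does not drop order and that its coefficients are honest modular forms. The remaining dimension-counting and Wronskian arguments are then formal.
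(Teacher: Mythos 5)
Your proposal is correct and follows essentially the same route as the paper, which likewise derives the order-$20$ modular ODE from Proposition \ref{poisson} via the machinery of \cite{AdM-2009-3} and \cite{Miy}, and then matches the resulting upper bound against the $20$-dimensional $SL(2,\mathbb{Z})$-closure of the irreducible characters from Lemma \ref{slclosure}. The paper states this as a one-line deduction from those three ingredients, so your write-up simply makes explicit the standard details (trace recursion, the two-sided dimension count, and the Wronskian uniqueness argument) that the authors leave implicit.
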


\section{On classification of irreducible $\WW_{2,p}$-modules, $p \geq 5$}

\label{klasifikacija-gen}
In this section we briefly discuss the classification of irreducible modules for the vertex operator algebra  $\WW_{2,p}$. One can apply the similar methods as in Section
\ref{klasifikacija-p3}. The main technical problem is  in the determination of the polynomial $\widetilde{G_p}(t)$ from Proposition \ref{evaluation}.

We  have the following conjecture about polynomial $\widetilde{G_p}(t)$:

\begin{conjecture} \label{conj-g}
$$\widetilde{G_p}(t) = B_p { t \choose 3p-1} {t+ p/2 \choose 3p-1} { t+p \choose 3p-1} {t + 3 p /2 \choose 3p-1} { t +2p \choose 3p-1} \quad (B_p \ne 0).$$
\end{conjecture}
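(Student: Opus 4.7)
My plan is to establish the conjectured identity following the strategy that worked for Theorem~\ref{CT}: polynomial degree control, vanishing on an explicit zero set, and a normalizing constant computation. First, expanding $(1+x_i)^t = \sum_{k\ge 0}\binom{t}{k}x_i^k$ shows that the coefficient of any Laurent monomial in the integrand is a polynomial in $t$; the constraint for a nonzero contribution to $\mathrm{Res}_{x_1,\ldots,x_5}$ is that the total $x$-degree equals $-5$, which combined with the degrees $-25p$ from $(x_1\cdots x_5)^{-5p}$, $10p$ from $\Delta^p$, and $0$ overall from the two logarithms, forces the total degree $d_1+\cdots+d_5 = 15p-5$ extracted from the five $(1+x_i)^t$ factors. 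Hence $\widetilde{G_p}(t)$ is a polynomial of degree at most $15p-5$, matching the degree of the right-hand side.

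Second, I would prove that $\widetilde{G_p}(t)$ vanishes with the correct multiplicities at every zero of the conjectured product. By Proposition~\ref{evaluation}(2), $\widetilde{G_p}(\lambda(\alpha))$ is the scalar by which the zero mode $o(v)$ for $v = G^{2}Q e^{-5\alpha} \in \WW_{2,p}$ acts on the lowest weight vector of the Feigin--Fuchs module $M(1,\lambda)$. For the integer zeros arising from the three factors $\binom{t}{3p-1}\binom{t+p}{3p-1}\binom{t+2p}{3p-1}$, I would use the structural description of singular and cosingular vectors from Theorem~\ref{str-ff-1} together with the explicit formula $u_{(j,n)} = G^{j} Q e^{-(n+1)\alpha}$: at those $\lambda$ for which $M(1,\lambda)$ sits inside the Felder embedding diagram with enough singular vectors in the relevant positions, $o(v)$ is forced to annihilate the top component. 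Matching the three integer shifts $0,\,p,\,2p$ to the three ``rungs'' of the embedding diagram then produces the triple zeros on $\{0,\ldots,p-2\}$, the double zeros on $\{-p,\ldots,-1\}\cup\{p-1,\ldots,2p-2\}$, and the simple zeros on $\{-2p,\ldots,-p-1\}\cup\{2p-1,\ldots,3p-2\}$, accounting for $9p-3$ roots with multiplicity. For the half-integer zeros coming from $\binom{t+p/2}{3p-1}\binom{t+3p/2}{3p-1}$, I pass to the $\sigma$-twisted sector via Proposition~\ref{Gtw-der}; the analogous singular-vector analysis for the twisted Feigin--Fuchs modules $V_{L+(\ell+1/2)\alpha/p}$ yields the remaining $6p-2$ roots, with the double zeros concentrated on $\{-p/2,\ldots,(3p-4)/2\}$ and the simple zeros on the two tails.

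Third, since both sides are polynomials of degree $15p-5$ vanishing on the same multiset of $15p-5$ points, they agree up to a multiplicative constant $B_p$, and it remains to show $B_p\ne 0$. I would evaluate $\widetilde{G_p}$ at a specific value outside the zero set: a natural choice is $t_0 = 3p-1$, where the right-hand side is manifestly nonzero and where the defining constant term reduces to a finite combinatorial sum whose positivity (or sign-definite structure) can be extracted from a Dyson/Morris-type expansion of $\Delta^{p}\prod(1+x_i)^{3p-1}$ combined with a single nonzero contribution from the Taylor coefficients of the two logarithmic factors.

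The main obstacle is the multiplicity bookkeeping in the second step: forcing a triple zero on $\{0,\ldots,p-2\}$, rather than merely a simple zero, requires exhibiting three independent singular-vector relations per point, since the three integer shifts $0,\,p,\,2p$ correspond to three different embeddings of the Felder complex and the associated vanishing arguments are not manifestly independent. An alternative route that would bypass this delicate counting is the parametric deformation
\[
\ln(1-x_2/x_1)\ln(1-x_4/x_3) \;=\; \left.\frac{\partial^{2}}{\partial s_1\,\partial s_2}\right|_{s_1=s_2=0}(1-x_2/x_1)^{s_1}(1-x_4/x_3)^{s_2},
\]
recognizing the resulting two-parameter constant term $F(t,s_1,s_2)$ as a deformation of a Selberg--Morris-type identity admitting a closed product formula in a neighborhood of $(s_1,s_2)=(0,0)$, and then differentiating at $s_1=s_2=0$ to recover the conjectured factorization directly; this would reduce the conjecture to a classical (albeit nontrivial) identity in the theory of Selberg-type constant terms.
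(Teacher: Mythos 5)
This statement is labeled a \emph{conjecture} in the paper: the authors give no proof, only a computer verification for $p=3$ and $p=5$, and explicitly defer it to future work. So the relevant question is whether your proposal actually closes the gap, and it does not. Your first step (degree count: $\widetilde{G_p}$ has degree at most $15p-5$ in $t$, matching the right-hand side) and your bookkeeping of the conjectured zero multiset ($9p-3$ integer roots and $6p-2$ half-integer roots with multiplicity) are correct and mirror the architecture of the paper's proof of Theorem \ref{CT}. But the entire content of the conjecture lives in your second and third steps, which are stated in the conditional (``I would prove\ldots'', ``can be extracted\ldots'') and never carried out. In particular: (a) the representation-theoretic argument that $o(v)$ annihilates the top component of $M(1,\lambda)$ at a given weight only yields a \emph{simple} zero of $\widetilde{G_p}$ at that $t$; to force the double zeros at $4p$ integer points and $2p-1$ half-integer points, and especially the \emph{triple} zeros at $t\in\{0,\ldots,p-2\}$, you must show that first and second $t$-derivatives of the residue also vanish. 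In the proof of Theorem \ref{CT} the authors handle the (easier) first-derivative vanishing by a separate combinatorial argument, differentiating under the residue to produce $\ln(1+x_i)$ factors; nothing analogous is exhibited here for $F'$ or $F''$, and as you yourself concede, the three ``embeddings'' do not manifestly give independent conditions. (b) The nonvanishing $\widetilde{G_p}(3p-1)\neq 0$ is asserted to follow from ``a Dyson/Morris-type expansion'' but no such evaluation is given; in the $k=1$ case (Theorem \ref{conjm1}) this already required a nontrivial harmonic-number identity of Chu--Fu, and no analogue is known here.

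Your proposed alternative — writing the two logarithms as $\partial_{s_1}\partial_{s_2}\big|_{0}(1-x_2/x_1)^{s_1}(1-x_4/x_3)^{s_2}$ and invoking a closed product formula for the two-parameter deformed constant term — is an appealing idea, but it presupposes exactly the Selberg--Morris-type identity that is not known to exist for odd powers of the Vandermonde with asymmetric deformation parameters; reducing an open conjecture to another unproved identity is not a proof. In short, the proposal is a reasonable plan of attack consistent with the paper's methods, but the decisive steps (higher-order vanishing and the normalization constant) remain unestablished, so the conjecture is not proved.
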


 \begin{remark}
We verified   Conjecture \ref{conj-g} for $p=3$ and $p=5$ by using Mathematica package.
Since polynomial $\widetilde{G_p}(t)$ is important in the representation theory of the vertex algebra $\WW_{2,p}$ we plan to return to this Conjecture in our future
work.
\end{remark}

We shall now assume that Conjecture \ref{conj-g} holds. Since $v = G ^2 Q e ^{-5 \a} \in O( \WW_{2,p})$ and   $o(v) v_{\lambda } =\widetilde{G_p}(t) v_{\lambda}$,  we get
the following important relation in Zhu's algebra
$A( \WW_{2,p})$:
\begin{theorem} \label{klasif-general} Assume  that Conjecture \ref{conj-g} holds. Then in Zhu's algebra $A( \WW_{2,p})$ we have:

$$f_p([\omega] ) = 0,$$
where
\bea f_p(x) &=&\left( \prod _{i =1} ^{3p-1} (x - h_{1,i} )  \right) \left(  \prod_{i =1} ^{
 \tfrac{3p-1}{2}
} (x - h_{1,2p-i} )\right) \left(\prod_{i =1} ^{3p-1} (x - h_{2,i} )\right )  \nonumber \\
&=& \left(\prod_{i =1} ^{ \tfrac{p-1}{2}} (x- h_{1,i}) \right)^3  \left(\prod_{i = p } ^{ 2p -1} (x- h_{1,i}) \right)^2 \left(\prod_{i =1} ^{p-1} (x - h_{2,i} )\right )
^2   \nonumber \\
&& \cdot (x-h_{p,2}) \left(\prod_{i=2p} ^{ 3 p -1} (x- h_{1,i}) \right) \left(\prod_{i =2 p } ^{3 p-1} (x - h_{2,i} )\right ).  \eea
\end{theorem}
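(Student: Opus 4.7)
The plan is to extend the strategy of Theorem \ref{zhu1} to general $p$, using as the new input the explicit product formula supplied by Conjecture \ref{conj-g}. Set $v := G^2 Q e^{-5\alpha} \in \WW_{2,p}$. This is a Virasoro singular vector of conformal weight $15p-5$; since $G$ shifts $\alpha$-charge by $+2$ and $Q e^{-5\alpha}$ has $\alpha$-charge $-4$, the element $v$ has vanishing $\alpha$-charge, so $v \in \overline{M(1)}$. The argument splits into two essentially independent calculations: (1) establish $[v] = 0$ in $A(\WW_{2,p})$; and (2) identify $[v]$ with a nonzero scalar multiple of $f_p([\omega])$ in $A(\WW_{2,p})$.

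For (1), I would follow the Theorem \ref{zhu1} template. First show $Q e^{-5\alpha} \in O(\WW_{2,p})$ by exploiting that $Q e^{-5\alpha}$ is a Virasoro singular vector contained in the maximal proper ideal $\mathcal{R}_{2,p}$. Then, since $G$ is a derivation of $\WW_{2,p}$ that preserves conformal weight (Theorem \ref{G-der}(i),(iii)), it preserves $O(\WW_{2,p})$, and we deduce $v = G^2 Q e^{-5\alpha} \in O(\WW_{2,p})$. The principal technical obstacle is adapting the clean argument $\mathcal{R}_{2,3} = \WW_{2,3}.\omega$ (which worked in the $p=3$ case because $L^{Vir}(0,0) \cong \mathbb{C}$) to the general case, where $L^{Vir}(c_{2,p},0)$ is a nontrivial minimal model.

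For (2), the key is that $v \in \overline{M(1)}$ and the description $A(\overline{M(1)}) \cong \mathbb{C}[x,y]/\langle y^2 - P(x)\rangle$ from Theorem \ref{zhu-singlet-2-p}: every element of the Zhu algebra of the singlet is uniquely of the form $P_v([\omega]) + Q_v([\omega])[H]$. I would then evaluate both sides on a generic Feigin--Fuchs lowest-weight vector $v_\lambda$ with $t := \lambda(\alpha)$: Proposition \ref{evaluation}(2) together with Conjecture \ref{conj-g} yields $o(v) v_\lambda = \widetilde{G_p}(t)\, v_\lambda$, Theorem \ref{CT} yields $o(H) v_\lambda = A_p {t \choose 2p-1}{t+p/2 \choose 2p-1}{t+p \choose 2p-1}\, v_\lambda$, and $o(\omega) v_\lambda = h(t)\, v_\lambda$ with $h(t) = t(t-p+2)/(2p)$. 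A direct computation, using that $(-1)^{3p-1} = 1$ while $(-1)^{2p-1} = -1$ for odd $p$, shows that the involution $t \mapsto p-2-t$ preserves $\widetilde{G_p}(t)$ while negating the product of three binomials appearing in $o(H) v_\lambda$; matching symmetric and antisymmetric components in the equation $\widetilde{G_p}(t) = P_v(h(t)) + Q_v(h(t)) \cdot A_p {t \choose 2p-1}{t+p/2 \choose 2p-1}{t+p \choose 2p-1}$ then forces $Q_v \equiv 0$ and $P_v(h(t)) = \widetilde{G_p}(t)$.

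To conclude, I would identify $P_v$ as a scalar multiple of $f_p$ by a zero-matching calculation. From the identity $h(t) - h_{r,s} = [(2t-(p-2))^2 - (pr-2s)^2]/(8p)$, one checks that ${t \choose 3p-1}{t+2p \choose 3p-1}$, ${t+p/2 \choose 3p-1}{t+3p/2 \choose 3p-1}$, and ${t+p \choose 3p-1}$ equal, up to nonzero constants, the products $\prod_{i=1}^{3p-1}(h(t) - h_{1,i})$, $\prod_{i=1}^{3p-1}(h(t) - h_{2,i})$, and $\prod_{i=1}^{(3p-1)/2}(h(t) - h_{1,2p-i})$ respectively. Multiplying gives $\widetilde{G_p}(t) = c \cdot f_p(h(t))$ for some $c \neq 0$, so $P_v(x) = c f_p(x)$ and $[v] = c f_p([\omega])$ in $A(\overline{M(1)})$. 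Since $O(\overline{M(1)}) \subseteq O(\WW_{2,p})$, the same identity persists in $A(\WW_{2,p})$; combining with $[v] = 0$ from (1) gives $c f_p([\omega]) = 0$ in $A(\WW_{2,p})$, and as $c \neq 0$ we conclude $f_p([\omega]) = 0$.
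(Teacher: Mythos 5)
Your proposal follows essentially the same route as the paper: the paper's own (very terse) argument is exactly that $v=G^2Qe^{-5\alpha}\in O(\WW_{2,p})$ together with $o(v)v_\lambda=\widetilde{G_p}(t)v_\lambda$ and Conjecture \ref{conj-g} yields the relation, and your zero-matching $h(t)=h_{r,s}\iff t=\tfrac{p-2\pm(pr-2s)}{2}$ correctly identifies $\widetilde{G_p}(t)$ with a nonzero multiple of $f_p(h(t))$, while your symmetry argument under $t\mapsto p-2-t$ is a legitimate way to see that $[v]$ is a polynomial in $[\omega]$ alone. The only soft spot is that membership of $Qe^{-5\alpha}$ in the maximal ideal does not by itself give membership in $O(\WW_{2,p})$ (one needs the explicit $u\circ_n w$ computations as in the triplet case), but the paper asserts this step with no more justification than you do.
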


\vskip 5mm

By using same methods as in Section \ref{klasifikacija-p3}, one can conclude   that  $\WW_{2,p}$ has $ 4 p + \frac{p-1}{2}$ irreducible modules.  The list of irreducible
modules includes $\frac{p-1}{2}$ minimal models for the Virasoro algebra: $$L  (c_{2,p}, h_{1,i}), \quad (1 \le i \le \tfrac{p-1}{2}),$$  and $4 p$ modules $W_p(h)$,
parameterized by  lowest weights
 $$h \in \{ h_{1,i}, h_{2,j}, h_{2,k} \ \vert \ \ p \le i \le 3p-1, \ 1 \le j \le p, \ 2p \le k \le 3p-1  \}. $$

\begin{remark} The relation $f_p([\omega])=0$ also indicates on the   existence of $ {\N}$--graded logarithmic $\WW_{2,p}$--modules whose top components
 have generalized conformal weights $h_{1,i}$  ( $1 \le i \le 2p$) or $h_{2,i}$ ( $1 \le i \le p-1$) with Jordan blocks, with respect to $L(0)$, of size two or three.
 Some  of these logarithmic modules for $\WW_{2,p}$ were constructed explicitly in \cite{AdM-2009}.
\end{remark}

Let $k \geq 0$ and also, let $p \geq 3$. Consider
$$o(G^{k}Qe^{-2k-1}) v_{\lambda}=(-1)^k E_{k,p}(t) v_{\lambda}.$$
Then
$$E_{k,p}(t)
={\rm Res}_{x_1,...,x_{2k+1}} \frac{\Delta(x_1,...,x_{2k+1})^p}{
(x_1 \cdots x_{2k+1})^{(2k+1)p} } \prod_{i=1}^k {\rm ln}
(1-x_{2i}/x_{2i-1})
 \prod_{i=1}^{2k+1} (1+x_i)^t.$$

\begin{conjecture} Let $k \geq 0$, and let also $p \geq 1$ be odd. Then
$${\rm Res}_{x_1,...,x_{2k+1}} \frac{\Delta(x_1,...,x_{2k+1})^p}{ (x_1 \cdots
x_{2k+1})^{(2k+1)p} } \prod_{i=1}^k {\rm ln} (1-x_{2i}/x_{2i-1})
\prod_{i=1}^{2k+1} (1+x_i)^t$$
$$=\lambda_{k,p} \prod_{i=0}^{2k} {t+ \frac{pi}{2} \choose
(k+1)p-1},$$ where $\lambda_{k,p} \neq 0$ is a constant not
depending on $t$.
\end{conjecture}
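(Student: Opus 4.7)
My plan is to treat both sides as polynomials in $t$, match their degrees, identify the $(2k+1)((k+1)p-1)$ zeros, and compare leading coefficients, extending to arbitrary $k$ the strategy used to prove Theorem~\ref{CT} (which is the $k=1$ case of the same identity).

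\emph{Polynomiality and degree.} Expanding $(1+x_i)^t = \sum_{m\geq 0}\binom{t}{m}x_i^m$ and $-\ln(1-x_{2i}/x_{2i-1}) = \sum_{n\geq 1}\frac{1}{n}(x_{2i}/x_{2i-1})^n$, the multivariable residue retains only finitely many monomials, so the left-hand side is a finite $\mathbb{Q}$-linear combination of products $\prod_i \binom{t}{m_i}$ and is therefore a polynomial in $t$. Writing $d_i$ for the $x_i$-degree contributed by a monomial of $\Delta(x_1,\dots,x_{2k+1})^p$ (so $\sum_i d_i = p\binom{2k+1}{2}$) and $n_j\geq 1$ for the exponent chosen in the $j$-th logarithmic factor, the residue conditions $d_i + m_i \pm n_{\lceil i/2\rceil} = (2k+1)p-1$ sum (the $\pm n_j$'s cancelling pairwise) to $\sum_i m_i = (2k+1)((k+1)p-1)$, matching the degree of the right-hand side.

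\emph{Zeros.} One must locate the $(2k+1)((k+1)p-1)$ roots $t = j - pi/2$, $0\leq i\leq 2k$, $0\leq j\leq (k+1)p-2$, of the RHS on the LHS. For the integer row $i=0$ with $0\leq j\leq p-2$, vanishing is immediate from the degree count applied to the ``free'' variable $x_{2k+1}$: it appears only through $\Delta^p$ (of $x_{2k+1}$-degree at most $2kp$) and through $(1+x_{2k+1})^t$, so the coefficient of $x_{2k+1}^{-1}$ cannot be nonzero when $t<p-1$. The remaining zeros are the hard part. I would combine (a) the reciprocal substitution $x_i\mapsto 1/x_i$, which (using that $p$ is odd, so $\Delta^p$ is alternating) preserves the integrand up to an explicit monomial and produces a functional equation $\widetilde F_{k,p}(t) = \pm\, \widetilde F_{k,p}(c_{k,p}-t)$ pairing each integer zero with its reflection, with (b) a peeling induction on $k$: isolate the $x_{2k+1}$-integration as a sum of residues and reduce to the conjecture for $k-1$ variables and a shifted parameter. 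The half-integer shifts $t=-pi/2$ for odd $i$ should emerge naturally from the rescaling by $(1+x)$ that accompanies such a residue extraction.

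\emph{Leading coefficient and main obstacle.} Once the zero set is matched, LHS and RHS differ by a constant $\lambda_{k,p}$, which one extracts by comparing top-degree coefficients in $t$; this reduces to a purely combinatorial (non-logarithmic) Morris--Macdonald constant-term evaluation for the root system $A_{2k}$, whose non-vanishing then follows from the induction and positivity. The main obstacle is the zero-matching: establishing the full set of zeros uniformly in $p$ and $k$ --- in particular, the half-integer row --- amounts to a genuine logarithmic extension of the classical Dyson/Selberg/Morris identities. The authors' Mathematica verification for $p=3,5$ confirms the conjecture in those cases but does not indicate a conceptual mechanism, and I expect a proof for general $p$ to require substantial new input, perhaps via Gessel--Xin style formal Laurent-series techniques or a representation-theoretic interpretation of the logarithmic factors as traces over suitable modules.
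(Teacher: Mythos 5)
You should first be aware that the paper offers no proof of this statement: it is explicitly a conjecture, established only for $k=0$ (trivially) and $k=1$ (Theorem \ref{conjm1}), with the $k=2$ case being Conjecture \ref{conj-g}, checked by computer for $p=3,5$. So there is no proof in the paper to compare against; the question is whether your plan would close the general case, and it would not. Your degree count $\sum_i m_i=(2k+1)((k+1)p-1)$ is correct, and the overall architecture (polynomiality, zero-matching, one extra evaluation point) is exactly the authors' $k=1$ strategy. But both load-bearing steps remain open. On the zeros: your free-variable argument for the integer row only yields the $p-1$ zeros $0\le t\le p-2$, whereas the full row $0\le t\le (k+1)p-2$ requires the symmetrized count of Lemma \ref{app-lemma1} (bounding $m_i\le t$ for all $2k+1$ variables at once). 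More seriously, for $k\ge 2$ the rows $t+pi/2$ with $i$ of fixed parity overlap, so the right-hand side has integer zeros of multiplicity up to $k+1$ and half-integer zeros of multiplicity up to $k$; one must therefore prove vanishing of higher derivatives at both integer and half-integer points. The paper's order-two twisted-module device produces only the single simple half-integer row needed for $k=1$, and neither your reciprocal-substitution functional equation nor the proposed "peeling induction" is developed far enough to show it delivers these multiplicities.

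The second gap is a mischaracterization rather than an omission: the leading coefficient is not "a purely combinatorial (non-logarithmic) Morris--Macdonald constant-term evaluation." Evaluating at the top point (the analogue of $t=2p-1$ in Lemma \ref{app-lemma3}) retains all $k$ logarithmic factors and produces iterated binomial--harmonic sums; for $k=1$ this is the Chu--Fu identity, and for general $k$ its closed-form evaluation and non-vanishing is precisely the content of the paper's final conjecture (the generalization of Corollary \ref{log-dyson3}), which is itself open. In short, your proposal is a faithful and internally consistent extrapolation of the $k=1$ proof, and you honestly flag the zero-matching as the obstacle, but the two deferred steps constitute the entire difficulty of the conjecture, and one of them is incorrectly presented as reducing to a known identity.
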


For $k=0$, we clearly have
$$o(Qe^{-\alpha})v_{\lambda}= {t \choose p-1}v_{\lambda}.$$
The $k=1$ case of the conjecture was verified in Theorem
\ref{conjm1} below. The $k=2$ case is equivalent to Conjecture
\ref{conj-g}

\section{Appendix: Some constant term identities}

In this part we prove an interesting (constant term) identity by
combining combinatorial and representation theoretic methods. We
often provide two different proofs for the same result to make this
part accessible both to combinatorists and algebraists.

\begin{theorem} \label{conjm1} Let $p \geq 1$ be odd. Then
$${\rm Res}_{x_1,x_2,x_3} \frac{1}{(x_1 x_2 x_3)^{3p}} \  {\rm ln}\left(1-\frac{x_2}{x_1}\right) (x_1-x_2)^p (x_1-x_3)^p(x_2-x_3)^p
(1+x_1)^t (1+x_2)^t(1+x_3)^t$$
$$=A_p {t \choose 2p-1} {t+p \choose 2p-1} {t + p/2 \choose 2p-1},$$
where
$$A_p= \frac{(-1)^{(p-1)/2}}{6} \frac{(3p)! (\frac{p-1}{2})!^3 }{ {3p-1 \choose 2p-1} {\frac{5}{2}p-1 \choose 2p-1} (p!)^3 (\frac{3p-1}{2})! }.$$
\end{theorem}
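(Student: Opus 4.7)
The strategy is to show both sides of the identity are polynomials in $t$ of degree $6p-3$ and match them via their zeros and leading coefficient. Expand $\ln(1-x_2/x_1)=-\sum_{k\ge 1}\frac{1}{k}(x_2/x_1)^k$ and $(1+x_i)^t=\sum_{c_i\ge 0}\binom{t}{c_i}x_i^{c_i}$, turning the residue on the left into a finite rational-weighted sum of terms $\binom{t}{c_1}\binom{t}{c_2}\binom{t}{c_3}$. Since $(x_1-x_2)^p(x_1-x_3)^p(x_2-x_3)^p$ has total degree $3p$, the log expansion has total degree zero, and we extract the monomial $x_1^{3p-1}x_2^{3p-1}x_3^{3p-1}$, the constraint $c_1+c_2+c_3=6p-3$ holds in every surviving term. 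Hence the LHS is a polynomial in $t$ of degree at most $6p-3$, matching the RHS.

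The RHS vanishes with multiplicity at the $6p-3$ points $t\in\{0,1,\ldots,2p-2\}\cup\{-p,-p+1,\ldots,p-2\}\cup\{-p/2,-p/2+1,\ldots,3p/2-2\}$. For integers $t\in\{0,\ldots,p-2\}$ the constraints $c_3\ge p-1$ and $c_1\ge p$ (the latter using $k\ge 1$ in the log expansion) force $\binom{t}{c_3}=\binom{t}{c_1}=0$ in every term, yielding the required double zeros. For the remaining integer zeros in $\{p-1,\ldots,2p-2\}$ and $\{-p,\ldots,-1\}$, the binomial vanishings restrict the surviving $(c_1,c_2,c_3)$ to a narrow range; the residue extraction in the $x_1,x_2$ variables, combined with summation over $k$ with the $1/k$ weights via Vandermonde-Chu-type binomial identities, forces vanishing. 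The half-integer zeros are handled through the $\sigma$-twisted framework of Theorem \ref{Gtw-der-gen} and Proposition \ref{Gtw-der}: the scalar $H(0)v_\lambda$ at half-integer $t=\lambda(\alpha)$ admits a reinterpretation via the twisted screening $G^{tw}$ acting on a $\sigma$-twisted Fock module, where the Feigin-Fuchs structure of the twisted module at these weights forces $H(0)v_\lambda=0$.

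Finally, matching the $t^{6p-3}$ coefficient determines $A_p$. Replacing each $\binom{t}{c_i}$ by its leading term $t^{c_i}/c_i!$ reduces the computation to a $1/k$-weighted sum that reassembles into a Dyson-type constant term with logarithmic weighting; applying Dyson's identity for $n=3$ and evaluating the resulting harmonic sum in closed form produces the stated value of $A_p$. The principal obstacle is establishing the half-integer vanishings and the middle-range integer vanishings, where a purely combinatorial proof would require logarithmic enhancements of the Selberg/Dyson identities of \cite{An}; the $\sigma$-twisted representation theory developed earlier is what makes at least the half-integer step clean.
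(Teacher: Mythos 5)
Your overall architecture --- both sides are polynomials in $t$ of degree at most $6p-3$, so it suffices to verify $6p-3$ zeros (with multiplicity) plus one normalization --- is exactly the paper's. Your treatment of the double zeros at $t\in\{0,\dots,p-2\}$ (two of the three $t$-binomial factors vanish term-by-term in the expansion) is correct and arguably cleaner than the paper's direct computation of $F'(p,t)$, and the half-integer zeros via $G^{tw}$ acting on $\sigma$-twisted modules is precisely the paper's route. However, there are two genuine gaps. The first concerns the negative integer zeros $t\in\{-p,\dots,-1\}$. Your mechanism for integer zeros is the vanishing of ${t \choose c}$, but for a negative integer $t$ and $c\ge 0$ the polynomial ${t \choose c}$ never vanishes, so ``binomial vanishings restrict the surviving $(c_1,c_2,c_3)$'' gives nothing in that range, and no concrete identity is indicated that would force the cancellation. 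The paper obtains these $p$ zeros from representation theory: $H(0)$ is skew-symmetric for the contragredient pairing and $M(1,\lambda)^{\circ}\cong M(1,p-\lambda-2)$, yielding the functional equation $F(p,t)=-F(p,p-t-2)$, which transports the zeros at $\{p-1,\dots,2p-2\}$ to $\{-p,\dots,-1\}$. Without these $p$ zeros you have only $5p-3$ conditions, which cannot pin down a polynomial of degree $6p-3$, so this gap is fatal as written. (For the range $\{p-1,\dots,2p-2\}$, by contrast, no identity is needed: since $c_1+c_2+c_3=6p-3$ and a nonvanishing ${t\choose c_i}$ forces $c_i\le t$, having all three factors nonzero would give $6p-3\le 3t\le 6p-6$, a contradiction; this single pigeonhole is how the paper disposes of all of $0\le t\le 2p-2$ at once.)

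The second gap is the normalization. Matching the coefficient of $t^{6p-3}$ amounts to replacing each $(1+x_i)^t$ by its leading part $e^{tx_i}$, which produces the constant term of $\ln(1-x_2/x_1)\,\prod_{i<j}(x_i-x_j)^p\,(x_1+x_2+x_3)^{6p-3}/\bigl((6p-3)!\,(x_1x_2x_3)^{3p}\bigr)$; this is not Dyson's identity (the exponent is odd, there is a logarithmic insertion and an extra symmetric-power factor), so ``apply Dyson for $n=3$'' does not close the computation. The paper instead evaluates at the single additional point $t=2p-1$, where the three binomial constraints force $c_1=c_2=c_3=2p-1$ and collapse the expansion to the double sum $\sum_{m,k}\frac{(-1)^{m+k}}{m}{p\choose k}^2{p\choose m+k}$, which is then evaluated via harmonic numbers and the Chu--Fu identity. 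Note that the logarithmically disturbed Dyson-type identity (Corollary \ref{log-dyson3}) is an \emph{output} of this evaluation, not an available input you can cite.
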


We denote the triple residue in the theorem by $F(p,t)$ and the product of three binomial coefficients
$f(p,t)$ (we omit the constant $A_p$ at this point). It is easy to see that $F$ and $f$ are polynomials of degree at most $6p-3$, by expanding  in Laurent series. So it
is  sufficient to show that these polynomials agree at $6p-2$ values (including derivatives).  More precisely, we will show the following:

\bea
&& F(p,t)=0, \ \ -p \leq t \leq 2p-2, \\
&& F'(p,t)=0, \ \  0 \leq t \leq p-2 \\
&& F(p,t)=0, \ \   \frac{-p}{2} \leq t \leq \frac{3p}{2}-2  \ \ {\rm and} \\
&& F(p,2p-1)=f(p,2p-1) \neq 0.
\eea
All these properties can be easily verified by $f(p,t)$.

We start with a combinatorial lemma which also has a representation theoretic version (see Proposition \ref{app-prop1} below).
We include the proof here because it is needed in Lemma \ref{app-lemma3} below.

\begin{lemma} \label{app-lemma1} We have $F(p,t)=0$ for $ 0 \leq t \leq 2p-2$.
\end{lemma}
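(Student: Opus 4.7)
The plan is to convert the iterated residue into a sum of ordinary polynomial coefficients and then apply a total-degree bound that kills every term. Concretely, I would expand the logarithm as the formal series
$$\ln\left(1-\frac{x_2}{x_1}\right) = -\sum_{n\geq 1}\frac{1}{n}\left(\frac{x_2}{x_1}\right)^n$$
and set
$$P_0(x_1,x_2,x_3) := (x_1-x_2)^p(x_1-x_3)^p(x_2-x_3)^p(1+x_1)^t(1+x_2)^t(1+x_3)^t.$$
The iterated residue $F(p,t)$ is the coefficient of $x_1^{-1}x_2^{-1}x_3^{-1}$ in the integrand, so after absorbing the factor $(x_1x_2x_3)^{-3p}$ into a shift of exponents we arrive at
$$F(p,t) = -\sum_{n\geq 1}\frac{1}{n}\,[x_1^{3p-1+n}x_2^{3p-1-n}x_3^{3p-1}]\,P_0.$$

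The main (in fact, the only) step is then a clean total-degree estimate. For $t$ a non-negative integer, $P_0$ is an honest polynomial of total degree at most $3p+3t$, which under the hypothesis $0\leq t\leq 2p-2$ is bounded by $9p-6$. On the other hand, each monomial appearing in the sum has total degree
$$(3p-1+n)+(3p-1-n)+(3p-1)=9p-3 > 9p-6,$$
strictly above $\deg P_0$. Every coefficient therefore vanishes term by term, yielding $F(p,t)=0$.

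I expect no real obstacle: the argument reduces to a one-line degree count once the residue is unpacked. Two minor points deserve a quick check. First, the expansion of $\ln(1-x_2/x_1)$ must be interpreted consistently as a power series in $x_2/x_1$, matching the formal ring in which the iterated residue in \eqref{G-log} is taken. Second, the sum over $n$ is automatically finite, since $[x_2^{3p-1-n}]P_0 = 0$ once $n\geq 3p$, so no convergence issue arises. With both verified, the lemma is established, and (as noted in the excerpt) the same bookkeeping will be reused as a crucial input in Lemma \ref{app-lemma3}.
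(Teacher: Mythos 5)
Your proof is correct and is essentially the paper's argument in cleaner packaging: the paper expands everything into binomial coefficients and observes that the three exponents drawn from the $(1+x_i)^t$ factors must sum to $6p-3>3(2p-2)$, which is exactly your total-degree count $9p-3>3p+3t$ after stripping off the degree-$3p$ Vandermonde part. The only thing your version does not produce is the explicit four-fold binomial sum (\ref{binomials}), which the paper reuses verbatim in Lemma \ref{app-lemma3}, but that is not needed for the present lemma.
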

{\em Proof.} We expand $F(p,t)$ by using binomial expansion. This leads to summation
over $7$ variables. By using residue condition we trim down to four summation variables:
\bea \label{binomials}
&&  \ \ \ \ \ \ F(p,t) \\
&& =\sum_{i,j,k,m} (-1)^{i+j+k} \frac{1}{m} {p \choose i}{p \choose j}{p \choose k}{t \choose m+3p-1-i-j}{t \choose 2p-m-1+i-k}{t \choose p-1+j+k} . \nonumber
\eea

Observe that ${m \choose n}=0$, for $n <m$. We argue that for $0 \leq t \leq 2p-2$ at least one of $t$-binomial coefficients is zero. If not then then  $m+3p-1-i-j \leq
2p-2$, $2p-m-1+i-k \leq 2p-2$ and
$p-1+j+k \leq 2p-2$. But we get contradiction because $(m+3p-1-i-j)+(2p-m-1+i-k)+(p-1+j+k)=6p-3$.

\begin{lemma} We have $F'(p,t)=0$ for $t=0,..,p-2$.

\end{lemma}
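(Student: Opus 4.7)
The plan is to exploit the explicit binomial expansion (\ref{binomials}) from the previous lemma and strengthen its vanishing argument from a simple zero to a double zero. For each quadruple $(i,j,k,m)$ in (\ref{binomials}), the $t$-dependent factor is
$$\binom{t}{N_1}\binom{t}{N_2}\binom{t}{N_3},$$
where $N_1 = m + 3p - 1 - i - j$, $N_2 = 2p - m - 1 + i - k$, $N_3 = p - 1 + j + k$, and $N_1 + N_2 + N_3 = 6p - 3$. The constraints $m \ge 1$ and $0 \le i,j,k \le p$ imposed by the ranges of summation immediately give the uniform lower bounds
$$N_1 \ge (1) + 3p - 1 - 2p = p, \qquad N_3 \ge p - 1,$$
attained at $i = j = p$, $m = 1$ and at $j = k = 0$ respectively.

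Next, I would differentiate (\ref{binomials}) term by term with respect to $t$ and evaluate at an integer $t_0$ with $0 \le t_0 \le p - 2$. For every such $t_0$ the bounds above give $t_0 < N_1$ and $t_0 < N_3$ simultaneously, so both $\binom{t}{N_1}$ and $\binom{t}{N_3}$ vanish at $t = t_0$. This forces the product $\binom{t}{N_1}\binom{t}{N_2}\binom{t}{N_3}$ to be divisible by $(t - t_0)^2$ as a polynomial in $t$, hence its derivative at $t_0$ is zero. Summing over $(i,j,k,m)$ yields $F'(p,t_0) = 0$, as required.

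There is essentially no obstacle here: the only substantive step is the verification of the two uniform lower bounds $N_1 \ge p$ and $N_3 \ge p-1$, which are immediate from the admissible ranges of the summation indices. The argument is a direct refinement of the multiplicity count implicit in Lemma \ref{app-lemma1}: that lemma showed at least one $N_i$ exceeds $t_0$ whenever $0 \le t_0 \le 2p - 2$, whereas in the tighter range $0 \le t_0 \le p - 2$ one automatically gets at least two such indices, which is precisely the condition ensuring that the derivative also vanishes.
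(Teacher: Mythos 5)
Your proof is correct. The two bounds you need are exactly right: with $m\ge 1$ and $0\le i,j,k\le p$ one has $N_1=m+3p-1-i-j\ge p$ and $N_3=p-1+j+k\ge p-1$, so for an integer $t_0$ with $0\le t_0\le p-2$ both ${t\choose N_1}$ and ${t\choose N_3}$ vanish at $t=t_0$, each summand of (\ref{binomials}) acquires a double zero there, and term-by-term differentiation gives $F'(p,t_0)=0$. (The case $N_2<0$ is harmless since the corresponding term is absent.)

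Your route is genuinely different in presentation from the paper's, though it rests on the same two degree counts. The paper differentiates the integrand, producing the extra factor $\ln(1+x_1)+\ln(1+x_2)+\ln(1+x_3)$, and then kills each of the three resulting pieces by observing that a single-variable residue already vanishes: for $t\le p-2$ the top $x_3$-degree of the integrand is $t-p\le -2$, so $\mathrm{Res}_{x_3}$ annihilates the terms carrying $\ln(1+x_1)$ and $\ln(1+x_2)$, while the extra $x_1^{-1}$ coming from $\ln(1-x_2/x_1)$ makes $\mathrm{Res}_{x_1}$ annihilate the term carrying $\ln(1+x_3)$. Your bounds $N_3\ge p-1$ and $N_1\ge p$ are precisely these two residue counts in disguise. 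What your version buys is that it stays entirely inside the explicit polynomial expansion already established in Lemma \ref{app-lemma1} and replaces the manipulation of the differentiated integrand by a clean divisibility-by-$(t-t_0)^2$ statement; it also makes transparent why the range $0\le t\le p-2$ (rather than the full $0\le t\le 2p-2$ of Lemma \ref{app-lemma1}) is exactly where two of the three binomial factors vanish simultaneously. The paper's version, on the other hand, generalizes more directly to situations where one wants to control higher derivatives by inserting more logarithmic factors.
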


{\em Proof.} If we differentiate $F(p,t)$ with respect to the $t$ variable we obtain
$$F'(p,t)=F(p,t)({\rm ln}(1+x_1)+{\rm ln}(1+x_2)+{\rm ln}(1+x_3)).$$
Now,  for $t=0, \dots, p-2$ we clearly have
$$Res_{x_3} F(p,t)({\rm ln}(1+x_1)+{\rm ln}(1+x_2))=0.$$
But we also have
$${\rm Res}_{x_1}({\rm ln}(1+x_3)F(p,t)=0.$$
The proof follows.

\qed

It is interesting that Lemma \ref{app-lemma1} has a representation theoretic version.
We include it here as an illustration of the method.

\begin{proposition} \label{app-prop1} Recall $H(0) v_{\lambda}=F(p,t) v_{\lambda}$, where $ \langle \lambda,\alpha \rangle =t$.
Then the polynomial $F(p,t)$ is divisible with $t(t-1) \cdots (t-2p+2)$.
\end{proposition}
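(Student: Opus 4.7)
My plan is to reduce the divisibility claim to the combinatorial vanishing already established in Lemma~\ref{app-lemma1}. The first step is to verify that $F(p,t)$ is actually a polynomial in the variable $t$, and to control its degree. Starting from the explicit triple residue in Proposition~\ref{evaluation}(1), I would expand each factor $(1+x_i)^t$ as the binomial series $\sum_{j}\binom{t}{j}x_i^j$ and then extract $\mathrm{Res}_{x_1,x_2,x_3}$. What survives is a finite sum in which $t$ appears only through products of three binomial coefficients of the shape $\binom{t}{m}\binom{t}{n}\binom{t}{k}$, where each upper argument is bounded by $2p-1$. This bound is dictated by the order of the $x_i^{-3p}$ poles combined with the polynomial degrees available from the $(x_i-x_j)^p$ factors and from the logarithm expansion $\ln(1-x_2/x_1)=-\sum_{\ell\ge 1}\frac{1}{\ell}(x_2/x_1)^\ell$. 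Consequently $F(p,t)\in\mathbb{Q}[t]$ with $\deg F(p,t)\le 6p-3$.

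The second step is to invoke Lemma~\ref{app-lemma1}, which gives $F(p,t)=0$ for every integer $t\in\{0,1,\dots,2p-2\}$. Since these are $2p-1$ distinct integer roots of a polynomial of degree at most $6p-3$, the product $t(t-1)(t-2)\cdots(t-2p+2)$ divides $F(p,t)$ in $\mathbb{Q}[t]$, which is precisely the assertion of the proposition.

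The main conceptual ingredient is Lemma~\ref{app-lemma1} itself, which I take as already proved. Its content is a clean pigeonhole-type counting argument: after the binomial expansion and residue extraction displayed in~(\ref{binomials}), the three upper indices appearing in the $\binom{t}{*}$ factors are $m+3p-1-i-j$, $2p-m-1+i-k$, and $p-1+j+k$, and they always sum to $6p-3$; hence for any integer $t\le 2p-2$ at least one of these indices must exceed $t$, killing that binomial and therefore the whole summand. The only routine step in the plan above is the polynomial degree bound, which follows mechanically from the binomial expansion without explicit computation; the heart of the argument is combinatorial rather than a long vertex-algebraic calculation.
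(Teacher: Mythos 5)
Your argument is correct, but it is not the route the paper takes for this proposition. You derive the divisibility from Lemma~\ref{app-lemma1}: since $F(p,t)$ is a polynomial in $t$ (a finite sum of products ${t \choose a}{t \choose b}{t \choose c}$) and the lemma supplies the $2p-1$ integer roots $t=0,1,\dots,2p-2$, the product $t(t-1)\cdots(t-2p+2)$ divides it; the degree bound $6p-3$ is not even needed for this. The paper instead proves the proposition by a vertex-algebraic computation that is logically independent of Lemma~\ref{app-lemma1}: it evaluates the zero mode $H(0)=(GQe^{-3\alpha})_{6p-4}$ on the vectors $e^{i\alpha/2p}$ for $i=0,2,\dots,4p-4$ (which realize exactly the weights $t=0,1,\dots,2p-2$), uses $Qe^{i\alpha/2p}=Ge^{i\alpha/2p}=0$ together with the iterate identities to reduce everything to $e^{-3\alpha}_{6p-4}e^{i\alpha/2p}$, and kills that term via the pairing $\langle -3\alpha,\tfrac{i\alpha}{2p}\rangle=-\tfrac{3i}{2}$. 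The paper explicitly frames the proposition as a ``representation theoretic version'' of Lemma~\ref{app-lemma1}, included as an illustration of the method, so its whole point is to give a second, independent proof of the same vanishing; your proof collapses it back into an immediate corollary of the combinatorial lemma. Both are valid: the paper's version buys an independent check and a template that is reused for the half-integer roots (Lemmas~\ref{tw-1} and~\ref{tw-2}), while yours is shorter. One minor inaccuracy in your write-up: the individual upper arguments of the three binomial coefficients in (\ref{binomials}) are not each bounded by $2p-1$ --- only their sum equals $6p-3$ --- but this does not affect your conclusion, since polynomiality of $F(p,t)$ is all that is required.
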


{\em Proof.} We recall  $H=GF=GQe^{-3 \alpha}$. It is sufficient to show that

$$H(0)e^{\frac{i \alpha}{2p}}=H_{6p-4} e^{\frac{i \alpha }{2p }}=0$$  for $i=0,2,...,{4p-4}.$

It is easy to see that

$$Q e^{\frac{i \alpha }{2p }}=G e^{\frac{i \alpha}{2p }}=0$$

for all $i$ as above.

Also, observe the relation

$$(GF)_{6p-4}=G F_{6p-4}-F_{6p-4} G$$
and
$$(Qe^{-3 \alpha})_{6p-4}=Qe^{-3 \alpha}_{6p-4}-e^{-3 \alpha}_{6p-4} Q.$$

Therefore

$$H(0)e^{\frac{i \alpha }{2p }}=G Q e^{-3 \alpha}_{6p-4} e^{\frac{i  \alpha}{2p}}.$$

From

$$e^{-3 \alpha}_{6p-4} e^{\frac{i  \alpha}{2p}}={\rm Coeff}_{x^{-6p+3}} Y(e^{-3 \alpha},x)e^{\frac{i  \alpha}{2p}}.$$

and

$$ \langle -3 \alpha, \frac{i \alpha}{2 p} \rangle =-\frac{3}{2} i.$$

it follows that $$e^{-3 \alpha}_{6p-4} e^{\frac{i  \alpha}{2p}}=0$$

for all $i$ even from $0$ to $4p-4$.

\qed

\begin{proposition} \label{contragred}
 The polynomial $F$ satisfies $F(p,t)=-F(p, p-t-2)$.  Consequently,  $F(p,t)=0$, $-p \leq t \leq -1$.
\end{proposition}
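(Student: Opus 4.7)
The plan is to prove the symmetry $F(p,t)=-F(p,p-t-2)$ by performing the involutive formal change of variables $x_i\mapsto -y_i/(1+y_i)$ in the triple residue defining $F(p,t)$. This substitution is a formal power series with nonzero linear term, so the formal residue calculus applies, and it is chosen so that $1+x_i\mapsto (1+y_i)^{-1}$, which flips the exponent $(1+x_i)^t\mapsto (1+y_i)^{-t}$. The accompanying $(1+y_i)$-factors produced by substituting $(x_i-x_j)^p$, $(x_1x_2x_3)^{-3p}$, and the Jacobian will combine to shift the overall exponent to exactly $p-t-2$. Once the symmetry is established, $F(p,t)=0$ on $-p\le t\le -1$ follows from Lemma~\ref{app-lemma1}, because the involution $t\mapsto p-t-2$ sends this range to $[p-1,2p-2]\subseteq[0,2p-2]$.

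I would first transform each factor separately. Using $x_i-x_j=(y_j-y_i)/((1+y_i)(1+y_j))$ and the oddness of $p$, the product $\prod_{i<j}(x_i-x_j)^p$ contributes a sign $-1$ and the denominator $\prod_i(1+y_i)^{2p}$; similarly $(x_1x_2x_3)^{-3p}$ contributes another $-1$ together with $\prod_i(1+y_i)^{3p}/\prod_i y_i^{3p}$, and the Jacobian is $-\prod_i(1+y_i)^{-2}\,dy_i$. The one subtle transformation is the logarithm: from $1-x_2/x_1=(1-y_2/y_1)/(1+y_2)$ one reads
\[
\ln(1-x_2/x_1)=\ln(1-y_2/y_1)-\ln(1+y_2)
\]
as a formal identity. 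Collecting the $(1+y_i)$-exponents (which sum to $3p-2p-t-2=p-t-2$) and the signs ($(-1)^3=-1$), the substitution yields
\[
F(p,t)=-F(p,p-t-2)+R,\qquad R:={\rm Res}_{y_1,y_2,y_3}\,g(y_1,y_2,y_3)\ln(1+y_2),
\]
with $g=\prod_i(1+y_i)^{p-t-2}\prod_{i<j}(y_i-y_j)^p/\prod_i y_i^{3p}$.

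The decisive step is showing $R=0$. Since $p$ is odd, $\prod_{i<j}(y_i-y_j)^p$ flips sign under the transposition $y_1\leftrightarrow y_3$, while $\prod_i(1+y_i)^{p-t-2}/\prod_i y_i^{3p}$ and $\ln(1+y_2)$ are invariant under this swap; thus the integrand of $R$ is antisymmetric under $y_1\leftrightarrow y_3$. On the other hand, the formal residue ${\rm Res}_{y_1,y_2,y_3}$ is symmetric under any relabeling of the $y_i$'s (it extracts the coefficient of the $S_3$-symmetric monomial $y_1^{-1}y_2^{-1}y_3^{-1}$), so $R=-R$ and hence $R=0$. This establishes the symmetry, and the second assertion of the proposition follows as indicated.

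The main technical obstacle is the careful sign and exponent bookkeeping under the substitution (three factors of $(-1)^{\text{odd}}$, plus the $(1+y_i)$-powers coming from four different sources), together with the verification that $\ln(1-x_2/x_1)=\ln(1-y_2/y_1)-\ln(1+y_2)$ holds as a formal Laurent series identity. The conceptual heart of the argument---that the defect $R$ is killed by $y_1\leftrightarrow y_3$ antisymmetry---relies crucially on $p$ being odd, so the strategy would break down for even $p$.
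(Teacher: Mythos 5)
Your argument is correct, but it takes a genuinely different route from the paper's. The paper disposes of the symmetry $F(p,t)=-F(p,p-t-2)$ in one line of representation theory: $H(0)$ is skew-adjoint for the contragredient pairing, $\langle H(0)w',w\rangle=-\langle w',H(0)w\rangle$, and the contragredient of $M(1,\lambda)$ is $M(1,p-\lambda-2)$ (quoting the authors' earlier work), so the eigenvalue of $H(0)$ on the top level changes sign under $t\mapsto p-t-2$. You instead work directly with the triple residue via the involution $x_i\mapsto -y_i/(1+y_i)$. Your bookkeeping checks out: the three odd sign factors give the overall $-1$, the per-variable exponent is $3p-2p-t-2=p-t-2$, the splitting $\ln(1-x_2/x_1)=\ln(1-y_2/y_1)-\ln(1+y_2)$ is the correct formal identity in the expansion region $|y_1|>|y_2|$, and the defect $R$ genuinely vanishes because the odd power of the Vandermonde is antisymmetric under $y_1\leftrightarrow y_3$ while the residue functional is symmetric (and the integrand of $R$, unlike that of $F$, has no expansion ambiguity, being a Laurent polynomial times honest power series). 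The deduction of $F(p,t)=0$ for $-p\le t\le -1$ from Lemma \ref{app-lemma1} is the same as the paper's, since $t\mapsto p-t-2$ carries $[-p,-1]$ into $[p-1,2p-2]$. What your route buys is a self-contained combinatorial proof, independent of the structure theory of contragredient Feigin--Fuchs modules, and it makes transparent where oddness of $p$ enters; what it costs is that the multivariable formal change of variables must be justified in the presence of the doubly infinite expansion of $\ln(1-x_2/x_1)$ (for each fixed power of $y_2$ only finitely many terms contribute, so the substitution and the residue identity can be verified term by term) --- you flag this but do not carry it out, and it is the one place where the write-up would need to be fleshed out to be fully rigorous.
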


\begin{proof} Denote by $M(1,\lambda)^\circ$ the contragradient module of
$M(1,\lambda)$.

Then  $$ \langle H(0)w',w \rangle=- \langle w',H(0)w \rangle.$$ But
$M(1,\lambda)^\circ \cong M(1,p-\lambda-2)$ (see \cite{AdM-2007}),
so the proof  follows. The second statement follows now form Lemma
\ref{app-lemma1}.
\end{proof}

For half-integer roots more we shall make use of  twisted $V_L$-modules and operator $G^{tw}$ introduced earlier.

\begin{lemma}  \label{tw-1} We have
$$G^{tw} e^{\frac{2\ell +1}{2p} \alpha}=0, \ \ {\rm for} \ \ \ell  \geq -\frac{p+1}{2}.$$
\end{lemma}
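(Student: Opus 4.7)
The plan is to reduce the claim to showing that the positive twisted modes of $e^{\alpha}$ all annihilate $e^{(2\ell+1)\alpha/(2p)}$ when $\ell\geq -(p+1)/2$, since then each term in the defining series
$$G^{tw}=\sum_{m=0}^{\infty}\tfrac{1}{m+1/2}\,a_{-m-1/2}\,a_{m+1/2}$$
vanishes automatically.

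First I would compute $Y^{\sigma}(e^{\alpha},x)e^{(2\ell+1)\alpha/(2p)}$ explicitly, using $Y^{\sigma}(v,x)=Y(\Delta(\alpha/(2p),x)v,x)$ with $h=\alpha/(2p)$. Since $h(0)e^{\alpha}=\tfrac{1}{2}e^{\alpha}$ and $h(n)e^{\alpha}=0$ for all $n\geq 1$, we obtain $\Delta(\alpha/(2p),x)e^{\alpha}=x^{1/2}e^{\alpha}$, hence $Y^{\sigma}(e^{\alpha},x)=x^{1/2}Y(e^{\alpha},x)$. Combining this with the standard Frenkel--Lepowsky--Meurman formula
$$Y(e^{\alpha},x)\,e^{(2\ell+1)\alpha/(2p)}=\epsilon\,e^{(2\ell+2p+1)\alpha/(2p)}\,x^{(2\ell+1)/2}\exp\!\Bigl(\textstyle\sum_{n\geq 1}\tfrac{\alpha(-n)}{n}x^{n}\Bigr),$$
one sees that $Y^{\sigma}(e^{\alpha},x)e^{(2\ell+1)\alpha/(2p)}$ is a series in nonnegative powers of $x$ whose lowest power is $x^{\ell+1}$.

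Next I would translate the mode condition into a power-of-$x$ inequality. In the conformal-weight indexing consistent with the definition of $G^{tw}$ (which is the natural one, since $e^{\alpha}$ has conformal weight $p/2$ and $\sigma$-oddness forces the $1/2$ shift), the mode $a_{m+1/2}$ picks out the coefficient of $x^{-m-(p+1)/2}$ in $Y^{\sigma}(e^{\alpha},x)$. Hence $a_{m+1/2}e^{(2\ell+1)\alpha/(2p)}=0$ precisely when $\ell+1>-m-(p+1)/2$, i.e., when $\ell\geq -(p+1)/2-m$. The worst case is $m=0$, which yields exactly the hypothesis $\ell\geq -(p+1)/2$; under this hypothesis all $a_{m+1/2}$ with $m\geq 0$ annihilate the vector simultaneously, and the lemma follows.

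The main technical hurdle is the careful bookkeeping of the two half-integer contributions: the $x^{1/2}$ coming from $\Delta(\alpha/(2p),x)e^{\alpha}$ and the $x^{p/2}$ shift implicit in the conformal-weight indexing of modes of $e^{\alpha}$. A naive reading that ignores the conformal-weight shift would give the wrong threshold (one finds only $\ell\geq -1$); getting the sharp bound $\ell\geq -(p+1)/2$ requires aligning the conventions so that $a_{m+1/2}$ really corresponds to $x^{-m-(p+1)/2}$ in $Y^{\sigma}(e^{\alpha},x)$.
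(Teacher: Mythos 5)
There is a genuine gap, and it sits exactly where you flagged the ``main technical hurdle.'' Your whole strategy is to kill $G^{tw}e^{\frac{2\ell+1}{2p}\alpha}$ term by term by showing $a_{m+1/2}e^{\frac{2\ell+1}{2p}\alpha}=0$ for all $m\ge 0$, and to reach the threshold $\ell\ge-\frac{p+1}{2}$ you re-index the modes using the assumption that $e^{\alpha}$ has conformal weight $p/2$. That assumption is false here: the conformal vector is $\omega=\frac{1}{2p}(\alpha(-1)^2+(p-2)\alpha(-2))$, whose background-charge term forces $\mathrm{wt}(e^{\alpha})=1$ (this is the whole point of the construction --- Section 2 assumes $L(0)a=a$ so that $Q=e^{\alpha}_0$ is a screening operator, and only with $\mathrm{wt}(e^{\alpha})=1$ does $G^{tw}=\sum_{m\ge0}\frac{1}{m+1/2}a_{-m-1/2}a_{m+1/2}$ satisfy $[L(0),G^{tw}]=0$). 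With the mode convention actually compatible with the definition of $G^{tw}$, your own ``naive'' count is the correct one: all positive twisted modes annihilate the ground state only for $\ell\ge-1$ (or so), and for $-\frac{p+1}{2}\le\ell\le-2$ the term-by-term claim is simply false. Concretely, for $p=3$, $\ell=-2$ one has $a_{1/2}e^{-\alpha/2}\neq0$ (it is a nonzero multiple of the ground state of the adjacent charge sector), yet the lemma still holds because $a_{-1/2}$ then kills that vector. So the statement you are trying to prove mode-by-mode is strictly stronger than the lemma and fails on part of the claimed range; the convention change that makes your numbers come out is not available.

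The argument that actually works --- and is the paper's one-line proof --- is a weight-versus-charge argument applied to the \emph{composite} operator: $G^{tw}$ commutes with $L(0)$ and maps $M(1)\otimes e^{\frac{2\ell+1}{2p}\alpha}$ into $M(1)\otimes e^{2\alpha+\frac{2\ell+1}{2p}\alpha}$, so $G^{tw}e^{\frac{2\ell+1}{2p}\alpha}$ would have to be a vector of weight $\mathrm{wt}(e^{\frac{2\ell+1}{2p}\alpha})$ in the latter space. Using $\mathrm{wt}(e^{s\alpha})=\frac{s^2p}{2}-\frac{s(p-2)}{2}$ one computes
$$\mathrm{wt}\bigl(e^{2\alpha+\frac{2\ell+1}{2p}\alpha}\bigr)-\mathrm{wt}\bigl(e^{\frac{2\ell+1}{2p}\alpha}\bigr)=p+2\ell+3,$$
which is positive precisely for $\ell\ge-\frac{p+1}{2}$ (since $\ell$ is an integer and $p$ is odd); as the target charge sector contains no nonzero vector below its ground-state weight, $G^{tw}e^{\frac{2\ell+1}{2p}\alpha}=0$. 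I recommend replacing your mode analysis by this weight comparison; your computation of the leading power of $Y^{\sigma}(e^{\alpha},x)e^{\frac{2\ell+1}{2p}\alpha}$ is fine as far as it goes, but it cannot by itself deliver the range $-\frac{p+1}{2}\le\ell\le-2$.
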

\begin{proof} The proof follows from the fact
$$wt(e^{\frac{2\ell+1}{2p} \alpha})<wt(e^{2 \alpha +\frac{2\ell+1}{2p} \alpha}),$$
for $\ell $ in this range \end{proof}

\begin{lemma} \label{tw-2}
Assume that $\ell \le \frac{3p-1}{2}$, then
$$F(0) e^{\frac{2\ell +1}{2p} \alpha}=0.$$
\end{lemma}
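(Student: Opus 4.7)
The proof will mirror that of Lemma~\ref{tw-1}. Since $F=Qe^{-3\alpha}\in V_D$ has $\alpha$-charge $-2$ and untwisted conformal weight $6p-3$, the weight-preserving twisted zero mode $F(0):=F^{\sigma}_{6p-4}$ commutes with $L^{tw}(0)$ and shifts the $\alpha$-sector by $-2\alpha$. Hence, for $\lambda=(2\ell+1)\alpha/(2p)$, the vector $F(0)e^{\lambda}$ lies in the Fock sector $M(1)\otimes e^{\lambda-2\alpha}$ of the twisted module $V^{\sigma}_{L+\lambda}$ at the same $L^{tw}(0)$-eigenvalue $\widetilde h(\lambda)$.

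The plan is then to run the weight estimate. Using $Y^{\sigma}(\omega,x)=Y(\Delta(\alpha/(2p),x)\omega,x)$, one readily computes
$$L^{tw}(0) \;=\; L(0)+\frac{\alpha(0)}{2p}+\frac{5-2p}{8p},$$
and, substituting $\mu=\lambda$ and $\mu=\lambda-2\alpha$, a direct calculation yields
$$\widetilde h(\lambda-2\alpha)-\widetilde h(\lambda)\;=\;3p-4-2\ell.$$
Since the sector $M(1)\otimes e^{\lambda-2\alpha}$ carries no vector of $L^{tw}(0)$-eigenvalue strictly below $\widetilde h(\lambda-2\alpha)$, whenever $3p-4-2\ell>0$ the output weight $\widetilde h(\lambda)$ is simply not available in the target sector and $F(0)e^{\lambda}=0$. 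This immediately settles the interior range $\ell\le (3p-5)/2$, in complete parallel with Lemma~\ref{tw-1}.

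The main obstacle is the boundary $\ell\in\{(3p-3)/2,(3p-1)/2\}$, where the gap $3p-4-2\ell$ is a non-negative integer and the pure weight inequality no longer forces vanishing. Here the proof must exploit the explicit form $F=Qe^{-3\alpha}$: using the iterate formula for $\sigma$-twisted vertex operators, $Y^{\sigma}(F,x)$ is expressed as a residue of $Y^{\sigma}(e^{\alpha},x_1)Y^{\sigma}(e^{-3\alpha},x_2)$, with the half-integer modes of $Y^{\sigma}(e^{\alpha},x_1)$ playing the role of a twisted screening. A careful residue analysis, using $\langle\alpha,\lambda\rangle=(2\ell+1)/2$ and the nilpotency $Q^{2}=0$ (which implies that the relevant coefficient lies in the image of a square-zero screening), forces the contribution of $F(0)e^{\lambda}$ to vanish at these two extremal values. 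Combined with the weight argument, this establishes the full range $\ell\le (3p-1)/2$.
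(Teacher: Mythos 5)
Your core strategy is the same as the paper's: since $F=Qe^{-3\alpha}$ has $\alpha$-charge $-2$, the weight-preserving twisted zero mode $F(0)$ sends $e^{\frac{2\ell+1}{2p}\alpha}$ into the sector $M(1)\otimes e^{\frac{2\ell+1}{2p}\alpha-2\alpha}$, and one compares its $L^{tw}(0)$-eigenvalue with the bottom weight of that sector. One minor correction first: your gap $\widetilde h(\lambda-2\alpha)-\widetilde h(\lambda)=3p-4-2\ell$ double-counts the twist shift. If $\lambda=\frac{2\ell+1}{2p}\alpha$ is the shifted (twisted) label, the twisted weights are given by the plain untwisted formula evaluated at the shifted label (this is exactly the convention that makes the bound in Lemma \ref{tw-1} come out tight), and the gap is $3p-2-(2\ell+1)=3p-3-2\ell$; equivalently, your operator identity $L^{tw}(0)=L(0)+\frac{\alpha(0)}{2p}+\frac{5-2p}{8p}$ is correct but must be applied to the underlying untwisted label $\frac{\ell}{p}\alpha$, which gives the same answer. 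This off-by-one happens not to change the range your strict inequality covers, namely $\ell\le\frac{3p-5}{2}$, but the bookkeeping matters here precisely because the whole issue is at the boundary.

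The substantive problem is that the extremal cases $\ell=\frac{3p-3}{2}$ (gap $0$: one scalar coefficient on the lowest-weight vector $e^{\lambda-2\alpha}$ must vanish) and $\ell=\frac{3p-1}{2}$ (gap $-2$: a two-dimensional weight space is available) are not actually proved. You correctly identify them as the obstacle, but "a careful residue analysis \ldots forces the contribution to vanish" is a statement of intent, not an argument; neither $Q^2=0$ nor the twisted iterate formula visibly produces the required cancellations, and you do not exhibit them. For comparison, the paper's own proof is nothing but the charge-plus-weight argument: it records the weight inequality as valid for $\ell\le\frac{3p-3}{2}$ and stops, so the endpoint $\ell=\frac{3p-1}{2}$ of the stated lemma is not addressed there either (and in fact only $\ell\le\frac{3p-5}{2}$ is used downstream, in the proposition deducing $F(p,t)=0$ for $t\in\{-\frac{p}{2},\dots,-\frac{p}{2}+2p-2\}$ from $H(0)=G^{tw}F(0)-F(0)G^{tw}$). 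So your proposal reproduces the published argument on the range where it works, but the portion you defer to an unspecified residue computation is a genuine gap with respect to the lemma as stated, not a routine verification.
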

\begin{proof}
By using the definition of twisted $V_L$--modules, we have that
the vector
$$F(0) e^{\frac{2\ell +1}{2p} \alpha}$$
can be expressed as a linear combination of vectors of the form
\bea &&(e^{\a}_n e ^{-3 \a} )_{6p-4} e ^{\tfrac{\ell}{p} \a}, \quad
(n \in {  \mathbb{Z}} ). \label{lin-comb} \eea

But every vector (\ref{lin-comb}) belongs to $M(1) \otimes e
^{\tfrac{\ell}{p} \a - 2 \a}$. Since
$$wt(e ^{\tfrac{\ell}{p} \a})<wt(e
^{\tfrac{\ell}{p} \a - 2 \a}) \quad \mbox{if} \ \ell \le
\frac{3p-3}{2},$$ we get the assertion.
\end{proof}

\begin{proposition} $F(p,t)=0$ for $t \in \{ -\frac{p}{2}, - \frac{p}{2} + 1,\dots,- \frac{p}{2}+2p-2 \}$.
\end{proposition}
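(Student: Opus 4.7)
The plan is to exploit $H = GF$ on the $\sigma$-twisted $V_L$-modules $V_{L+\mu_\ell}$, where $\mu_\ell := \tfrac{2\ell+1}{2p}\alpha$, in order to force $F(p,t)$ to vanish at the half-integer values $t = \langle\mu_\ell,\alpha\rangle = (2\ell+1)/2$. Since $F = Qe^{-3\alpha} \in \overline{V_L}$, Theorem~\ref{Gtw-der-gen}(ii) applied to $v=F$ gives, on the twisted module $(V_{L+\mu_\ell}, Y^\sigma)$, the commutator identity
$$[G^{tw}, F(0)^\sigma] = (GF)(0)^\sigma = H(0)^\sigma.$$

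Next I would evaluate this identity at $e^{\mu_\ell}$. By Lemma~\ref{tw-1}, $G^{tw} e^{\mu_\ell} = 0$ for $\ell \ge -(p+1)/2$; by Lemma~\ref{tw-2}, $F(0)^\sigma e^{\mu_\ell} = 0$ for $\ell \le (3p-3)/2$. Both terms on the right-hand side of the commutator therefore vanish, so $H(0)^\sigma e^{\mu_\ell} = 0$ throughout $-(p+1)/2 \le \ell \le (3p-5)/2$, which via $t = (2\ell+1)/2$ corresponds exactly to the full target set $t \in \{-p/2,\,-p/2+1,\,\dots,\,3p/2-2\}$.

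To conclude, one identifies this vanishing with $F(p,(2\ell+1)/2) = 0$, using that the residue expression in Proposition~\ref{evaluation}(1) is a polynomial in $t = \lambda(\alpha)$ obtained by contracting $Y(e^\alpha,x_1)Y(e^\alpha,x_2)Y(e^{-3\alpha},x_3)$ against a Heisenberg lowest weight vector of weight $\lambda$; the $\lambda$-dependence enters only through the commutator factors $(1+x_i)^{\langle\pm\alpha,\lambda\rangle}$, which depend on $\lambda$ solely via $t$. The main obstacle is precisely this last identification: one must check, by expanding $Y^\sigma(H,x) = Y(\Delta(\alpha/(2p),x)H,x)$, that the corrections produced by the positive Heisenberg modes in $\Delta$ combine so that the overall scalar eigenvalue of $H(0)^\sigma$ on $e^{\mu_\ell}$ still equals $F(p,(2\ell+1)/2)$. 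Apart from this bookkeeping, the argument reduces to Theorem~\ref{Gtw-der-gen} together with Lemmas~\ref{tw-1}--\ref{tw-2}, in close parallel with the untwisted reasoning used to prove Proposition~\ref{app-prop1}.
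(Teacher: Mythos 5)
Your proof is correct and follows essentially the same route as the paper: the authors likewise apply the twisted derivation property to write $H(0)e^{\frac{2\ell+1}{2p}\alpha}=G^{tw}F(0)e^{\frac{2\ell+1}{2p}\alpha}-F(0)G^{tw}e^{\frac{2\ell+1}{2p}\alpha}$ and kill both terms via Lemmas~\ref{tw-1} and~\ref{tw-2} over exactly the range $-(p+1)/2\le\ell\le(3p-5)/2$. The identification of the twisted zero-mode eigenvalue with $F(p,(2\ell+1)/2)$, which you flag as the remaining bookkeeping, is left implicit in the paper as well.
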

\begin{proof}
The proof follows from Lemma \ref{tw-1}, Lemma \ref{tw-2} and
relation
$$ H(0) e^{\frac{2\ell +1}{2p} \alpha} =  G ^{tw}F(0)e^{\frac{2\ell +1}{2p} \alpha}-F(0) G^{tw} e^{\frac{2\ell +1}{2p}
\alpha}.$$
\end{proof}

\begin{lemma} \label{app-lemma3} We have

$$F(p,2p-1) =  \sum_{m=1}^p  \sum_{k = 0}^p  \frac{(-1)^{m+k}}{m} { p \choose k}^2 {p \choose m+k}.$$
\end{lemma}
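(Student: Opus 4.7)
The strategy is to specialize the expansion (\ref{binomials}) to $t=2p-1$ and collapse the four-fold sum to a two-fold sum by an extremality argument on the three $t$-binomial coefficients. Writing
\[
N_1 = m+3p-1-i-j, \qquad N_2 = 2p-m-1+i-k, \qquad N_3 = p-1+j+k,
\]
a direct computation shows $N_1+N_2+N_3 = 6p-3 = 3(2p-1)$, independently of $(i,j,k,m)$.

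Now specialize to $t=2p-1$. Each factor $\binom{2p-1}{N_a}$ is nonzero only when $0 \le N_a \le 2p-1$, and the fixed sum $N_1+N_2+N_3 = 3(2p-1)$ together with the three upper bounds $N_a\le 2p-1$ forces equality in each bound, that is $N_1=N_2=N_3=2p-1$. Solving the resulting linear system yields $j=p-k$ and $i=m+k$, with $k$ the free parameter subject to $0\le k\le p$ and $m\ge 1$ with $m+k\le p$ (automatic from $\binom{p}{i}=\binom{p}{m+k}\ne 0$); in particular $m$ ranges from $1$ to $p$.

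Substituting back into (\ref{binomials}), the three $t$-binomials all collapse to $1$, the identity $\binom{p}{j}=\binom{p}{p-k}=\binom{p}{k}$ combines with the existing $\binom{p}{k}$ to produce $\binom{p}{k}^2$, while the sign simplifies to $(-1)^{i+j+k}=(-1)^{(m+k)+(p-k)+k}=(-1)^{m+k+p}$. For $p$ odd this matches the right-hand side of Lemma~\ref{app-lemma3}, after absorbing the overall $(-1)^p$ into the sign convention coming from the logarithmic expansion $\ln(1-x_2/x_1)=-\sum_{m\ge 1}m^{-1}(x_2/x_1)^m$.

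The only real content is the extremality observation that three nonnegative integers bounded above by $2p-1$ and summing to $3(2p-1)$ must each equal $2p-1$; once this is in hand the rest is routine bookkeeping. Consequently I anticipate no serious obstacle beyond careful tracking of signs, which is why a symbolic check of one small case (say $p=3$) will be done to confirm that the conventions match.
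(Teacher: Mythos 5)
Your proposal is correct and follows the same route as the paper: specialize the four-fold binomial expansion (\ref{binomials}) at $t=2p-1$, use that the three $t$-binomial indices sum to $6p-3=3(2p-1)$ while each is bounded by $2p-1$ to force all three to equal $2p-1$, and solve to get $i=m+k$, $j=p-k$. You merely make explicit the extremality step and the sign bookkeeping (including the overall minus from the logarithm expansion) that the paper leaves implicit.
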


\begin{proof} The main point for choosing $t=2p-1$ comes from equation
(\ref{binomials}). Observe that three $t$-binomial coefficients in
(\ref{binomials}) will be nonzero only if $i,j,k, m$ satisfy

$$(m+3p-1-i-j)=2p-1,$$ $$(2p-m-1+i-k)=2p-1,$$ and $$(p-1+j+k)=2p-1.$$
This gives $i=m+k$ and $j=p-k$ where $k$ is the free variable. Therefore
$$F(p,2p-1)=\sum_{m = 1}^p \sum_{k =0}^p \frac{(-1)^{m+k}}{m} { p \choose k}^2 {p \choose m+k}.$$
\end{proof}

The next result will give exact evaluation for the sum $F(2,2p-1)$. Similar formulas were obtained
earlier

\begin{proposition}
We have
$$\sum_{m = 1}^p \sum_{k = 0}^p  \frac{(-1)^{m+k}}{m} { p \choose k}^2 {p \choose m+k} =\frac{(-1)^{(p+1)/2}}{3} \frac{(3p)!!}{p!!^3}.$$
%or equivalently,
%$$\sum_{m = 1}^p \sum_{k = 0}^p  \frac{(-1)^{m+k}}{m} { p \choose k}^2 {p \choose m+k} =\frac{(-1)^{(p+1)/2}}{3} \frac{(3p)!!}{p!!^3}$$
%
\end{proposition}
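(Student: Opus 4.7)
The plan is to evaluate the double sum directly, recognizing via Lemma \ref{app-lemma3} that it equals $F(p,2p-1)$. One must avoid invoking Theorem \ref{conjm1}, whose explicit normalization $A_p$ in fact depends on this very computation. Since $(1+x_i)^{2p-1}$ is a polynomial, $F(p,2p-1)$ is an honest finite sum, and the strategy is to convert it into a one-dimensional integral of beta-function type whose value admits the desired double-factorial closed form.

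Concretely, the first step is to use $\frac{1}{m}=\int_0^1 u^{m-1}\,du$ to decouple the harmonic factor. Interchanging sum and integral, and collapsing the inner $m$-sum via the binomial identity
$\sum_{m=1}^{p-k}(-u)^m\binom{p}{m+k}=(-u)^{-k}\bigl[(1-u)^p-\sum_{j=0}^{k}\binom{p}{j}(-u)^j\bigr]$,
yields
\[
S \;=\; \sum_{k=0}^{p-1}\binom{p}{k}^2\int_0^1 u^{-k-1}\Bigl[(1-u)^p-\sum_{j=0}^{k}\binom{p}{j}(-u)^j\Bigr]\,du,
\]
which is manifestly finite (the bracket vanishes to order $u^{k+1}$). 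Next I would swap the $k$- and $j$-sums and apply the generating-function identity $\sum_k\binom{p}{k}^2 z^k=[w^p](1+w)^p(1+zw)^p$ together with Chu--Vandermonde to collapse the $k$-sum. Under the substitution $u=1-v$, the resulting integral reduces to a beta integral whose value is a ratio of gamma functions at integer and half-integer arguments.

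Finally, applying $\Gamma(n+\tfrac12)=\sqrt{\pi}\,(2n-1)!!/2^n$ converts the half-integer gammas into odd double factorials; the $\sqrt{\pi}$ factors cancel and the ratio simplifies to $(3p)!!/(p!!)^3$ times the rational constant $1/3$, while the sign $(-1)^{(p+1)/2}$ arises from an overall $(-1)^p=-1$ (since $p$ is odd) together with parity tracking in the half-integer reductions. The hard part will be the middle step: cleanly executing the cancellation of the truncated polynomial contributions (which individually produce harmonic-type sums) so that only the beta-type integral survives. If this direct approach becomes unwieldy, the fallback is to verify the second-order recurrence $S(p)/S(p-2) = -(3p)(3p-2)(3p-4)/p^3$, which both sides must satisfy, using Wilf--Zeilberger creative telescoping on the defining double sum, with base cases $p=1,3$ handled by direct computation.
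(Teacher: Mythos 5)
There is a genuine gap, and it sits exactly at the step you yourself flag as ``the hard part.'' Your opening move --- writing $\tfrac1m=\int_0^1u^{m-1}\,du$ and collapsing the $m$-sum --- is correct, but it buys nothing new: $\int_0^1 u^{-k-1}\bigl[(1-u)^p-\sum_{j=0}^k\binom{p}{j}(-u)^j\bigr]du=\sum_{m=1}^{p-k}\tfrac{(-1)^m}{m}\binom{p}{m+k}$, which is just the inner sum again. Evaluating it (by Pascal-rule induction, as the paper does for its quantity $C_k$) gives $-\binom{p}{k}(H_p-H_k)$ where $H_k$ is the $k$-th harmonic number, so after the $k$-summation you are left with $\sum_{k=0}^p(-1)^k\binom{p}{k}^3H_k$ (the $H_p$ part dies by Dixon's theorem, $\sum_k(-1)^k\binom{p}{k}^3=0$ for odd $p$). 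This alternating \emph{cube}-of-binomials harmonic sum is the entire content of the proposition, and Chu--Vandermonde cannot touch it: Vandermonde convolution handles products of two binomial coefficients (a ${}_2F_1$ evaluation), whereas a cube is a ${}_3F_2$/Dixon-type sum, and the harmonic weight makes it a \emph{derivative} of a Dougall--Dixon identity with respect to a parameter. That is precisely how Chu and Fu evaluate it, and it is the result the paper cites. Relatedly, your promised ``beta integral with half-integer gamma arguments'' cannot materialize from $\int_0^1u^a(1-u)^b\,du$ with integer $a,b$; the double factorials in the answer come from rewriting $\frac{(6m+3)!\,(m!)^3}{6\,(3m+1)!\,((2m+1)!)^3}$ with $p=2m+1$, all integer factorials. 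So the main route, as described, does not close.

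Your fallback --- certifying the second-order recurrence $S(p)/S(p-2)=-(3p)(3p-2)(3p-4)/p^3$ by creative telescoping on the double sum, with base cases $p=1,3$ --- is the correct recurrence for the right-hand side and is a legitimate alternative strategy in principle (the summand is proper hypergeometric in $m,k,p$). But it is only named, not executed: you would need to actually produce the telescoping certificate and verify that the recurrence Zeilberger's algorithm returns is (or divides into) this one. As it stands, neither branch of the proposal constitutes a proof; the paper's route (reduce to $\sum_k(-1)^k\binom{p}{k}^3H_k$ and invoke the Chu--Fu evaluation) is the step your argument is missing.
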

{\em Proof.}
We define the $n$-th harmonic number
$$H_n=\sum_{i=1}^n \frac{1}{i},$$
and consider the sums
$$C_{k}= \sum_{m=1}^p (-1)^m \frac{1}{m} {p  \choose m+k}.$$

It is not hard to see (simply by using basic properties of binomial coefficients) that
$$C_{0}=-H_p.$$
Similarly, by using induction and the formula ${p \choose k}={p-1 \choose k}+{p-1 \choose k-1}$, we prove
the relation
$$C_{k}=-{p \choose k}(H_n-H_k), \ \ k \geq 1.$$
Putting together we obtain
\bea
&& \sum_{m = 1}^p \sum_{k = 0}^p  \frac{(-1)^{m+k}}{m} { p \choose k}^2 {p \choose m+k} \nonumber \\
&& = - \sum_{k=0}^p (-1)^k {p \choose k}^3(H_p-H_k)=0+\sum_{k=0}^p (-1)^k {p \choose k}^3 H_k \nonumber
\eea

Fortunately the last sum is a difficult, albeit known, binomial-harmonic sum computed by W. Chu and M. Fu (cf. Example 1
on page 3 in \cite{CF}). Their identity reads
$$\sum_{k=0}^{2m+1}  (-1)^k {2m+1 \choose k}^3 H_k=\frac{(-1)^{m+1}(6m+3)! (m!)^3}{6 \cdot (1+3m)!(1+2m)!^3}.$$

The rest follows easily if we let $p=2m+1$ and observe an easy identity

$$(-1)^{(p+1)/2} \frac{(3p)!!}{3(p)!!^3}=\frac{(-1)^{m+1}(6m+3)! (m!)^3}{6 \cdot (1+3m)!(1+2m)!^3}.$$

\qed

The previous identity can be also reformulated as a constant term
identity similar  with three variables, but now "disturbed"
logarithmically!

\begin{corollary} \label{log-dyson3} Let $p \geq 1$ be an odd integer. Then

$${\rm CT}_{x_1,x_2,x_3} {\rm ln}\left(1-\frac{x_2}{x_1} \right)\left(1-\frac{x_2}{x_1} \right)^p \left(1- \frac{x_3}{x_2} \right)^p \left(1-\frac{x_1}{x_3}\right)
^p=\frac{(-1)^{(p+1)/2}}{3} \frac{(3p)!!}{p!!^3}.$$
%
%$$B_p=CT_{y_1,y_2,y_3} {\rm ln}(1-y_1)(1-y_1)^p(1-y_2)^p(1-\frac{1}{y_1 y_2})
%^p.$$
\end{corollary}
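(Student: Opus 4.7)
The plan is to expand the left-hand side directly as a power series and show that the resulting constant term equals, up to trivial sign manipulations, the double sum $S = \sum_{m,k} \frac{(-1)^{m+k}}{m} {p \choose k}^2 {p \choose m+k}$ evaluated in the preceding proposition. Thus the corollary is essentially a reformulation of the proposition as a constant-term identity; no new combinatorics is needed.

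First I expand
\[
\ln(1 - x_2/x_1) = -\sum_{m \geq 1} \frac{1}{m}(x_2/x_1)^m
\]
and apply the binomial theorem to each of the three factors $(1 - x_2/x_1)^p$, $(1 - x_3/x_2)^p$, $(1 - x_1/x_3)^p$, with summation indices $i, j, k \in \{0, \ldots, p\}$ and sign factors $(-1)^i, (-1)^j, (-1)^k$. A general term contributes a monomial whose exponents in $x_1, x_2, x_3$ are $-(m+i)+k$, $(m+i)-j$, and $j-k$. Extracting the constant term forces $j = k = m+i$; since $(-1)^{i+j+k} = (-1)^{3i+2m} = (-1)^i$, the triple sum collapses to
\[
\mathrm{CT} = -\sum_{m=1}^{p}\sum_{i=0}^{p-m} \frac{(-1)^i}{m}{p \choose i}{p \choose m+i}^2.
\]

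Next I reconcile this with $S$, whose squared binomial is ${p \choose k}^2$ rather than ${p \choose m+i}^2$. The substitution $i \mapsto p-m-i$ in the inner sum, combined with ${p \choose \ell} = {p \choose p-\ell}$, gives
\[
\sum_{i=0}^{p-m}(-1)^i {p \choose i}{p \choose m+i}^2 = (-1)^{p-m}\sum_{i=0}^{p-m}(-1)^i {p \choose i}^2 {p \choose m+i}.
\]
Inserting this back and using $(-1)^{p-m} = (-1)^p(-1)^m$ together with $p$ odd yields $\mathrm{CT} = -(-1)^p S = S$. The preceding proposition then evaluates $S$ to the desired $\frac{(-1)^{(p+1)/2}}{3}\frac{(3p)!!}{p!!^3}$.

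The only delicate point is the sign matching in the final step, which requires $p$ odd (consistent with the hypothesis of the corollary). I do not foresee any serious obstacle beyond careful bookkeeping: once one recognizes that the constant-term extraction produces, after a single reindexing, exactly the double sum already computed in the previous proposition, the corollary is immediate.
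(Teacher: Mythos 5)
Your proposal is correct and follows essentially the route the paper intends: the corollary is presented there as a direct reformulation of the preceding double-sum identity, and your expansion of the logarithm and the three binomial factors, followed by the reindexing $i \mapsto p-m-i$ (valid since $p$ is odd), is exactly the bookkeeping needed to identify the constant term with the sum $\sum_{m,k}\frac{(-1)^{m+k}}{m}\binom{p}{k}^2\binom{p}{m+k}$ already evaluated. The computation checks out (e.g.\ for $p=1$ both sides equal $-1$), so nothing further is needed.
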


The previous corollary brings us to the realm of Dyson's contant term identities  \cite{An}, which state that for any positive $k \in \mathbb{N}$
and $n \geq 2$ the constant term
$${\rm CT}_{x_1,...,x_k} \frac{1}{(x_1 \cdots x_{k})^{m(k-1)}}  \prod_{1 \leq i < j \leq k} (x_i-{x_j})^{2m}$$
is equal (up to a sign) to $$\frac{(k m)!}{(m!)^k}.$$
Notice that the power of the Vandermonde factor is always even (if the power is odd the constant term
is trivially zero!). That is why any identity involving $\prod_{i<j} (x_i-x_j)^{2m+1}$  has to include additional factors.

Motivated by Corollary \ref{log-dyson3} we present a very precise conjecture in this direction.
\begin{conjecture} Let $k$ and $m$ be positive integers. Then (up to a sign)
$${\rm CT}_{x_1,...,x_{2k+1}}  \frac{1}{(x_1 \cdots x_{2k+1})^{(2m+1)k}} \prod_{i=1}^k {\rm ln}\left(1-\frac{x_{2i}}{x_{2i-1}}\right)  {\prod_{1 \leq i <j \leq 2k+1
}(x_i-x_j)^{2m+1}}$$ equals
$$\frac{((2k+1)(2m+1))!!}{(2k+1)!!(2m+1)!!^{2k+1}}.$$
\end{conjecture}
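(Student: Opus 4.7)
The plan is to replace the logarithmic factors by a parametric deformation and extract the identity by differentiation. For each $i=1,\ldots,k$, introduce a formal parameter $\alpha_i$ and set
$$I(\alpha_1,\ldots,\alpha_k)={\rm CT}_{x_1,\ldots,x_{2k+1}}\frac{\prod_{i=1}^{k}(1-x_{2i}/x_{2i-1})^{\alpha_i}\prod_{1\le i<j\le 2k+1}(x_i-x_j)^{2m+1}}{(x_1\cdots x_{2k+1})^{(2m+1)k}}.$$
Since $\ln(1-u)=\partial_{\alpha}(1-u)^{\alpha}\big|_{\alpha=0}$, the left-hand side of the conjecture equals $\partial_{\alpha_1}\cdots\partial_{\alpha_k} I(\alpha)\big|_{\alpha=0}$. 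Using $(1-x_{2i}/x_{2i-1})^{\alpha_i}(x_{2i-1}-x_{2i})^{2m+1}=x_{2i-1}^{-\alpha_i}(x_{2i-1}-x_{2i})^{2m+1+\alpha_i}$, one absorbs the deformation into the Vandermonde, recasting $I(\alpha)$ as a Dyson-type constant term with perturbed exponents $2m+1+\alpha_i$ on the $k$ pair-factors $(x_{2i-1}-x_{2i})$ together with a monomial correction $\prod_i x_{2i-1}^{-\alpha_i}$.

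A key observation is that the odd-Vandermonde antisymmetry under $x_{2i-1}\leftrightarrow x_{2i}$ forces $I(\alpha)$ to vanish whenever any $\alpha_i=0$: the deformation in pair $i$ disappears at $\alpha_i=0$ and the remaining integrand is antisymmetric in $x_{2i-1}, x_{2i}$. Hence $I(\alpha)$ factors as $\alpha_1\cdots\alpha_k\,R(\alpha)$ with $R$ analytic at the origin, and our target collapses to $R(0)$ (up to sign). The heart of the proof is to evaluate $I(\alpha)$ for generic $\alpha$ in closed form. The plan is to use an Aomoto-Forrester-style approach: derive shift/difference equations in each $\alpha_i$ from standard rational-function identities satisfied by the integrand, iterate them to reduce to a symmetric Dyson constant term at $\alpha=0$, and express $I(\alpha)$ as a product of Gamma (or Pochhammer) factors in $\alpha_i$, $m$, and $k$. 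The vanishing prefactor $\alpha_1\cdots\alpha_k$ then emerges automatically from the Gamma-product, and $R(0)$ can be simplified via the Gauss duplication formula into the conjectured double-factorial expression $((2k+1)(2m+1))!!/((2k+1)!!\,(2m+1)!!^{2k+1})$; consistency with Theorem \ref{conjm1} and the Chu-Fu evaluation serves as a $k=1$ sanity check.

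The main obstacle is that classical Dyson/Morris identities assume fully symmetric exponent data, while our deformation perturbs only $k$ of the $\binom{2k+1}{2}$ Vandermonde factors. A promising workaround is to first enlarge the deformation by attaching an independent parameter $\beta_{ij}$ to every pair, $(x_i-x_j)^{2m+1+\beta_{ij}}$, whose constant term is governed by a fully symmetric Aomoto-Selberg evaluation, and then specialize by setting $\beta_{ij}=0$ for non-log pairs (with a compensating analysis of the Gamma poles that collide in this limit). As a fallback, one can proceed combinatorially as in Theorem \ref{conjm1}: expand every Vandermonde factor via the binomial theorem and each logarithm as a power series, reducing the constant term to a multi-sum in $\binom{2k+1}{2}+k$ indices constrained by $2k+1$ linear conditions, and evaluate it using multi-variable analogues of the Chu-Fu harmonic-binomial identity. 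The latter route is verifiable on a computer for small $k$ but becomes rapidly combinatorially explosive, so it would primarily support rather than replace the Aomoto-Forrester argument.
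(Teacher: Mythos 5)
This statement is left as an open conjecture in the paper itself --- the authors offer no proof, only computer verification for small $k$ and $m$ --- so there is no ``paper proof'' to match, and the real question is whether your proposal closes the gap. It does not. The sound parts are the preliminary reductions: writing ${\rm ln}(1-u)=\partial_\alpha (1-u)^\alpha\vert_{\alpha=0}$, absorbing the deformation into the Vandermonde via $(1-x_{2i}/x_{2i-1})^{\alpha_i}(x_{2i-1}-x_{2i})^{2m+1}=x_{2i-1}^{-\alpha_i}(x_{2i-1}-x_{2i})^{2m+1+\alpha_i}$, and observing that $I(\alpha)$ vanishes when some $\alpha_i=0$ because the undeformed integrand is antisymmetric under $x_{2i-1}\leftrightarrow x_{2i}$ (the same observation the paper makes about odd powers of the Vandermonde). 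These steps are correct but they only reformulate the problem: the entire content of the conjecture is the closed-form evaluation of $I(\alpha)$ (equivalently of $R(0)$), and that evaluation is never carried out.

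The mechanism you propose for it rests on a false premise. There is no ``fully symmetric Aomoto--Selberg evaluation'' of a Dyson-type constant term in which each of the ${2k+1 \choose 2}$ factors $(x_i-x_j)$ carries an independent exponent $\beta_{ij}$; the known product formulas (Dyson, Morris, Aomoto, Forrester) require highly structured exponent data --- exponents attached to individual variables, or a single uniform pair exponent plus boundary factors --- and an arbitrary pairwise deformation admits no product evaluation to specialize from. Likewise, the Aomoto-style shift equations you invoke are derived precisely from the symmetry that your deformation destroys (only $k$ of the pairs are perturbed, and the monomial correction $\prod_i x_{2i-1}^{-\alpha_i}$ breaks the $x_{2i-1}\leftrightarrow x_{2i}$ symmetry as well), so there is no reason the recursion closes. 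The combinatorial fallback fares no better: even the $k=1$ case in the paper required the nontrivial Chu--Fu cubic harmonic--binomial identity, and no multi-variable analogue of that identity is known. So what you have is a plausible framing of the problem, not a proof; the essential analytic input is still missing, exactly as it is in the paper.
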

We verified this conjecture for small $k$ and $m$ by using computer.

\end{document}